\documentclass{amsart}
\usepackage{amsfonts,amsthm,amsmath}
\usepackage{amssymb}
\usepackage[utf8]{inputenc}
\usepackage[a4paper]{geometry}
\usepackage{enumerate}
\usepackage[usenames,dvipsnames]{color}
\usepackage{tikz}
\DeclareMathOperator{\Coeff}{Coeff} 
\DeclareMathOperator*{\Res}{Res}

\theoremstyle{plain}
	\newtheorem{thm}{Theorem}
	
	\newtheorem{lemma}[thm]{Lemma}
	\newtheorem{prop}[thm]{Proposition}
	\newtheorem{cor}[thm]{Corollary}
\theoremstyle{remark}
	\newtheorem{remark}[thm]{Remark}
	\newtheorem*{remark*}{Remark}

\theoremstyle{definition}
		

\oddsidemargin 0pt \evensidemargin 0pt \marginparsep 10pt
\topmargin 0pt \baselineskip 14pt \textwidth 6in \textheight 9in

\title{
On Poincar\'e series associated with links of normal surface singularities}

\author{Tam\'as L\'aszl\'o}
\address{Alfr\'ed R\'enyi Institute of Mathematics, Hungarian Academy of Sciences \\ 
1053 Budapest, Re\'altanoda u. 13-15,  Hungary.}
\email{laszlo.tamas@renyi.mta.hu}
\thanks{T.L. is partially supported by OTKA Grants 100796 and K112735.}

\author{Zsolt Szil\'agyi}
\address{Alfr\'ed R\'enyi Institute of Mathematics, Hungarian Academy of Sciences \\ 
1053 Budapest, Re\'altanoda u. 13-15,  Hungary.}
\email{szilagyi.zsolt@renyi.mta.hu}
\thanks{Zs.Sz. is supported by the `Lendület' program.}



\linespread{1.2}
\begin{document}

\pagestyle{myheadings} \markboth{{\normalsize T. L\'aszl\'o and Zs. Szil\'agyi}} {{\normalsize 
On Poincar\'e series associated with links of normal surface singularities
}}

\begin{abstract}
Assume that $M$  is a rational homology sphere
plumbed 3--manifold associated with a connected negative definite graph $\mathcal{T}$.
We consider the topological Poincar\'e series (\cite{NPS}) associated with $\mathcal{T}$ and its counting function, which expresses topological information, e.g. the Seiberg--Witten invariant of $M$ (\cite{NJEMS}).

In this article, we study the counting function interpreting as an alternating sum of coefficient functions associated with some Taylor 
expansions. This is motivated by a theorem of Szenes and Vergne \cite{SzV}, 
which expresses these functions in terms of Jeffrey--Kirwan residues.

Using this method, we prove several results for the counting function: uniqueness of the quasipolynomiality inside a special cone associated 
with the topology of $M$, a structure theorem which gives a formula in terms of only one- and two-variable counting 
functions indexed by the edges and the vertices of the graph. Moreover, this helps us to construct the multivariable polynomial part 
of the Poincar\'e series, which leads to a polynomial generalization of the Seiberg--Witten invariant of $M$.
Finally, we reprove and discuss surgery formulas for the counting function (\cite{NJEMS}) and for 
the Seiberg--Witten invariant of $M$ (\cite{BN}) using this framework.

\end{abstract}

\maketitle

\setcounter{tocdepth}{1}
\tableofcontents

\section{Introduction}

\subsection{} Let $M$ be a closed oriented plumbed $3$--manifold associated with a connected negative definite plumbing graph 
$\mathcal{T}$ with vertices $\mathcal{V}$. $M$ can be realized as the link of a complex normal surface singularity $(X,0)$, where 
$\mathcal{T}$ is a good dual resolution graph and it 
determines completely 
the topology of the surface $X$ at the singular point. Naturally, it has become an active research area to understand what can we 
say about (invariants of) the analytic type of $(X,0)$ from the topological invariants of $M$ (cf. Artin--Laufer program, see \cite{Nfive}, \cite{LPhd}).

A strategy to attack this question would be to study the topological analoques of the analytic invariants and develop their 
connections. 
One can think about the \emph{Seiberg--Witten invariant conjecture} of N\'emethi and Nicolaescu \cite{NN1} as an example, which targets 
the relations of the geometric genus of the possible analytic types with the Seiberg--Witten invariant of the given 
manifold $M$. 

More generally, we consider the theory of Hilbert--Poincar\'e series 
associated with the analytic and topological types of the singularity $(X,0)$. Series of articles of 
Campillo, Delgado and Gusein-Zade (cf. \cite{CDGPs,CDGEq}), and N\'emethi (cf. \cite{NPS,NCL}), studied the Poincar\'e series $P(\mathbf{t})$ 
associated with a divisorial multi-index filtration of functions on $(X,0)$. For `nice' analytic structures (e.g. rational singularities \cite{CDGPs}, splice-quotient singularities \cite{NCL}) $P(\mathbf{t})$ is equal with a topological series $Z(\mathbf{t})$, which can be 
expressed from the combinatorics of the resolution graph. Although, in general this is not the case (we refer to \cite{NPS} for examples and obstruction), it makes a 
bridge between the geometry and topology of the singularities, and one can think of $Z(\mathbf{t})$ as the 
topological counterpart of the `analytic' Poincar\'e series $P(\mathbf{t})$ associated with $(X,0)$. 
In the sequel, we provide some details on the definition and the topological information given by $Z(\mathbf{t})$.

\subsection{} We assume that $M$ is a rational homology sphere, or, equivalently $\mathcal{T}$ is a tree and all genus 
decorations in the plumbing are zero.
Let $\widetilde{X}$ be the plumbed 4--manifold associated with
$\mathcal{T}$, hence $\partial \widetilde{X} = M$. Its second
homology $L:=H_2(\widetilde{X},\mathbb{Z})$ is a lattice, freely generated by the classes of 2--spheres $\{E_v\}_{v\in\mathcal{V}}$, with a negative definite intersection form  $I=(\,,\,)$. The dual lattice is $L':=H^2(\widetilde{X},\mathbb{Z})$ generated 
by the (anti)dual classes $\{E^*_v\}_{v\in\mathcal{V}}$. The intersection form embeds $L$ into $L'$ and it extends to $L\otimes\mathbb{Q}\supset L'$. We use the notation $x^2:=(x,x)$ for any $x\in L'$. Then one has $H:=H_1(M,\mathbb{Z})\simeq L'/L$ with $|H|=\det(\mathcal{T}):=\det(-I)$ and denote the class of $x$ in $H$ by $[x]$. 

Let $K\in L'$ be the canonical class,
$\widetilde{\sigma}_{can}$ the canonical
$spin^c$--structure on $\widetilde{X}$ with $c_1(\widetilde{\sigma}_{can})=-K$,
and $\sigma_{can}\in \mathrm{Spin}^c(M)$ its restriction on $M$. Then  $\mathrm{Spin}^c(M)$ is an $H$--torsor, with action denoted by $*$. We denote by $\mathfrak{sw}_{\sigma}(M)$ the 
Seiberg--Witten invariant of $M$ indexed by the $spin^c$--structure $\sigma$ of $M$.

We define the  \emph{multivariable topological Poincar\'e series} as the 
Taylor expansion $Z(\mathbf{t})=\sum_{l'} p_{l'}\mathbf{t}^{l'}$ at the  origin of the rational function
\begin{equation*}
f(\mathbf{t})=\prod_{v\in \mathcal{V}} (1-\mathbf{t}^{E^*_v})^{\delta_v-2},
\end{equation*}
where we write
$\mathbf{t}^{l'}=\prod_{v\in \mathcal{V}}t_v^{l_v}$  for any $l'=\sum _{v\in \mathcal{V}}l_vE_v\in L'$ and $\delta_v$ is the valency of the vertex $v$.
 Notice that $Z(\mathbf{t})\in \mathbb{Z}[[L']]$, the submodule of $\mathbb{Z}[[\mathbf{t}^{\pm 1/|H|}]]$ in variables $\{t_v^{\pm 1/|H|}\}_v$.
It has a natural decomposition $Z(\mathbf{t})=\sum_{h\in H}Z_h(\mathbf{t})$, where $Z_h(\mathbf{t})=\sum_{[l']=h}p_{l'}\mathbf{t}^{l'}$. 
We can also introduce the \emph{equivariant} function 
$f_{H}(\mathbf{t})=\prod_{v\in \mathcal{V}} (1-[E^*_v]\mathbf{t}^{E^*_v})^{\delta_v-2}$ with coefficents in the group ring $\mathbb{Z}[H]$, which decomposes into $f_{H}(\mathbf{t})=\sum_{h\in H}f_h(\mathbf{t})\cdot h$ and the Taylor expansion of $f_h(\mathbf{t})$ is 
$Z_h(\mathbf{t})$ (see \ref{sec:coeff}).

For any $h\in H$ we associate a counting function 
$$Q_{h}(x)=\sum_{l'\ngeq x,\, [l']=h}p_{l'}$$
with the coefficients of $Z_{h}(\mathbf{t})$, where `$\geq$' is the partial ordering  (\ref{ss:order}) on $L'$. 
It is a finite sum since $Z(\mathbf{t})$ is supported on the \emph{Lipman cone} $\mathcal{S}':=\mathbb{Z}_{\geq0}\langle E^{*}_{v}\rangle$ (cf. (\ref{eq:finite})).

\subsection{}The first main result on the topological model is a theorem of N\'emethi \cite{NJEMS}, which shows that 
the Seiberg--Witten invariant can be expressed from 
the counting function. More precisely, if $x\in -K + \textnormal{int} (\mathcal{S}')$ then
\begin{equation}\label{eq:INTR1}
Q_{[x]}(x)=
-\frac{(K+2x)^2+|\mathcal{V}|}{8}-\mathfrak{sw}_{[-x]*\sigma_{can}}(M).\end{equation}

Notice that to guarantee the polynomiality (right hand side) of the counting function,
$x$ should sit in the affine subcone given above.
This behaviour is understood precisely in the 
work of L\'aszl\'o and N\'emethi \cite{LN}, where they 
develop an equivariant mutivariable Ehrhart theory which explains the (quasi)polynomiality of the 
counting function and it leads to the definition of the \emph{periodic constant} of multivariable series (one-variable case defined by N\'emethi and Okuma \cite{NOk,Ok}). 

By the general theory, the Lipman cone in principle is divided into different (conical) chambers, where the counting function can be realized by 
different quasipolynomials, in particular, they give different periodic constants associated with the chambers. 
However, (\ref{eq:INTR1}) `suggests' that all the quasipolynomials appearing in the Lipman cone  coincide. In particular, \cite[Corollary 5.2.1]{LN} concludes that there is a unique periodic constant associated with the Lipman cone which is the constant term of the 
right hand side of (\ref{eq:INTR1}), i.e. it equals with the Seiberg--Witten invariant. The coincidence phenomenon of the quasipolynomials is 
somewhat hidden in the structure of the function $f(\mathbf{t})$. 

\subsection{}The main motivation of the present article is to propose a `different' interpretation of the counting function, which shows the uniqueness of the 
quasipolynomiality inside the Lipman cone. 
In Section \ref{sec:coeff}, we represent the counting function in the following form  
\begin{equation}\label{eq:q1} 
Q_{h}(x) = 
\sum_{\emptyset \neq \mathcal{I} \subseteq \mathcal{V}} (-1)^{|\mathcal{I}|-1} 
\Coeff \Big( \textnormal{T}\bigg[ \mathbf{t}_{\mathcal{I}}^{-r_{h-[x]}} f_h(\mathbf{t}_{\mathcal{I}})\cdot 
\prod_{i\in \mathcal{I}} \frac{\mathbf{t}_{\mathcal{I}}^{E_{i}}}{1 - \mathbf{t}_{\mathcal{I}}^{E_{i}}}\bigg],\,
\mathbf{t}_{\mathcal{I}}^{x} \Big),
\end{equation}
where $r_{h-[x]} \in L'\cap \sum_{v\in \mathcal{V}}[0,1)E_{v}$ is the unique lift of $h-[x]$, and  
$\Coeff( \textnormal{T}[R(\mathbf{t}_{\mathcal{I}})],\mathbf{t}_{\mathcal{I}}^{x})$ is the coefficient of $\mathbf{t}_{\mathcal{I}}^{x}$ 
in the Taylor expansion of the rational function $R$ with variable $\mathbf{t}_{\mathcal{I}}^{l'}:=\prod_{v\in\mathcal{I}}t_v^{l_v'}$ 
for any $l'=\sum_{v\in \mathcal{V}}l_v' E_v \in L'$. 

Using this interpretation, we connect the counting function with Jeffrey--Kirwan residues (see Section \ref{sec:SzV}) 
by a theorem of Szenes and Vergne \cite{SzV}. This result concludes that the coefficient functions from (\ref{eq:q1}) are equal with 
sums of Jeffrey--Kirwan residues and they are quasipolynomials inside the chambers of the Lipman cone. Again, in principle they can be different, but the special structures of the `projections' $f_h(\mathbf{t}_{\mathcal{I}})$ for 
$\mathcal{I}\subseteq \mathcal{V}$ (Lemma \ref{Lm-5}) forces these quasipolynomials to coincide. In Section \ref{ss:unique} we present 
the proof of this uniqueness of the quasipolynomials.
In particular, this is a quadratic polynomial on a 
sufficiently sparse sublattice of $L'$, which confirms the form of Equation (\ref{eq:INTR1}).

\subsection{}Several expressions for the interpretation and computation of the Seiberg--Witten invariant were established in the last years: surgery formula \cite{BN}, Turaev torsion normalized by Casson--Walker invariants \cite{Nic04}, etc. (see Section \ref{ss:sw} for more details). 
We emphasize that the connection with Jeffrey--Kirwan residues gives an explicit formula for the computation of the quasipolynomials, in particular, for the Seiberg--Witten invariant. Nevertheless, the explicit computation 
can be complicated in general (see Section \ref{sec:1dim} for the one dimensional case). 

Another interesting combinatorial interpretation is proposed in \cite{BN} and \cite{LN}. Notice that the number of variables 
of $f_h(\mathbf{t})$ is the number of vertices of the graph, which may increase considerably for `simple' (e.g. lens spaces, Seifert 3-manifolds) cases as well. However, \cite[Theorem 5.4.2]{LN} proved that from Seiberg--Witten invariant point of view, the number of variables can be reduced to the number of nodes of the graph. Hence, we look at the \emph{reduced} function $f_h(\mathbf{t}_{\mathcal{N}}):=f_h(\mathbf{t})\mid_{t_v=1,v\notin \mathcal{N}}$ for any $h\in H$. For graphs with at most two nodes, \cite{BN} and \cite{LN} proved a decomposition 
$$f_h(\mathbf{t}_{\mathcal{N}})=P_h(\mathbf{t}_{\mathcal{N}})+ f^{-}_h(\mathbf{t}_{\mathcal{N}}),$$
where $P_h(\mathbf{t}_{\mathcal{N}})$ is a Laurent polynomial and $P_h(\mathbf{1})$ gives the Seiberg--Witten invariant associated with $h\in H$. This polynomial is constructed by a division algorithm controlled by the vanishing of the periodic constant. 

In the present article we use the aforementioned interpretation to prove the existence and uniqueness of this polynomial for graphs with any number of nodes. First of all, the result of Section \ref{s:strcf} gives a formula for the counting function in terms of one- and two-variable counting functions associated with vertices and edges of $\mathcal{T}$. This can be done for the reduced counting function as well (cf. Section \ref{s:strcforb}). This is the main idea of the algorithm giving the polynomial in general, since it reduces to one- and two-variable cases.  In Section \ref{s:polpartsec} we reprove the one- and two-variable cases in a slightly more general setting than in \cite{BN} and \cite{LN}, and we deduce the multivariable case in general. Finally, Section \ref{sec:example} illustrates the algorithm with an example of a graph having three nodes. 

We would like to emphasize that this result has twofold importance: on the one hand, it defines a `\emph{polynomial invariant}' which generalizes the Seiberg--Witten invariant for negative definite plumbed 3-manifolds. On the other hand, it gives an explicit algorithm to compute it in terms of the graph $\mathcal{T}$.

\subsection{}In the last part, we are interested to discuss surgery formulas appearing in this context, using the new approach given by 
the present article. 
In \cite[Theorem 3.2.13]{NJEMS} one can find a recursion formula for the quasipolynomial associated with the counting function. 
The essence of Section \ref{sec:surgform} is to show how to use the methods of Section \ref{sec:resunique} in order to deduce the 
recursion, first for the counting function, then for the quasipolynomial associated with it. In particular, 
we discuss how the recursion behaves on the level of periodic constant and we compare with the Braun--N\'emethi \cite{BN} surgery formula for the 
Seiberg--Witten invariant. The present proof of the recurrence is based on a decomposition of the sum from $(\ref{eq:q1})$ in terms of the graph $\mathcal{T}$, 
and on a projection property (Lemma \ref{Lm-Pr}) of the coefficient functions.

\subsection{Acknowledgement}
The authors are grateful to Andr\'as N\'emethi for the motivating conversations. The second author is thankful for the hospitality of the Alfr\'ed R\'enyi Institute of Mathematics, Hungarian Academy of Sciences.

\section{Preliminaries}

\subsection{Links of normal surface singularities}\label{ss:link}
\subsubsection{}
We consider a connected negative definite plumbing graph $\mathcal{T}$ with vertices $\mathcal{V}$. By plumbing disk bundles along $\mathcal{T}$, we obtain a smooth 4--manifold $\widetilde{X}$ whose boundary is an oriented 
plumbed 3--manifold $M$. The graph $\mathcal{T}$ can be realized as the dual graph of a good resolution $\pi:\widetilde{X}\to X$ of some complex normal surface singularity $(X,0)$ 
and $M$ is called the link of the singularity. In this article we assume that $M$ is a {\it rational homology sphere} which is equivalent to the fact 
that $\mathcal{T}$ is a tree and all the genus decorations are zero.

Then $L:=H_2 (\widetilde{X},\mathbb{Z} )$ is a lattice, freely
generated by the classes of the irreducible exceptional curves
$\{E_v\}_{v\in\mathcal{V}}$ (or the cores of the plumbing construction), with 
a nondegenerate negative definite intersection form $I:=[(E_v,E_w)]_{v,w}$. 

If  $L'$ denotes
$H^2( \widetilde{X},\mathbb{Z})\simeq Hom(L,\mathbb{Z})$, then the intersection form provides an embedding $L \hookrightarrow L'$ with finite factor $H:=L'/L \simeq H^2(\partial \widetilde{X},\mathbb{Z})\simeq H_{1}(M, \mathbb{Z})$ and it extends to
$L'$ (since $L'\subset L\otimes \mathbb{Q}$).
Hence $L'$ is the dual lattice, freely generated by the (anti)duals $\{E_v^*\}_{v\in \mathcal{V}}$, where we
prefer the sign convention $( E_v^*, E_w) =-\delta_{vw}$ (the negative of the Kronecker symbol).  The class of $l'\in L'$ in $H$ is denoted by $[l']$. One can identify $H$ with its Pontrjagin dual $\widehat H$ by the isomorphism $[l']\mapsto e^{2\pi i (l',\cdot)}$.

We use notation $A$ for the positive definite matrix $-I$. In particular, $A_{vv} = -(E_{v},E_{v})>0$ for all $v\in \mathcal{V}$, and for $v\neq w$ we have $A_{vw}=-1$ if $\overline{vw}$ is an edge of $\mathcal{T}$ (since $\pi:\widetilde{X} \to X$ is a good resolution), otherwise $A_{vw}=0$. Having this special shape, we refer to $A$ as positive definite matrix associated with $\mathcal{T}$.
Moreover, $(A^{-1})_{vw} = -(E^{*}_{v}, E^{*}_{w})$ are the entries of the vectors $E^{*}_{v}$ in the basis $\{E_{v}\}_{v\in \mathcal{V}}$, and all of them are positive (cf. \cite[page 83 and \textsection 20]{EN}). We emphasize the relation 
\begin{equation}\label{eq:rel}
[E^*_v]_{v\in\mathcal{V}}\cdot A=[E_v]_{v\in\mathcal{V}},
\end{equation}
which will be used frequently in the sequel.


\subsubsection{}\label{ss:order}
We have the following partial ordering on $L\otimes \mathbb{Q}$: for any $l_1,l_2$ one writes $l_1\geq l_2$ if
$l_1-l_2=\sum_{v\in \mathcal{V}} \ell_v E_v$ with all $\ell_v\geq 0$. Denote by $\mathcal{S}'$  
the Lipman cone $\{l'\in L'\,:\, (l',E_v)\leq 0 \ \mbox{for all
$v$}\}$ which is generated over $\mathbb{Z}_{\geq 0}$ by the
elements $E_v^*$. We write $\mathcal{S}'_{\mathbb{R}} := \mathcal{S}'\otimes \mathbb{R}$ for the real Lipman cone.
Since  all the entries of $E_v^*$ are strictly positive, for any fixed $x\in L'$ the set 
\begin{equation}\label{eq:finite}
\{l'\in \mathcal{S}'\,:\, l'\ngeq x\} \ \ \mbox{is finite}.
\end{equation}

\subsubsection{}\label{ss:distreps}
For any class $h\in H=L'/L$ we consider two \emph{distinguished representatives}:
the unique element $r_h\in L'$ characterized by $r_h\in \sum_{v}[0,1)E_v$ and $[r_h]=h$, and the unique minimal element $s_h$ of 
$\{l'\in L'\,:\,[l']=h\}\cap \mathcal{S}'$ (cf. \cite[5.4]{NOSZ}). One has  $s_h\geq r_h$, and 
in general $s_h\neq r_h$ (for an example see \cite[4.5]{trieste}).
Both representatives appear from different perspectives in the `normalization' term of the Seiberg--Witten invariants (e.g. \cite{BN} and \cite{NOSZ}).
In Section \ref{ss:pcrec}, we will discuss their behaviour under certain projections. 

\subsubsection{}
For any $\mathcal{J} \subseteq \mathcal{V}$ let $\delta_{v,\mathcal{J}}$ be the valency of the vertex $v$ in the complete subgraph $\mathcal{T}_{\mathcal{J}}$  of $\mathcal{T}$ with vertices $\mathcal{J}$. We use shorter notation for $\delta_{v} := \delta_{v,\mathcal{V}}$. We distinguish the following subsets of vertices: the set of \emph{nodes} $\mathcal{N}:=\{v\in \mathcal{V}:\delta_v \geq 3\}$
and the set of \emph{ends} $\mathcal{E}=\{v\in \mathcal{V}:\delta_v= 1\}$.

We consider the \emph{orbifold graph} $\mathcal{T}^{orb}$ constructed from $\mathcal{T}$ as follows: the vertices are the nodes of $\mathcal{T}$ and two vertices are connected by an edge if the corresponding nodes in $\mathcal{T}$ are connected by a path which, apart from the two nodes, consists only vertices with valency 2. One can decorate the vertices and edges of $\mathcal{T}^{orb}$ and define a nonintegral positive definite matrix $A^{orb}$ by \cite[Lemma 4.14]{BNnewt}. Since we are interested rather in the shape of the graph, we omit the precise definitions here. For more information one can consult with e.g. \cite{BNnewt,LN}.

\vspace{0.2cm}
For more details on the links and resolution graphs of normal surface singularities see e.g. \cite{Nfive,NOSZ,trieste}.

\subsubsection{Spin$^c$--structures}
Let $\widetilde{\sigma}_{can}$ be the {\it canonical $spin^c$--structure} on $\widetilde{X}$.  Its
first Chern class $c_1( \widetilde{\sigma}_{can})=-K\in L'$, where $K$ is the canonical element in $L'$ defined by the
{\it adjunction formulas}
\begin{equation}\label{adjunction-formulae}
(K+E_v,E_v)+2=0 \ \ \ \textnormal{ for all $v\in\mathcal{V}$}
\end{equation}
(cf. \cite[p.\,415]{GS}).
The set of $spin^c$--structures $\mathrm{Spin}^c(\widetilde{X})$ of $\widetilde{X}$ is an $L'$--torsor: if we denote
the $L'$--action by $l'*\widetilde{\sigma}$, then $c_1(l'*\widetilde{\sigma})=c_1(\widetilde{\sigma})+2l'$.
Furthermore,  all the $spin^c$--structures of $M$ are obtained by restrictions from $\widetilde{X}$.
$\mathrm{Spin}^c(M)$ is an $H$--torsor, compatible with the restriction and the projection $L'\to H$.
The {\it canonical $spin^c$--structure} $\sigma_{can}$ of $M$ is the restriction 
of the canonical $spin^c$--structure $\widetilde{\sigma}_{can}$ of $\widetilde{X}$.

\subsection{Topological Poincar\'e series, Seiberg--Witten invariants and generalizations}

\subsubsection{Poincar\'e series: definitions and motivations}\label{ss:tps}
For any $l' = \sum_{v\in \mathcal{V}} l_{v}E_{v}\in L' $ we set $\mathbf{t}^{l'} = \prod_{v\in \mathcal{V}} t_{v}^{l_{v}}$ and let $\mathbb{Z}[[L']]$ be the $\mathbb{Z}[L']$-submodule of formal power series $\mathbb{Z}[[t_{v}^{\pm 1/|H|}:v\in \mathcal{V}]]$ consisting of series $\sum_{l'\in L'}a_{l'}\mathbf{t}^{l'}$, with $a_{l'}\in \mathbb{Z}$ for all $l'\in L'$. The \emph{support} of such a series is $\{l'\in L' : a_{l'}\neq 0\}\subset L'$.
We consider the rational function 
\begin{equation}\label{eq:Zdef}
f(\mathbf{t}) = \prod_{v\in \mathcal{V}}(1 - \mathbf{t}^{E_v^*})^{\delta_v-2}.
\end{equation}
Then the {\em topological Poincar\'e series} 
$Z(\mathbf{t})=\sum_{l'}p_{l'} \mathbf{t}^{l'} \in \mathbb{Z}[[L']]$ is the multivariable Taylor expansion at the origin of $f(\mathbf{t})$, and it decomposes uniquely into $Z(\mathbf{t})=\sum_{h\in H} Z_{h}(\mathbf{t})$, where $Z_{h}({\mathbf{t}})=\sum_{[l']=h} p_{l'} \mathbf{t}^{l'}$.
By (\ref{ss:order}), $Z(\mathbf{t})$ is supported in $\mathcal{S}'$, hence $Z_{h}(\mathbf{t})$ is supported in $(l'+L)\cap \mathcal{S}'$, where $l'\in L'$ with $[l']=h$.


This series was introduced by the work of N\'emethi \cite{NPS}, motivated by the following fact: we may consider the equivariant divisorial Hilbert series $\mathcal{H}(\mathbf{t})$ of
a normal surface singularity $(X,0)$ with fixed resolution graph $\mathcal{T}$.
The key point connecting $\mathcal{H}(\mathbf{t})$ with the topology of the link $M$ (or the graph $\mathcal{T}$) is introducing the series $\mathcal{P}(\mathbf{t})=
-\mathcal{H}( \mathbf{t}) \cdot \prod_{v\in \mathcal{V}}(1 - t_v^{-1})\in \mathbb{Z}[[L']]$. 
Then $Z(\mathbf{t})$ is the 
`topological candidate' for $\mathcal{P}(\mathbf{t})$. They agree for several singularities, e.g. for splice quotients (see \cite{NCL}), which
contain all the rational, minimally elliptic or weighted homogeneous singularities.
More details regarding this analytic motivation can be found in \cite{CDGPs,CDGEq,NPS,NCL}.

\subsubsection{Seiberg-Witten invariants and counting functions}\label{ss:sw}
For a closed oriented $3$-manifold $M$ one can associate the Seiberg--Witten invariant $\mathfrak{sw}(M):\mathrm{Spin}^c(M)\to \mathbb{Q}$, 
$\sigma\mapsto \mathfrak{sw}_\sigma$ (cf. \cite{Lim, Nic04}).
In the last years several combinatorial expressions were established regarding the Seiberg--Witten invariants, since it is difficult to compute using its very definition.
For rational homology spheres, one direction was opened by the result of Nicolaescu \cite{Nic04} showing that $\mathfrak{sw}(M)$ is 
equal with the Reidemeister--Turaev torsion normalized by the Casson--Walker invariant. In the case when $M$ is a negative 
definite plumbed rational homology sphere, combinatorial formula for Casson--Walker invariant in terms of the plumbing graph can be found in Lescop 
\cite{Lescop}. The Reidemeister--Turaev torsion is determined by N\'emethi and Nicolaescu \cite{NN1} using Dedekind--Fourier sums, which 
is still hard to determine in most of the cases.

In this article, we study a different direction which is a combinatorial interpretation of $\mathfrak{sw}(M)$ from the Poincar\'e series  $Z(\mathbf{t})$, using qualitative properties 
of the coefficients. For any $h\in H$ we define the {\it counting function} of the coefficients of $Z_{h}(\mathbf{t})=\sum_{[l']=h}p_{l'} 
\mathbf{t}^{l'}$ by 
$$x\mapsto Q_{h}(x):=\sum_{l'\not\geq x,\, [l']=h} \, p_{l'}.$$ 
The above sum is finite 
by (\ref{eq:finite}), moreover we have the following 
theorem.

\begin{thm}[N\'emethi \cite{NJEMS}]\label{th:JEMS} 
Fix some $x\in -K+ \textnormal{int}(\mathcal{S}')$. Then
\begin{equation}\label{eq:SUM} Q_{[x]}(x)=
-\frac{(K+2x)^2+|\mathcal{V}|}{8}-\mathfrak{sw}_{[-x]*\sigma_{can}}(M).
\end{equation}
\end{thm}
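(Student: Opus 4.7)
The plan is to combine the residue interpretation of $Q_{[x]}(x)$ given by (\ref{eq:q1}) and the Szenes-Vergne theorem with the known combinatorial description of $\mathfrak{sw}(M)$ via Reidemeister-Turaev torsion. The three stages are: establish quasipolynomiality on $-K+\textnormal{int}(\mathcal{S}')$, compute the polynomial part, and identify the constant term as the Seiberg-Witten invariant.

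First I would apply the representation (\ref{eq:q1}) and the Szenes-Vergne theorem to rewrite each $\Coeff(\textnormal{T}[\cdots],\mathbf{t}_{\mathcal{I}}^{x})$ as a sum of Jeffrey-Kirwan residues. This exhibits $Q_{[x]}(x)$ as a quasipolynomial in $x$ on each chamber of the Lipman cone. The affine subcone $-K+\textnormal{int}(\mathcal{S}')$ lies deep enough that a single chamber governs the whole alternating sum, and the shared structure of the projections $f_{h}(\mathbf{t}_{\mathcal{I}})$ should force the quasipolynomials coming from different subsets $\mathcal{I}\subseteq\mathcal{V}$ to coincide. A top-degree residue computation, using $(A^{-1})_{vw}=-(E_{v}^{*},E_{w}^{*})$ together with the adjunction formula $(K+E_{v},E_{v})=-2$, then identifies the polynomial part with $-\frac{(K+2x)^{2}+|\mathcal{V}|}{8}$; the coefficient $\tfrac{1}{8}$ arises naturally from the quadratic form determined by the intersection matrix and the $|\mathcal{V}|$-term from the anomaly in pairing $K$ with itself along $\delta_{v}-2$ factors.

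To identify the constant (periodic) term of the quasipolynomial as $-\mathfrak{sw}_{[-x]*\sigma_{can}}(M)$, I would invoke Nicolaescu's theorem expressing $\mathfrak{sw}(M)$ via the Reidemeister-Turaev torsion normalized by the Casson-Walker invariant, together with Lescop's graph-theoretic formula for Casson-Walker and the N\'emethi-Nicolaescu Dedekind-Fourier expression of the torsion. The Casson-Walker contribution merges into the polynomial part already computed, while the Dedekind-Fourier sum is to be matched term-by-term with the periodic part of the Jeffrey-Kirwan residue through the character identification $[l']\mapsto e^{2\pi i(l',\cdot)}$ of $H$ with $\widehat{H}$.

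I expect the main obstacle to be precisely this last matching: reconciling the Fourier side of the torsion formula with the residue side of the Szenes-Vergne expression requires delicate bookkeeping of characters of $H$ in each $\mathcal{I}$-summand and controlling how the lifts $r_{h-[x]}$ interact with the residue chambers. A more inductive alternative, avoiding explicit Dedekind sums, is to run a surgery argument: removing an end of $\mathcal{T}$ produces a smaller plumbing for which both $Q_{[\cdot]}$ and $\mathfrak{sw}$ transform in a controlled manner (on the topological side through the surgery formula to be established in Section \ref{sec:surgform}, and on the Seiberg-Witten side through the Braun-N\'emethi formula of \cite{BN}). This reduces the identification to the base case of a lens space, where both sides can be evaluated directly and the match becomes an elementary Dedekind sum identity.
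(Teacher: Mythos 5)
You should first be aware that the paper does not prove this theorem: it is quoted from N\'emethi \cite{NJEMS} (with proof deferred to that reference) and serves as input and motivation for the paper's actual results, so there is no internal proof to compare against. Judged on its own terms, the architecture of your proposal --- quasipolynomiality via (\ref{eq:q1}) and Szenes--Vergne, then identification of the quadratic part, then identification of the constant term with $-\mathfrak{sw}$ --- has the right shape, and your first stage is essentially what Sections \ref{sec:resunique}--\ref{sec:unique} of the paper carry out (Theorem \ref{Thm-A}). One caveat already there: the uniqueness statement is proved on $\sum_v(\delta_v-2)E_v^*+\textnormal{int}(\mathcal{S}')=-K-\sum_vE_v+\textnormal{int}(\mathcal{S}')$, which does not contain $-K+\textnormal{int}(\mathcal{S}')$ in general (e.g.\ when some vertex has $\delta_v>-E_v^2$), so "lies deep enough that a single chamber governs" is an assertion that needs its own argument.

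The genuine gaps are in your stages two and three, and they are not bookkeeping --- they are the content of the theorem. A "top-degree residue computation" can at best produce the leading quadratic form of the quasipolynomial; it cannot by itself determine the linear term or the normalization, and the claim that the polynomial equals exactly $-\frac{(K+2x)^2+|\mathcal{V}|}{8}$ plus a constant (equivalently $\chi(x)=-(x,x+K)/2$ plus a constant) is precisely what must be proved; in \cite{NJEMS} this is done via a duality property of $Z(\mathbf{t})$ and the Riemann--Roch function rather than by evaluating Jeffrey--Kirwan residues, and you supply no mechanism. Likewise, "matched term-by-term with the periodic part" restates, rather than performs, the identification of the constant with the Reidemeister--Turaev torsion, which is the hard core of \cite{NOSZ,NJEMS}. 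Your surgery alternative is the more realistic route and is consistent with Section \ref{sec:surgform} together with Theorem \ref{thm:BN} (whose proof in \cite{BN} is independent of the present theorem, so there is no circularity), but two things must still be done: the periodic-constant recursion is sensitive to the choice of representative ($r_h$ versus $s_h$, cf.\ Section \ref{ss:pcrec}), so the induction has to be run with the correct normalization at every step; and the lens-space base case is itself a nontrivial Dedekind-sum identity that must be proved, not merely declared elementary. As it stands the proposal is a plausible road map, not a proof.
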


If we write $x=l+r_{h}$ with $l\in L$ and $h=[x]$ then the right hand side of (\ref{eq:SUM}) can be seen as a multivariable quadratic polynomial on $L$ with constant term
$$
-\frac{(K+2r_h)^2+|\mathcal{V}|}{8}-\mathfrak{sw}_{-h*\sigma_{can}}(M),
$$
which we call the {\em $r_{h}$-normalized Seiberg--Witten invariant} of $M$ associated with $h\in H$. 
Similarly, one can also define the {\em $s_{h}$-normalized Seiberg--Witten invariant} by writing $x=l+s_{h}$ for some $l\in L$ and $h=[x]$, which appears in different contexts. 

\begin{remark}
It is important to emphasize that the identity (\ref{eq:SUM}) was motivated by a similar analytic identity which 
expresses the geometric genus of the complex normal surface singularity $(X,0)$ from the series $P(\mathbf{t})$ (cf. \cite{NCL,Ok}).
\end{remark}

\subsubsection{Multivariable periodic constants}

The article \cite{LN} developes a theory which gives better understanding for the Equation (\ref{eq:SUM}) and shows how to recover the needed information from the series (or rational function) in order to obtain closed formula or algorithm for the Seiberg--Witten invariant, at least for special cases.

One can construct a (conical) chamber decomposition of the space $L'\otimes\mathbb{R}$. Once we fix a chamber, the counting function is represented 
by a multivariable Ehrhart type quasipolynomial (coming from counting lattice points in some special polytopes attached to $f(\mathbf{t})$) inside 
the chamber. The constant term of the quasipolynomial is understood by the notion of \emph{multivariable periodic constant} associated with $Z(\mathbf{t})$, which interprets the Seiberg--Witten invariant according to (\ref{eq:SUM}). More details about the constructions can be found in \cite[Sections 3 and 4]{LN}.

In the sequel we briefly recall the main ideas from the case of one-variable series introduced by N\'emethi and Okuma in 
\cite{NOk,Ok}, then we discuss its generalization for the multivariable case specialized to $Z(\mathbf{t})$. 

Let $S(t)=\sum_{l\geq0}c_l t^l \in 
\mathbb{Z}[[t]]$ be a formal power series with one variable and assume that for some $p\in \mathbb{Z}_{>0}$ the counting function 
$Q_p(n):=\sum_{l=0}^{pn-1}c_l$ is a polynomial in $n$. Then the constant term $Q_p(0)$ is independent of $p$ and it is called the \emph{periodic 
constant} $\mathrm{pc}(S)$ of the series $S$. The intuitive meaning of the periodic constant is shown by the following example: 
assume $S(t)$ is the Hilbert series associated with a graded
algebra/vector space $A=\oplus_{l\geq 0}A_l$ (i.e. $c_l=\dim
A_l$) and the series $S$ admits a Hilbert quasipolynomial $Q(l)$ (that is, $c_l=Q(l)$ for $l\gg 0$). 
Then the periodic constant of the `regularized series' $S_{reg}(t):=\sum_lQ(l)t^l$ is zero. Hence the periodic constant of $S(t)$ 
measures exactly the difference between $S(t)$ and $S_{reg}(t)$, that is 
$\mathrm{pc}(S)=(S-S_{reg})(1)$ collecting all the anomalies of the starting elements of $S$.

For the multivariable case we consider the settings of Sections \ref{ss:link} and \ref{ss:tps}. Assume there is a real cone 
$\mathcal{K}\subset L'\otimes\mathbb{R}$ whose affine closure has positive dimension, a 
sublattice $\widetilde L \subset L'$ of finite index and $l'_* \in \mathcal{K}$ such that $Q_h(l')$ is a quasipolynomial for any 
$\widetilde L\cap(l'_* +\mathcal{K})$. Then we say that $Z(\mathbf{t})$ admits a {\em multivariable periodic constant} 
associated with $\mathcal{K}$ and it is defined by
\begin{equation}\label{eq:PCDEF}
\mathrm{pc}^{\mathcal{K}}_h(Z)
:= 
Q_h(0) 
\qquad \textnormal{ for any $h\in H$}.
\end{equation}

\begin{remark}
 The definition does not depend on the choice of the sublattice $\widetilde L$, which corresponds to the choice of 
 $p$ in the one-variable case.
 This is responsible for the name `periodic' in the definition. We will often use notation $\mathrm{pc}^{\mathcal{K}}(Z_h)$ as well.
\end{remark}

Notice that the multivariable periodic constant exists for any chamber $\mathcal{C}$ given by the decomposition of $L'\otimes\mathbb{R}$. However, the chamber decomposition is too fine in the sense that the Lipman cone can be divided into several chambers in general. Hence, different quasipolynomials are attached, giving different periodic constants. Although, Theorem  \ref{th:JEMS} from \cite{NJEMS} suggests that they are the same. A proof of this uniqueness question, using the methods of the present article, is given in Section \ref{sec:unique}.

\subsubsection{Polynomial part of rational functions}
In the one-variable case, we assume that $f(t)$ is a rational function of the form $B(t)/A(t)$ with $A(t)=\prod_i(1-t^{a_i})$ and $a_i>0$. 
Then by \cite[7.0.2]{BN} one has a unique decomposition $f(t)=P(t)+\frac{D(t)}{A(t)}$, where $P(t)$ and $D(t)$ are polynomials and $D(t)/A(t)$ has negative degree with vanishing periodic constant.  Hence, $\mathrm{pc}(S)$ equals $P(1)$. 
This decomposition was generalized in 
\cite[Section 4.5]{LN} for two-variable functions of the form $f(\mathbf{t})=B(\mathbf{t})/\prod_{i=1}^{2}(1-\mathbf{t}^{a_i})$ with all the coordinates of $a_i$ are strict positive. E.g. if we choose the chamber $\mathcal{C}:=\mathbb{R}_{\geq 0}\langle a_1,a_2\rangle$ one has 
$$f(\mathbf{t})=P^{\mathcal{C}}(\mathbf{t})+f^{-,\mathcal{C}}(\mathbf{t})$$ 
with $\mathrm{pc}^{\mathcal{C}}(f^{-,\mathcal{C}})=0$, hence $\mathrm{pc}^{\mathcal{C}}(f)=P^{\mathcal{C}}(\mathbf{1})$. 

For more variables, the 
authors know no similar decompositions in general. Their experience is that the geometry of the (Ehrhart) quasipolynomials associated with the counting function by the theory of \cite{LN} cannot be controlled since the chamber decomposition can be cumbersome. However, the counting function of rational functions defined in (\ref{eq:Zdef}) have very special structure in terms of the graph which makes the decomposition possible. See Section \ref{s:strcf} and Section  \ref{s:polpartsec} for further motivation and details. 

\subsubsection{Reduced Poincar\'e series}\label{ss:rps}
The number of variables of $Z(\mathbf{t})$ is the number of vertices $|\mathcal{V}|$ of the graph $\mathcal{T}$ which implies that the quasipolynomials associated with the counting function have $|\mathcal{V}|$ many variables too. Therefore, the `complexity' can be different even in the case when $\mathcal{T}$ has no nodes at all, that is when $M$ is a lens space. Nevertheless, \cite[Theorem 5.4.2]{LN} says that, from Seiberg--Witten invariant 
(or periodic constant) point of view, the complexity is measured by the number of nodes, that is the number of variables can be reduced to $|\mathcal{N}|$.

For any $h\in H$, we define the \emph{reduced rational function} by 
$$f_h(\mathbf{t}_{\mathcal{N}}):=f_h(\mathbf{t})\mid_{t_v=1,v\notin \mathcal{N}}$$ and its Taylor expansion $Z_h(\mathbf{t}_{\mathcal{N}})$ is called the \emph{reduced Poincar\'e series}. Then there exists the periodic constant of $Z_h(\mathbf{t}_{\mathcal{N}})$ associated with the projected real Lipman cone  
$\pi_{\mathcal{N}}(S'_{\mathbb{R}})$ where $\pi_{\mathcal{N}}:\mathbb{R}\langle E_v\rangle_{v\in\mathcal{V}}\to \mathbb{R}\langle E_v\rangle_{v\in\mathcal{N}}$ is the projection, and 
$$\mathrm{pc}^{\pi_{\mathcal{N}}(S'_{\mathbb{R}})}(Z_h(\mathbf{t}_{\mathcal{N}}))=\mathrm{pc}^{S'_{\mathbb{R}}}(Z_h(\mathbf{t}))=
-\mathfrak{sw}_{-h*\sigma_{can}}(M)-\frac{(K+2r_h)^2+|\mathcal{V}|}{8}.$$
\begin{remark}
 Notice that before the elimination of variables, we have to decompose the series $Z(\mathbf{t})$ into $\sum_{h\in H}Z_h(\mathbf{t})$ in order to preserve the information about the $H$ invariants.
\end{remark}
 
We would like to highlight that not only it is `easier' to work with reduced series, but for special classes of singularities it can be compared with other series (or invariants) describing the analytic or topological types, see \cite{NPS} for examples. A more stronger reduction is given by lattice cohomological methods in \cite{LNRed}.

Therefore, in this article, the polynomial generalization of the Seiberg--Witten invariant will be only constructed from the reduced rational function, giving a polynomial with  $|\mathcal{N}|$ many variables.

\section{Residues and the counting function}\label{sec:resunique}

\subsection{Counting function as coefficients of Taylor expansions}\label{sec:coeff}%
We express the counting function $Q_{h}$ as alternating sum of coefficient functions of Taylor expansions. Using this presentation one can associate quasipolynomials with the counting function in a standard manner.

We also introduce the equivariant rational function 
$$
f_{H}(\mathbf{t}) = \prod_{v\in \mathcal{V}} (1 - h_{v} \mathbf{t} ^{E^{*}_{v}})^{\delta_{v}-2},
$$ 
where $h_{v}=[E^{*}_{v}]$ denotes the class of $E^{*}_{v}$ in $H=L'/L$.
It is the equivariant version of $f(\mathbf{t})$ with coefficients in the group ring $\mathbb{Z}[H]$. It can be decomposed into equivariant parts $f_{H}(\mathbf{t}) = \sum_{h\in H} f_{h}(\mathbf{t}) \cdot h$ with $f_{h}(\mathbf{t})$ rational of form $\frac{\sum_{l'\in h+L} b_{l'}\mathbf{t}^{l'}}{\prod_{u\in \mathcal{E}}1-\mathbf{t}^{|H|E^{*}_{u}}}$ (with $l'\in \mathcal{S}'$ when $b_{l'}\neq 0$) using expansion of type 
\begin{equation}\label{Eq-49}
\frac{1}{1-h \mathbf{t}^{x}} = \frac{\sum_{k=0}^{|H|-1} h^{k}\mathbf{t}^{kx}}{1- \mathbf{t}^{|H|x}},  \qquad\qquad  h\in H,\ x\in L'. 
\end{equation}
The $h$-equivariant part can also be extracted via Fourier transform
$$
f_{h}(\mathbf{t}) = \frac{1}{|H|} \sum_{\rho\in \widehat{H}} \rho\left( h^{-1} f_{H}(\mathbf{t})\right),
$$
where $\widehat{H}$ is the Pontryagin dual of $H$.
 Using notations of Section \ref{ss:tps}, the Taylor expansion of $f_{H}(\mathbf{t})$ at the origin is
$
\textnormal{T}[f_{H}(\mathbf{t})] = \sum_{l'\in L'} p_{l'}[l']t^{l'} = \sum_{h\in H} Z_{h}(\mathbf{t}) \cdot h.
$
In particular, the Taylor expansion of $f_{h}(\mathbf{t})$ at the origin is  $\textnormal{T}[f_{h}(\mathbf{t})] = Z_{h}(\mathbf{t}) = \sum_{[l']=h}p_{l'}\mathbf{t}^{l'}$, hence $Q_{h}$ is the counting function associated with the Taylor series $\textnormal{T}[f_{h}(\mathbf{t})]$.

The counting function $Q_{h}$ can be presented in terms of coefficient functions of Taylor expansions as follows. It can be written as
$
Q_{h}(x) = \sum_{\substack{l'\not \geq x,\ [l']=h}} p_{l'} = \sum_{l'\in \mathcal{P}(x)\cap (h+L)} p_{l'}, 
$
as counting function supported on the semi-open bounded concave polytope $\mathcal{P}(x) =
\mathcal{S}'_{\mathbb{R}} \setminus \left( x + \mathbb{R}_{\geq0}\langle E_{v}\rangle_{v\in \mathcal{V}}\right)$, where $\mathcal{S}'_{\mathbb{R}} = \mathbb{R}_{\geq0}\langle E^{*}_{v}\rangle_{v\in \mathcal{V}}$.
The decomposition of $\mathcal{P}(x)$ into semi-open convex polytopes $\mathcal{P}_{\mathcal{I}}(x) = \{\sum_{v\in \mathcal{V}}y_{v}E_{v}\in \mathcal{S}'_{\mathbb{R}} : y_{v} < x_{v},\ \forall v\in \mathcal{I}\}$, where $x=\sum_{v\in \mathcal{V}} x_{v} E_{v}$ and $\emptyset \neq \mathcal{I} \subseteq \mathcal{V}$, expressed in terms of characteristic functions as $\mathcal{P}(x) = \sum_{\emptyset \neq \mathcal{I} \subseteq \mathcal{V}} (-1)^{|\mathcal{I}|-1} \mathcal{P}_{\mathcal{I}}(x)$, yields a decomposition of the counting function
\begin{equation*}
Q_{h}(x) = \sum_{\emptyset \neq \mathcal{I} \subseteq \mathcal{V}} (-1)^{|\mathcal{I}|-1} \sum_{l' \in \mathcal{P}_{\mathcal{I}}(x) \cap (h+L)} p_{l'}.
\end{equation*}
It is not hard to see that for any $x\in L'$ 
\begin{equation}\label{Eq-0}
Q_{h}(x) = Q_{h}(x+r_{h-[x]}), 
\end{equation}
where $r_{h-[x]} \in L' \cap \sum_{v\in \mathcal{V}}[0,1) E_{v}$ is the unique lift of $h-[x]\in L'/L$ (cf. \cite[4.3.15]{LN}). 
Since $x+r_{h-[x]}\in h+L$ it is enough to consider the restriction of $Q_{h}$ to the coset $h+L\subset L'$. Hence, for $x,l'\in h+L$ we have $l'\in \mathcal{P}_{\mathcal{I}}(x)$ exactly when $l'\in \mathcal{S}'_{\mathbb{R}}$ and
\begin{equation}\label{Eq-1}
l'_{v}\in x_{v} + \mathbb{Z}_{<0},  \qquad\qquad \textnormal{for all }v\in \mathcal{I}. 
\end{equation}
For any non-empty subset $\mathcal{I}\subseteq \mathcal{V} $ we introduce notation $V_{\mathcal{I}} = \mathbb{R}\langle E_{i}\rangle_{i\in \mathcal{I}}$, $V=V_{\mathcal{V}}$ and we denote by $\pi_{\mathcal{I}}:V\to V_{\mathcal{I}}$ the projection along subspace $V_{\mathcal{V}\setminus \mathcal{I}}$. Moreover, for $y\in \sum_{v\in \mathcal{V}} y_{v}E_{v}\in L'$ we will use notations $\mathbf{t}_{\mathcal{I}}^{y} = \pi_{\mathcal{I}}(\mathbf{t}^{y}) = \mathbf{t}^{\pi_{\mathcal{I}}(y)}:= \prod_{v\in \mathcal{I}}t_{v}^{y_{v}}$. In this terms, (\ref{Eq-1}) can be reformulated as $\pi_{\mathcal{I}}(l' + \sum_{v\in \mathcal{V}}E_{v}) + z_{\mathcal{I}} = \pi_{\mathcal{I}}(x )$ for some $z_{\mathcal{I}}\in \mathbb{Z}_{\geq0}\langle E_{i}\rangle_{i\in \mathcal{I}}$, hence
$\sum_{ l' \in \mathcal{P}_{\mathcal{I}}(x) \cap (h+L)} p_{l'}$ 
can be considered as the coefficient of $\mathbf{t}_{\mathcal{I}}^{x }$ in the Taylor expansion of $f_{h}(\mathbf{t}_{\mathcal{I}}) \prod_{i\in \mathcal{I}} \frac{\mathbf{t}_{\mathcal{I}}^{E_{i}}}{ 1 - \mathbf{t}_{\mathcal{I}}^{E_{i}}  }$ at $\mathbf{t}_{\mathcal{I}}=0$. That is, for $x\in h+L$
\begin{align*}
\sum_{l'\in \mathcal{P}_{\mathcal{I}}(x) \cap (h+L)} p_{l'} 
={}&
\Coeff \bigg( \textnormal{T} \bigg[ f_{h}(\mathbf{t}_{\mathcal{I}}) \prod_{i\in \mathcal{I}} \frac{\mathbf{t}_{\mathcal{I}}^{E_{i}}}{1 - \mathbf{t}_{\mathcal{I}}^{E_{i}}} \bigg] ,\, \mathbf{t}_{\mathcal{I}}^{x} \bigg).
\end{align*} 
Summing up with signs with respect to subsets $\mathcal{I}$ and using relation (\ref{Eq-0}) we get 
$$
Q_{h}(x) 
= 
\sum_{\emptyset \neq \mathcal{I} \subseteq \mathcal{V}} (-1)^{|\mathcal{I}|-1} 
\Coeff \bigg(  \textnormal{T} \bigg[ \mathbf{t}_{\mathcal{I}}^{-r_{h-[x]}} f_{h}(\mathbf{t}_{\mathcal{I}}) \prod_{i\in \mathcal{I}} \frac{\mathbf{t}_{\mathcal{I}}^{E_{i}}}{1 - \mathbf{t}_{\mathcal{I}}^{E_{i}}} \bigg] ,\, \mathbf{t}_{\mathcal{I}}^{x} \bigg),
$$
for any $x \in L'$. In order to unify the calculations, we remove the term $r_{h-[x]}$ and replace $f_{h}$ by a more manageable function $f_{H}$. Therefore, it is convenient to introduce the function $C_{H}:L' \to \mathbb{Z}[H]$,
\begin{equation}\label{Eq-40}
C_{H}(x)= \sum_{\emptyset \neq \mathcal{I} \subseteq \mathcal{V}} (-1)^{|\mathcal{I}|-1} 
\Coeff \bigg(  \textnormal{T} \bigg[f_{H}(\mathbf{t_{\mathcal{I}}})\prod_{i\in \mathcal{I}}\frac{\mathbf{t}_{\mathcal{I}}^{E_{i}}}{1-\mathbf{t}_{\mathcal{I}}^{E_{i}}}\bigg],\, \mathbf{t}_{\mathcal{I}}^{x} \bigg).
\end{equation}
The decomposition $f_{H} = \sum_{h\in H} f_{h} \cdot h$ yields decomposition $C_{H} = \sum_{h\in H} C_{h}\cdot h$. In general, $C_{h}\neq Q_{h}$, nevertheless $C_{[x]}(x) = Q_{[x]}(x)$ for all $x\in L'$. We also define the function $C:L' \to \mathbb{Z}$,
\begin{equation}\label{Eq-50}
C(x) = C_{[x]}(x) = \frac{1}{|H|} \sum_{\rho \in \hat{H}} \rho \left( [x]^{-1} C_{H}(x) \right).
\end{equation}
Note that $Q_{h}$ can be recovered from $C$ via relations
\begin{equation}\label{Eq-39}
Q_{[x]}(x) {}= C(x) \quad \textnormal{and} \quad
Q_{[x]}(x- q) {}=  C(x) \qquad \textnormal{for all $q \in L'\cap \sum_{v\in \mathcal{V}} [0,1) E_{v}$}.
\end{equation}

In the sequel we will study the quasipolynomial behaviour of the functions $C_{H}$ and $C$ on the Lipman cone $\mathcal{S}'$.

\subsection{The Szenes--Vergne formula}\label{sec:SzV}
In this section we recall the notion of Jeffrey--Kirwan residue and the theorem of Szenes and Vergne about quasipolinomiality of coefficient functions, following \cite{SzV} and \cite{BV}.

Let $V$ be an $r$-dimensional real vector space with a rank $r$ lattice $\Gamma\subset V$. We consider a finite collection of non-zero vectors $\Psi\subset \Gamma$ (elements of $\Psi$ are not necessarily disctinct). Assume that all elements of $\Psi$ lie in an open half-space of $V$.
Denote $\mathfrak{B}(\Psi)$ the set of all bases $\sigma
\subset \Psi$ of $V$.
Let $\Gamma^{*} =\{p \in V^{*}_{\mathbb{C}} : e^{\langle \alpha, p\rangle} =1,\, \forall\, \alpha\in \Gamma\}$ be the $2\pi\sqrt{-1}$-multiple of the dual lattice of $\Gamma$.
A \emph{big chamber} is a connected component of $V\setminus \cup_{\sigma \in \mathfrak{B}(\Psi)} \partial \mathbb{R}_{\geq0}\langle\sigma\rangle$, where $\partial \mathbb{R}_{\geq0}\langle\sigma\rangle=\{v=\sum_{\beta \in \sigma} v_{\beta}\beta  : v_{\beta}\geq0\  \forall\, \beta\, \textnormal{ and }   \exists\, \beta \textnormal{ s.t. } v_{\beta}=0 \}$ is the boundary of the closed cone $\mathbb{R}_{\geq0}\langle \sigma\rangle$. 
For a big chamber $\mathfrak{c}$ we define the set of bases $\mathfrak{B}(\Psi, \mathfrak{c})=\{\sigma \in \mathfrak{B}(\Psi) : \mathfrak{c}\subset \mathbb{R}_{\geq0}\langle\sigma\rangle\}$.  

\subsubsection{Total residue}

Consider elements of $V$ as linear functions on $V^{*}$. Denote by $\mathbb{C}[V^{*}]$ the polynomial ring on $V^{*}$ (which can be identified by the symmetric algebra of $V$). Let $\mathbf{R}_{\Psi}$ be the complex vector space spanned by fractions of form $\psi=\frac{\theta}{\prod_{k=1}^{R}\beta_{k}}$, where $\theta\in \mathbb{C}[V^{*}]$, $\beta_{k}\in \Psi$ and $R\in \mathbb{Z}_{>0}$. For any $\sigma \in \mathfrak{B}(\Psi)$ the fraction $\frac{1}{\prod_{\beta\in \sigma}\beta}$ is called \emph{simple}. Let $\mathbf{S}_{\Psi}$ be the subspace of $\mathbf{R}_{\Psi}$ spanned by simple fractions. 
Recall \cite[Proposition 7]{BV} as formulated in \cite[Theorem 1.1]{SzV}.

\begin{thm}
For $v\in V^{*}$ and $\psi\in \mathbf{R}_{\Psi}$ let $\partial_{v}\psi(z):=\frac{d}{d\epsilon} \psi(z+\epsilon v)|_{\epsilon=0}$ be the differentiation in direction $v$. Then there is a direct sum decomposition
$$
\mathbf{R}_{\Psi} = \Big( \sum_{v\in V^{*}} \partial_{v}\mathbf{R}_{\Psi} \Big) \oplus \mathbf{S}_{\Psi}.
$$
\end{thm}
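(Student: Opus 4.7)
The plan is to prove the two assertions separately: (i) the spanning identity $\mathbf{R}_{\Psi} = \sum_{v\in V^{*}} \partial_{v}\mathbf{R}_{\Psi} + \mathbf{S}_{\Psi}$, and (ii) the triviality of the intersection.

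For (i), first observe that $\partial_{v}$ preserves $\mathbf{R}_{\Psi}$: applying Leibniz to $\psi = \theta/\prod_{k}\beta_{k}$ one computes
$$\partial_{v}\Bigl(\tfrac{\theta}{\prod_{k}\beta_{k}}\Bigr) = \tfrac{\partial_{v}\theta}{\prod_{k}\beta_{k}} - \sum_{j}\langle \beta_{j},v\rangle\, \tfrac{\theta}{\beta_{j}\prod_{k}\beta_{k}},$$
which stays in $\mathbf{R}_{\Psi}$. I would filter $\mathbf{R}_{\Psi}$ by the pair $(m,d)$, where $m$ is the number of denominator factors (counted with multiplicity) and $d=\deg\theta$, and induct in lexicographic order. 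Two reductions then suffice. If $\{\beta_{1},\ldots,\beta_{m}\}$ is linearly dependent (in particular whenever $m>r$), a relation $\sum_{k}c_{k}\beta_{k}=0$ gives $\sum_{k}c_{k}\theta/\prod_{j\neq k}\beta_{j}=0$; solving for one term lets us rewrite $\psi$ in terms of fractions with fewer distinct denominator factors. Once $\{\beta_{k}\}$ is a basis $\sigma$ of $V$ and $\theta$ has positive degree, write $\theta=\sum_{j}c_{j}\beta_{j}+\tilde\theta$ with $\deg\tilde\theta<\deg\theta$ (possible since $\sigma$ spans the linear polynomials on $V^{*}$); then $\theta/\prod_{k}\beta_{k}=\sum_{j}c_{j}/\prod_{k\neq j}\beta_{k}+\tilde\theta/\prod_{k}\beta_{k}$, strictly decreasing $d$. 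Iterating, one arrives at $\theta$ constant and $\{\beta_{k}\}$ a basis, i.e.\ a simple fraction lying in $\mathbf{S}_{\Psi}$.

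For (ii), the cleanest route is to construct a linear projection $\Res:\mathbf{R}_{\Psi}\to\mathbf{S}_{\Psi}$ acting as the identity on $\mathbf{S}_{\Psi}$ and vanishing on $\sum_{v}\partial_{v}\mathbf{R}_{\Psi}$; its existence immediately yields the trivial intersection. I would follow Brion--Vergne \cite{BV}: choose coordinates $z_{1},\ldots,z_{r}$ on $V^{*}_{\mathbb{C}}$ dual to a generic flag and define an iterated-residue operator $\Res_{z_{r}=0}\cdots\Res_{z_{1}=0}$. The classical fact that iterated residues kill exact forms provides the vanishing on $\partial_{v}\mathbf{R}_{\Psi}$, while a direct calculation on $1/\prod_{\beta\in\sigma}\beta$ shows the operator is nondegenerate on $\mathbf{S}_{\Psi}$. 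To make the output canonical (basis-free and $\mathbf{S}_{\Psi}$-valued rather than scalar), one interprets $\mathbf{R}_{\Psi}/\sum_{v}\partial_{v}\mathbf{R}_{\Psi}$ via the algebraic de Rham cohomology of the hyperplane arrangement complement $V^{*}_{\mathbb{C}}\setminus\bigcup_{\beta\in\Psi}\{\beta=0\}$, in which the classes of simple fractions form a distinguished basis indexed by $\mathfrak{B}(\Psi)$.

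The main obstacle is (ii). Step (i) is essentially formal bookkeeping once the two reductions are set up, but showing that no nonzero simple fraction lies in $\sum_{v}\partial_{v}\mathbf{R}_{\Psi}$ needs a genuine global invariant to detect the classes of $\mathbf{S}_{\Psi}$. I would import the residue construction from \cite{BV} rather than re-derive it, and merely verify that it satisfies the two properties (identity on $\mathbf{S}_{\Psi}$, vanishing on derivatives) needed to conclude the direct-sum decomposition.
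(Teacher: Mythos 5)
First, a caveat on the comparison: the paper does not prove this statement at all — it is recalled verbatim as \cite[Proposition 7]{BV} (in the formulation of \cite[Theorem 1.1]{SzV}) — so there is no in-paper argument to measure your sketch against. You are reconstructing the Brion--Vergne proof, and your overall strategy (a partial-fraction reduction for the spanning claim, a residue functional for directness) is indeed the standard one.

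There is, however, a concrete gap in your part (i): the two reductions you set up are purely algebraic rewritings that never produce an element of $\sum_{v}\partial_{v}\mathbf{R}_{\Psi}$, so they cannot terminate at simple fractions in all cases. They fail exactly when the denominator, after removing genuine linear dependencies, is a product of linearly independent forms with a repeated factor, or spans a proper subspace of $V$. Take $r=2$ and $\psi=1/(\beta_{1}^{2}\beta_{2})$ with $\beta_{1},\beta_{2}$ independent: the only linear relation on the multiset $\{\beta_{1},\beta_{1},\beta_{2}\}$ is the degenerate one $\beta_{1}-\beta_{1}=0$, so your first reduction outputs a tautology, and your second does not apply because the multiset is not a basis. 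Yet $\psi$ is not simple; it lies in the derivative summand, since $\partial_{v}\bigl(1/(\beta_{1}\beta_{2})\bigr)=1/(\beta_{1}^{2}\beta_{2})$ for $v$ with $\langle\beta_{1},v\rangle=-1$, $\langle\beta_{2},v\rangle=0$. The missing third reduction is precisely this: when the distinct denominator factors are linearly independent, choose $v$ dual to a factor of multiplicity at least two (resp.\ $v$ annihilating all factors, if they do not span $V$) and subtract an explicit $\partial_{v}(\cdot)$ to lower the multiplicity (resp.\ to exhibit the fraction as a derivative). With that added, the induction closes. Your part (ii) is acceptable as a plan, since you import the residue/de Rham separation argument from \cite{BV} — which is exactly what the paper itself does for the whole theorem — but note that the simple fractions are \emph{not} linearly independent (they satisfy $1/(\alpha\beta)=1/(\alpha\gamma)+1/(\beta\gamma)$ whenever $\gamma=\alpha+\beta$), so their classes do not form a basis of the top cohomology indexed by all of $\mathfrak{B}(\Psi)$; the separating functionals must detect the whole span of $\mathbf{S}_{\Psi}$, not just the individual generators.
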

Thus, the projection to the second component $\textnormal{Tres}_{\Psi}:\mathbf{R}_{\Psi} \to \mathbf{S}_{\Psi}$ is a well defined map, which is called the \emph{total residue}.

Grading on $\mathbf{R}_{\Psi}$ is defined as follows. If $\theta \in \mathbb{C}[V^{*}]$ is a homogeneous polynomial of degree $N$ then $\frac{\theta}{\prod_{k=1}^{R}\beta_{k}}\in \mathbf{R}_{\Psi}$ has homogeneous degree $N-R$. The homogeneous degree $d$ part of $\mathbf{R}_{\Psi}$ is denoted by $\mathbf{R}_{\Psi}[d]$ and one has the decomposition $
\mathbf{R}_{\Psi}=\oplus_{d\in \mathbb{Z} }\mathbf{R}_{\Psi}[d]$.

\begin{remark}\label{Rk-1}
Since $\mathbf{S}_{\Psi}\subset \mathbf{R}_{\Psi}[-r]$ the total residue $\textnormal{Tres}_{\Psi}$ vanishes on $\mathbf{R}_{\Psi}[d]$ unless $d=-r$.
\end{remark}

Denote $\mathbb{C}[[V^{*}]]$ the formal power series on $V^{*}$ and let $\widehat{\mathbf{R}}_{\Psi}$ be the complex vector space spanned by fractions of form $\frac{\Theta}{\prod_{k=1}^{R}\beta_{k}}$, where $\Theta\in \mathbb{C}[[V^{*}]]$ and $\beta_{k}\in \Psi$. There is a filtration by degree on $\mathbb{C}[[V^{*}]]$: for $\Theta=\sum_{l\geq0} \Theta_{l}$ with $\Theta_{l}\in \mathbb{C}[V^{*}]_{l}$ homogeneous of degree $l$ we say that $\Theta \in \mathbb{C}[[V^{*}]]_{\geq d}$ if $\Theta_{l}=0$ for all $l<d$. It induces a filtration  $\widehat{\mathbf{R}}_{\Psi} = \sum_{d\in \mathbb{Z}} \widehat{\mathbf{R}}_{\Psi}[\geq d]$ compatible with the grading on $\mathbf{R}_{\Psi}$: we have $\frac{ \Theta }{ \prod_{k=1}^{R}\beta_{k}}\in  \widehat{\mathbf{R}}_{\Psi}[\geq d]$ if $\Theta \in \mathbb{C}[[V^{*}]]_{\geq d+R}$.
The total residue can be extended to $\widehat{\mathbf{R}}_{\Psi}$ as follows. Fix a $d>-r$ and write $\varphi \in \widehat{\mathbf{R}}_{\Psi}$ uniquely as $ \varphi = \varphi_{poly} + \varphi_{ps}$ with $\varphi_{poly} \in \oplus_{k<d }\mathbf{R}_{\Psi}[k]$ and $\varphi_{ps}\in \widehat{\mathbf{R}}_{\Psi}[\geq d]$, then define $
\textnormal{Tres}_{\Psi}( \varphi ) := \textnormal{Tres}_{\Psi}(\varphi_{poly})$. Note that it does not depend on the choice of $d$ by Remark \ref{Rk-1}.

\subsubsection{The Jeffrey--Kirwan residue}

Fix a volume form (or a scalar product) on $V$. For each $\sigma\in \mathfrak{B}(\Psi)$ it gives the volume $vol(\sigma)$ of any paralellepiped $\sum_{\beta \in \sigma}[0,1]\beta$.  
A big chamber $\mathfrak{c}$ is also fixed. 
Using projection $\textnormal{Tres}_{\Psi}$, the \emph{Jeffrey--Kirwan residue} on $\mathbf{S}_{\Psi}$ is defined by the formula
$$
\textnormal{JK}^{\mathfrak{c}} \left(\frac{1}{\prod_{\beta \in \sigma} \beta}\right) 
=
\begin{cases}
\dfrac{1}{vol(\sigma)} & \sigma \in \mathfrak{B}(\Psi,\mathfrak{c}) \\
0 & \sigma \in \mathfrak{B}(\Psi)\setminus \mathfrak{B}(\Psi, \mathfrak{c})
\end{cases} 
$$
and it vanishes on the direct summand $\sum_{v\in V^*}(\partial_{v} \textbf{R}_{\Psi})$.

\subsubsection{The Szenes--Vergne formula}

Consider a rational function $f(\mathbf{t}) = \frac{ \sum_{\xi \in I} c_{\xi} \mathbf{t}^{\xi} }{ \prod_{k=1}^{R} (1 - \mathbf{t}^{\beta_{k}}) }$ with $I$ finite subset of $\Gamma$ and $\beta_{k}\in \Psi$. 
It can be associated with a (meromophic) function on $V^{*}$ of form
$\varphi(z) = f(e^{z}) = \frac{ \sum_{\xi \in I} c_{\xi} e^{\langle \xi, z\rangle} }{ \prod_{k=1}^{R} (1 -  e^{\langle\beta_{k},z\rangle}) }.$ 
Expand $e^{\langle \xi, z\rangle}$ and $(1 - e^{\langle \beta_{k}, z\rangle})^{-1} = -\langle \beta_{k},z\rangle ^{-1} \sum_{l\geq0}  \gamma_{k}(z)^{l}$, where $\gamma_{k}(z) = - \frac{\langle \beta_{k},z\rangle}{2!} - \frac{\langle \beta_{k},z\rangle^{2}}{3!} - \ldots$ into power series, thus  $f(e^{z})$ can be considered as an element of $\widehat{\mathbf{R}}_{\Psi}$.

Let the \emph{set of poles} $SP(\varphi,\Gamma^{*})$ of $\varphi$ be the set of those $p\in V^{*}_{\mathbb{C}}$ such that $\{\beta_{k} : e^{\langle \beta_{k},p\rangle}=1\}$ spans $V$. Note that $SP(\varphi,\Gamma^{*})$ is invariant under translation by elements of $\Gamma^{*}$. Let the \emph{reduced set of poles} be the quotient set $RSP(\varphi,\Gamma^{*}) = SP(\varphi,\Gamma^{*})/\Gamma^{*} = \cup_{\sigma \in \mathfrak{B}(\Psi)} (\mathbb{Z}\sigma)^{*}/\Gamma^{*} $, where $(\mathbb{Z}\sigma)^{*}=\{ p\in V^{*}_{\mathbb{C}} : e^{\langle \alpha, p\rangle}=1,\, \forall\, \alpha\in \sigma\}$ is the $(2\pi\sqrt{-1})$-multiple of the dual lattice of $\mathbb{Z}\sigma$. Note that the reduced set of poles is a finite set and moreover, $e^{\langle \alpha, p\rangle}$ is well defined for any $p \in RSP(\varphi, \Gamma^{*})$ and $\alpha \in \Gamma$, since it does not depend on the representative of $p$.

Finally, we state a slightly weaker version of the \cite[Theorem 2.3 and Lemma 2.2]{SzV}. 
\begin{thm}\label{Thm-SzV}
Denote  $vol\,\Gamma$ the volume form on $V$ such that parallelepiped of basis vectors of $\Gamma$ has volume $1$.
Then for a big chamber $\mathfrak{c}$ the function 
$$
\mathcal{Q}^{\mathfrak{c}}(\lambda) =
 \sum_{p\in RSP(\varphi, \Gamma^{*})} \textnormal{JK}^{\mathfrak{c}}\left( e^{\langle\lambda, z-p\rangle} \varphi(p-z)\right)_{vol\, \Gamma}
$$
is an exponential-polynomial (quasipolynomial) on $\Gamma$, moreover
$
\mathcal{Q}^{\mathfrak{c}}(\lambda)= \Coeff(\textnormal{T}[f(\mathbf{t})],\, \mathbf{t}^{\lambda})$  for all 
\begin{equation}\label{Eq-BigLambdaf}
\lambda \in  \Lambda^{\mathfrak{c}}(f): = \bigcap_{\xi \in I}(\xi +\mathfrak{c} -\square(\beta)) \cap \Gamma, 
\end{equation}
where $\square(\beta) = \sum_{i=1}^{R}[0,1]\beta_{i}$. 
\end{thm}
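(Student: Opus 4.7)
The plan is to follow the Brion--Vergne and Szenes--Vergne strategy: represent the Taylor coefficient as a contour integral on a real torus, deform the contour to pick up residues at the reduced set of poles, and recognise each local contribution as a Jeffrey--Kirwan residue with the chamber $\mathfrak{c}$ dictating which bases enter.

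First I would write
$$\Coeff(\textnormal{T}[f(\mathbf{t})],\, \mathbf{t}^{\lambda}) = \frac{1}{(2\pi\sqrt{-1})^{r}}\oint \frac{f(\mathbf{t})}{\mathbf{t}^{\lambda}}\,\frac{d\mathbf{t}}{\mathbf{t}},$$
taken over a small real torus around the origin, and apply the change of variables $\mathbf{t}=e^{z}$. This converts the Taylor coefficient into
$$\frac{1}{\mathrm{vol}\,\Gamma}\int_{-\eta+iV^{*}/\Gamma^{*}} e^{-\langle \lambda, z\rangle}\,\varphi(z)\,dz,$$
where $\eta\in V^{*}_{\mathbb{R}}$ is chosen so that the geometric expansion of each factor $(1-e^{\langle \beta_{k},z\rangle})^{-1}$ is valid along the contour. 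Such an $\eta$ exists because all $\beta_{k}$ lie in a common open half-space, which is precisely the hypothesis imposed on $\Psi$.

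Next I would deform the contour by translating $-\eta$ in a direction compatible with the big chamber $\mathfrak{c}$, say towards a generic interior vector. The hyperplane arrangement $\{\langle \beta_{k},z\rangle \in 2\pi\sqrt{-1}\mathbb{Z}\}$ is crossed, and by Cauchy's theorem each crossing produces a residue contribution at a pole $p$ lying in $\cup_{\sigma}(\mathbb{Z}\sigma)^{*}/\Gamma^{*}=RSP(\varphi,\Gamma^{*})$. Near such a $p$, expand $\varphi(p-z)\,e^{\langle\lambda,z-p\rangle}$ as an element of $\widehat{\mathbf{R}}_{\Psi}$ (in the local variables $\langle\beta_{k},z\rangle$), and use the Brion--Vergne characterisation of $\textnormal{JK}^{\mathfrak{c}}$ as the iterated residue adapted to the chamber: this produces exactly $\textnormal{JK}^{\mathfrak{c}}(e^{\langle \lambda, z-p\rangle}\varphi(p-z))$ with the volume normalisation $\mathrm{vol}\,\Gamma$. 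Summing these contributions over the (finite) set $RSP(\varphi,\Gamma^{*})$ gives $\mathcal{Q}^{\mathfrak{c}}(\lambda)$. The quasipolynomiality statement follows automatically, because each JK--residue contribution is obtained by applying a polynomial differential operator in $\lambda$ to an exponential $e^{-\langle \lambda, p\rangle}$ with $p$ a representative of a class in the finite set $RSP(\varphi,\Gamma^{*})$.

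The delicate point, and the main obstacle, is to identify the precise region $\Lambda^{\mathfrak{c}}(f)$ in which the contour deformation loses no boundary term and gives the Taylor coefficient exactly. Each monomial $c_{\xi}\mathbf{t}^{\xi}$ in the numerator effectively shifts $\lambda$ to $\lambda-\xi$, so the deformation must be valid for every $\xi\in I$ simultaneously. Moreover, the ambiguity in expanding $(1-\mathbf{t}^{\beta_{k}})^{-1}$ versus $-\mathbf{t}^{-\beta_{k}}(1-\mathbf{t}^{-\beta_{k}})^{-1}$ translates, chamber by chamber, into a possible shift of the admissible region by $\beta_{k}$ or $0$; taking the union over both sides of each hyperplane produces the Minkowski sum $\square(\beta)=\sum_{k=1}^{R}[0,1]\beta_{k}$. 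Requiring the deformation to succeed \emph{uniformly} in these choices yields precisely the condition
$$\lambda \in \bigcap_{\xi\in I}(\xi+\mathfrak{c}-\square(\beta))\cap \Gamma = \Lambda^{\mathfrak{c}}(f).$$
This Minkowski--sum bookkeeping is the only nontrivial place where the shift $\square(\beta)$ enters, and verifying that the formula $\mathcal{Q}^{\mathfrak{c}}(\lambda)=\Coeff(\textnormal{T}[f(\mathbf{t})],\mathbf{t}^{\lambda})$ fails precisely outside this region (up to the expected boundary wall behaviour) is the technical heart of the argument.
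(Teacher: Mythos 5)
The paper does not prove this statement at all: Theorem \ref{Thm-SzV} is quoted verbatim (in a ``slightly weaker version'') from Szenes--Vergne \cite{SzV}, Theorem 2.3 and Lemma 2.2, so there is no internal proof for your argument to be compared against. What you have written is an attempt to reprove the cited result itself, and as such it is an outline of the correct philosophy rather than a proof.

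The genuine gap is in the contour-deformation step. In dimension $r>1$ one cannot simply translate the real torus $-\eta + iV^{*}/\Gamma^{*}$ towards the chamber $\mathfrak{c}$ and ``pick up a residue at each hyperplane crossing'' by Cauchy's theorem: the singular locus of $\varphi$ is an arrangement of translated hyperplanes $\langle\beta_{k},z\rangle\in 2\pi\sqrt{-1}\mathbb{Z}$, and at points $p$ where more than $r$ of these hyperplanes meet (non-normal crossings) there is no canonically defined local residue to ``pick up''. Making this precise is exactly what the total residue $\textnormal{Tres}_{\Psi}$, the decomposition $\mathbf{R}_{\Psi}=(\sum_{v}\partial_{v}\mathbf{R}_{\Psi})\oplus\mathbf{S}_{\Psi}$, and the iterated-residue/diagonal-basis machinery of \cite{BV,SzV} are for; your sketch invokes ``the Brion--Vergne characterisation of $\textnormal{JK}^{\mathfrak{c}}$ as the iterated residue adapted to the chamber'' at precisely the point where the work happens, without supplying it. Likewise, you explicitly defer the identification of the validity region $\Lambda^{\mathfrak{c}}(f)=\bigcap_{\xi\in I}(\xi+\mathfrak{c}-\square(\beta))\cap\Gamma$ and the verification that no boundary terms are lost, calling it ``the technical heart of the argument''. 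Since both the local residue extraction and the domain bookkeeping are left unestablished, the proposal does not constitute a proof; for the purposes of this paper the correct move is simply to cite \cite{SzV} as the authors do.
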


\begin{remark}\label{Rk-3}
By the properties of the Jeffrey--Kirwan residue, the degree of the quasipolynomial is at most the number of linear terms in the denominator of $f$ minus the dimension of $V$.
\end{remark}

\begin{remark}\label{Rk-4}
Substituting $\frac{1}{1-\mathbf{t}^{\beta_{k}}}$ by equivalent fraction $\frac{1+\mathbf{t}^{\beta_{k}}+\ldots + \mathbf{t}^{(n-1) \beta_{k}}}{1-\mathbf{t}^{n\beta_{k}}}$ in $f(\mathbf{t})$ does not change the domain $\Gamma \cap_{\xi \in I}(\xi +\mathfrak{c} -\square(\beta))$. Thus, 
using expansion of type (\ref{Eq-49}), Theorem \ref{Thm-SzV} remains valid for 
$
\varphi_{H}(z) =  \frac{ \sum_{\xi \in I} c_{\xi} e^{\langle \xi, z\rangle} \prod_{k=1}^{R}\sum_{j=0}^{|H|-1} h_{k}^{j} e^{\langle j\beta_{k},z \rangle}  }{ \prod_{k=1}^{R} (1 -  e^{\langle |H|\beta_{k},z\rangle}) }
$ associated with rational functions with $\mathbb{Z}[H]$-coefficients of form
$$
f_{H}(\mathbf{t}) 
= \frac{ \sum_{\xi \in I} c_{\xi} \mathbf{t}^{\xi} }{ \prod_{k=1}^{R} (1 - h_{k}\mathbf{t}^{\beta_{k}}) }
$$
with $h_{k}$ in a finite abelian group $H$.
\end{remark}


\begin{remark}\label{Rk-2}
If $\xi \in \Gamma \cap \sum_{i=1}^{R} (0,1)\beta_{i}$ for all $\xi$ in the numerator of 
$f$ then $0\in \Lambda^{\mathfrak{c}}(f) $, thus $\mathcal{Q}^{\mathfrak{c}}(0) = \Coeff(\textnormal{T}[f(\mathbf{t})],\, \mathbf{t}^{0}) = 0$ by Theorem \ref{Thm-SzV}. This property also holds for $f_{H}$ with $\mathbb{Z}[H]$-coefficients as in Remark \ref{Rk-4}.
\end{remark}

\subsubsection{}\label{sec:1dim}
In dimension one the quasipolynomial in the above theorem can be computed as follows. 
Assume that $\beta_{1},\ldots,\beta_{k}$ are positive numbers and let $q>0$ be the generator of the lattice $\Gamma$, 
i.e. $\Gamma = q\mathbb{Z}$. 
In this case $\Gamma^{*}= \frac{2\pi \sqrt{-1}}{q}\mathbb{Z}$ and $RSP(\varphi, \Gamma^{*}) = 2\pi \sqrt{-1} \bigcup_{k=1}^{R}\left\{0, 
\frac{1}{\beta_{k}},\ldots, \frac{d_{k}-1}{\beta_{k}} \right\}$ with $d_{k} = \frac{\beta_{k}}{q} \in \mathbb{Z}_{>0}$. Then the 
quasipolynomial is given in terms of usual residue
$$
\lambda  \mapsto q \cdot \sum_{p\in RSP(\varphi, \Gamma^{*})} \Res_{z=0} \left( e^{\lambda(z-p)} \frac{\sum_{\xi \in I} c_{k} e^{\xi(p-z)} }{ \prod_{k=1}^{R} (1 - e^{p\beta_{k}} e^{-\beta_{k}z})} \right)dz
$$
(we chose the volume form on $V$ such that $vol(q)=1$).

We emphasize that the above theorem gives an explicit formula how to compute this quasipolynomial, 
nevertheless the explicit computation may be lengthy.

\section{Uniqueness of the quasipolynomial}\label{sec:unique}

Recall that the counting function $C_{H}(x)$ can be written as an alternating sum of coefficient functions of form $\Coeff \Big(\textnormal{T} \Big[ 
f_{H}(\mathbf{t}_{\mathcal{I}})
\prod_{i\in \mathcal{I}} \frac{\mathbf{t}_{\mathcal{I}}^{E_{i}}}{1 - \mathbf{t}_{\mathcal{I}}^{E_{i}}} \Big] ,\, \mathbf{t}_{\mathcal{I}}^{x} \Big)$ for $x\in  L'$. 
In general, the vector configuration $\Psi_{\mathcal{I}} = \{ \pi_{\mathcal{I}}(E_{l}^{*}), E_{i} : l\in\mathcal{E}, 
i\in \mathcal{I}\}$ of vectors appearing in the denominator of $f_{H}(\mathbf{t}_{\mathcal{I}})
\prod_{i\in \mathcal{I}} \frac{\mathbf{t}_{\mathcal{I}}^{E_{i}}}{1 - \mathbf{t}_{\mathcal{I}}^{E_{i}}}$
divides the open cone $\textnormal{int} (\pi_{\mathcal{I}}(\mathcal{S}'_{\mathbb{R}}))$ into several big chambers.  Hence, in principle one may happen that 
$C_{H}(x)$ can be represented with different quasipolynomials on different chambers.

However, we show that still there is an affine `subcone' $\sum_{v\in \mathcal{V}}(\delta_{v}-2)E^{*}_{v} + \textnormal{int}(\mathcal{S}')$
 of the Lipman cone, where $C_{H}(x)$ is  a unique quasipolynomial. 
In particular, the restriction of $C_{H}$ in the open cone $\textnormal{int}( \mathcal{S}'_{\mathbb{R}})$ to a sufficiently sparse finite rank sublattice of $L'$ is a polynomial.

\subsection{Factorization of the projections of $f_H$}
First, we prove a lemma showing how $f_{H}(\mathbf{t})$ reshapes for different projections. This is the core of the proof of the uniqueness result in Section \ref{ss:unique}. Moreover, it will be important for the structure theorems in Section \ref{ss:structure} too. 
\begin{lemma}\label{Lm-5}
\begin{enumerate}[(i)]
\item\label{Lm-5i}
Let $\overline{v'v''}$ be an edge of a tree $\mathcal{T}$ with vertex set $\mathcal{V}$. Decompose $\mathcal{T}\setminus \overline{v'v''}$ 
into disjoint union of trees $\mathcal{T}_{\mathcal{V}'}$ and $\mathcal{T}_{\mathcal{V}''}$ with vertex sets $\mathcal{V}' \ni v'$ and $\mathcal{V}'' \ni v''$, respectively.
 Then for all non-empty vertex set $\mathcal{J} \subseteq \mathcal{V}''$ the fraction
\begin{equation}\label{Eq-28}
\Big( 1 - h_{v''}\mathbf{t}_{\mathcal{J}}^{ E^{*}_{v''}} \Big) \prod_{v\in \mathcal{V}'} \Big( 1 - h_{v}\mathbf{t}_{\mathcal{J}}^{ E_{v}^{*}  } \Big)^{\delta_{v,\mathcal{V}}-2}
\in \mathbb{Z}[H] [ \pi_{\mathcal{J}}(L')],
\end{equation}
i.e. it is a Laurent polynomial  with coefficients in $\mathbb{Z}[H]$, supported on $\pi_{\mathcal{J}}(\mathcal{S}')$.

\item\label{Lm-5ii}
Let $\mathcal{J}\subseteq \mathcal{V}$ be a non-empty vertex set such that its associated subgraph $\mathcal{T}_{\mathcal{J}}$  is a tree.   Then
$$
\prod_{v\in \mathcal{V}} \Big( 1 - h_{v} \mathbf{t}_{\mathcal{J}}^{E^{*}_{v}} \Big)^{\delta_{v,\mathcal{V}}-2}
=
P_{\mathcal{J}} ( \mathbf{t}_{\mathcal{J}} ) \cdot \prod_{j\in \mathcal{J}} \Big( 1 - h_{j}\mathbf{t}_{\mathcal{J}}^{E^{*}_{j}} \Big)^{\delta_{j,\mathcal{J}}-2},
$$
where 
\begin{equation}\label{Eq-58}
P_{\mathcal{J}}(\mathbf{t}_{\mathcal{J}}) 
= 
\prod_{u\in \mathcal{V}\setminus \mathcal{J}} \Big(1 - h_{u} \mathbf{t}^{E^{*}_{u}}_{\mathcal{J}} \Big)^{\delta_{u,\mathcal{V}}-2} 
\prod_{j\in \mathcal{J}} \Big( 1 - h_{j} \mathbf{t}^{E^{*}_{j}}_{\mathcal{J}} \Big)^{\delta_{j,\mathcal{V}} - \delta_{j,\mathcal{J}}}
\in \mathbb{Z}[H][\pi_{\mathcal{J}}(L')]
\end{equation}
is a Laurent polynomial with coefficients in $\mathbb{Z}[H]$, supported on $\pi_{\mathcal{J}}(\mathcal{S}')$.  

\item\label{Lm-5iii}
There exists a positive definite rational matrix $\widetilde{A}_{\mathcal{J}}$ associated with the subtree $\mathcal{T}_{\mathcal{J}}$ such that
$$
\left[ \pi_{\mathcal{J}}(E_{j}^{*}) \right]_{j\in \mathcal{J}} \cdot \widetilde{A}_{\mathcal{J}} = \left[ E_{j} \right]_{j\in \mathcal{J}}.
$$
\end{enumerate}
\end{lemma}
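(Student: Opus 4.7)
The matrix $[\pi_{\mathcal{J}}(E_j^*)]_{j\in \mathcal{J}}$, read in the basis $\{E_w\}_{w\in\mathcal{J}}$ of $V_{\mathcal{J}}$, has $(j,w)$-entry $(A^{-1})_{jw}$, so it coincides with the principal submatrix $(A^{-1})[\mathcal{J},\mathcal{J}]$. I would set $\widetilde A_{\mathcal{J}} := ((A^{-1})[\mathcal{J},\mathcal{J}])^{-1}$, which is rational and positive definite because principal submatrices of the positive definite matrix $A^{-1}$ are positive definite, and inverses of positive definite matrices are positive definite. The claimed identity then just expresses $(A^{-1})[\mathcal{J},\mathcal{J}]\cdot\widetilde A_{\mathcal{J}} = I$ in the basis $\{E_w\}_{w\in\mathcal{J}}$.

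\textbf{Plan for (ii) from (i).} The equality between $\prod_{v\in\mathcal{V}}(1-h_v\mathbf{t}_{\mathcal{J}}^{E_v^*})^{\delta_{v,\mathcal{V}}-2}$ and $P_{\mathcal{J}}(\mathbf{t}_{\mathcal{J}})\prod_{j\in\mathcal{J}}(1-h_j\mathbf{t}_{\mathcal{J}}^{E_j^*})^{\delta_{j,\mathcal{J}}-2}$ with $P_{\mathcal{J}}$ as in (\ref{Eq-58}) is purely algebraic via the splitting $\delta_{j,\mathcal{V}}-2 = (\delta_{j,\mathcal{V}}-\delta_{j,\mathcal{J}})+(\delta_{j,\mathcal{J}}-2)$. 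The content is that $P_{\mathcal{J}}$ is a Laurent polynomial with support in $\pi_{\mathcal{J}}(\mathcal{S}')$. Here I would use that $\mathcal{V}\setminus\mathcal{J}$ splits into connected hanging subtrees $\mathcal{W}_1,\dots,\mathcal{W}_m$, each joined to $\mathcal{J}$ by a unique edge $\overline{j_i w_{0,i}}$ with $j_i\in\mathcal{J}$, $w_{0,i}\in\mathcal{W}_i$. Since $\delta_{j,\mathcal{V}}-\delta_{j,\mathcal{J}}$ counts the hanging subtrees at $j$, redistributing the external factors rewrites
\begin{equation*}
P_{\mathcal{J}}(\mathbf{t}_{\mathcal{J}})=\prod_{i=1}^{m}\Bigl[\,(1-h_{j_i}\mathbf{t}_{\mathcal{J}}^{E_{j_i}^*})\prod_{w\in\mathcal{W}_i}(1-h_w\mathbf{t}_{\mathcal{J}}^{E_w^*})^{\delta_{w,\mathcal{V}}-2}\,\Bigr],
\end{equation*}
and each bracket is a Laurent polynomial supported on $\pi_{\mathcal{J}}(\mathcal{S}')$ by applying (\ref{Lm-5i}) to $(\mathcal{V}',\mathcal{V}'',v',v'')=(\mathcal{W}_i,\mathcal{V}\setminus\mathcal{W}_i,w_{0,i},j_i)$; the product inherits these properties since $\pi_{\mathcal{J}}(\mathcal{S}')$ is closed under addition.

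\textbf{Plan for (i).} I would induct on $|\mathcal{V}'|$. The crucial input, valid for any leaf $e$ of $\mathcal{T}$ with unique neighbor $e'$, is the relation $E_e=A_{ee}E_e^*-E_{e'}^*$, rearranged as $E_{e'}^*=A_{ee}E_e^*-E_e$. Applying $\pi_{\mathcal{J}}$ with $e\notin\mathcal{J}$ (so $\pi_{\mathcal{J}}(E_e)=0$) gives $\pi_{\mathcal{J}}(E_{e'}^*)=A_{ee}\pi_{\mathcal{J}}(E_e^*)$ and $h_{e'}=h_e^{A_{ee}}$ in $H$, whence
\begin{equation*}
(1-h_{e'}\mathbf{t}_{\mathcal{J}}^{E_{e'}^*}) = (1-h_e\mathbf{t}_{\mathcal{J}}^{E_e^*})\cdot R_e,\qquad R_e := \sum_{k=0}^{A_{ee}-1}(h_e\mathbf{t}_{\mathcal{J}}^{E_e^*})^k,
\end{equation*}
a Laurent polynomial supported on $\mathbb{Z}_{\geq0}\,\pi_{\mathcal{J}}(E_e^*)\subset\pi_{\mathcal{J}}(\mathcal{S}')$. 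The base case $|\mathcal{V}'|=1$ forces $v'$ to be a leaf of $\mathcal{T}$ and the identity with $(e,e')=(v',v'')$ gives $Q=R_{v'}$. For $|\mathcal{V}'|\geq 2$, I would pick a leaf $e\neq v'$ of $\mathcal{T}_{\mathcal{V}'}$; necessarily $\delta_{e,\mathcal{V}}=1$ since $e$'s only neighbor lies in $\mathcal{V}'$. Using the identity, one copy of $(1-h_{e'}\mathbf{t}_{\mathcal{J}}^{E_{e'}^*})$ cancels the factor $(1-h_e\mathbf{t}_{\mathcal{J}}^{E_e^*})^{-1}$ in $Q$, producing $Q=R_e\cdot Q'$ where $Q'$ has the shape of the expression in (\ref{Lm-5i}) for the reduced vertex set $\mathcal{V}\setminus\{e\}$ with valencies $\delta_{v,\mathcal{V}\setminus\{e\}}$ and the same ambient $E_v^*$.

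\textbf{The main obstacle.} The inductive step produces $Q'$ in terms of the ambient $E_v^*$ of $\mathcal{T}$, not the $\widetilde E_v^*$ of the reduced tree $\mathcal{T}_{\mathcal{V}\setminus\{e\}}$, so the induction hypothesis does not apply verbatim; this is the technical heart of the argument. I would handle it by first reducing to a one-variable setting: a Schur complement calculation along the cut $\overline{v'v''}$ (which also yields (iii) as a by-product) shows that all $\pi_{\mathcal{J}}(E_v^*)$ for $v\in\mathcal{V}'\cup\{v''\}$ are positive rational multiples of a common vector in $V_{\mathcal{J}}\otimes\mathbb{Q}$, so the claim reduces to a one-variable divisibility in $\mathbb{Z}[H][u,u^{-1}]$. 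In this one-variable setting, the adjunction relations at the various leaves of $\mathcal{T}_{\mathcal{V}'}$, propagated along chains of valency-two vertices via the shifting identity $(1-h_e\mathbf{t}_{\mathcal{J}}^{E_e^*})^{-1}=R_e(1-h_{e'}\mathbf{t}_{\mathcal{J}}^{E_{e'}^*})^{-1}$, yield all the needed cancellations: each negative power at a leaf of $\mathcal{T}_{\mathcal{V}'}$ shifts along its chain until it is absorbed either at a node of $\mathcal{T}$ sitting in $\mathcal{V}'$ (against one of the available positive exponents $\delta_{v,\mathcal{V}}-2\geq 1$) or at $v'$ (against the distinguished factor $(1-h_{v''}\mathbf{t}_{\mathcal{J}}^{E_{v''}^*})$). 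The support in $\pi_{\mathcal{J}}(\mathcal{S}')$ is preserved throughout because every absorbed $R_e$ is supported on $\mathbb{Z}_{\geq 0}\pi_{\mathcal{J}}(E_e^*)$.
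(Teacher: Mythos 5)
Your treatment of (iii) is correct and in fact slicker than the paper's: reading $[\pi_{\mathcal{J}}(E_j^*)]_{j\in\mathcal{J}}$ in the basis $\{E_w\}_{w\in\mathcal{J}}$ as the principal submatrix $(A^{-1})[\mathcal{J},\mathcal{J}]$ and inverting gives the Schur complement of $A$, whereas the paper removes end edges one at a time, citing \cite{EN}. Do note, though, that ``associated with the subtree'' is used later (the relation $\widetilde{A}_{ww}\pi_{\mathcal{J}}(E^*_w)-\pi_{\mathcal{J}}(E^*_{w'})=E_w$ at end vertices of $\mathcal{T}_{\mathcal{J}}$), so you must observe that the Schur complement only perturbs diagonal entries at the attachment vertices of the hanging trees — this follows from your own decomposition of $\mathcal{V}\setminus\mathcal{J}$ into components each meeting $\mathcal{J}$ at one vertex. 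Your reduction of (ii) to (i) coincides with the paper's.

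The gap is in (i), which is the heart of the lemma: the final ``shift along the chain and absorb against a single factor'' mechanism does not work. What is true is that for a leaf $e$ of $\mathcal{T}$ in $\mathcal{V}'$ and the first node $w$ (or $v''$) reached from $e$ through valency-two vertices, $\pi_{\mathcal{J}}(E^*_w)=n\,\pi_{\mathcal{J}}(E^*_e)$ with $n\in\mathbb{Z}_{>0}$ and $h_w=h_e^n$, so that one copy of $(1-h_w\mathbf{t}_{\mathcal{J}}^{E^*_w})$ absorbs $(1-h_e\mathbf{t}_{\mathcal{J}}^{E^*_e})^{-1}$. But a node $w$ can receive up to $\delta_w-1$ such chains while owning only $\delta_w-2$ factors, and the leftover deficiency cannot be passed further up: beyond the first node the relevant exponent ratios are no longer integers (this is also why the step-by-step version of your shifting identity fails — consecutive continued-fraction numerators $n_i, n_{i+1}$ do not divide one another). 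Concretely, take $\mathcal{V}'=\{w,e_1,e_2,e_3\}$ with $w=v'$ adjacent to $v''$ and to the three leaves, $A_{ww}=2$, $A_{e_ie_i}=2,3,5$; in your one-variable reduction the fraction becomes $(1-u^{29})(1-u^{30})^2/\bigl[(1-u^{15})(1-u^{10})(1-u^{6})\bigr]$. This \emph{is} a polynomial, but no matching of denominators to numerator factors exhibits it: only two copies of $(1-u^{30})$ are available for three denominators, and none of $15,10,6$ divides $29$, so the third leaf is absorbed neither at the node nor against the distinguished factor. The divisibility here is genuinely a property of the whole product (visible by counting cyclotomic factors when $H$ is trivial, but delicate over $\mathbb{Z}[H]$, which has zero divisors). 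The paper closes exactly this gap with the inclusion–exclusion identity $1-h_0\cdots h_s\mathbf{t}^{x_0+\cdots+x_s}=\sum_i q_i(1-h_i\mathbf{t}^{x_i})$ applied to the incoming factor $(1-h_{v''}\mathbf{t}_{\mathcal{J}}^{E^*_{v''}})$ at each node: this splits the single absorbing factor into one summand per branch, after which an induction on the depth of $\mathcal{T}_{\mathcal{V}'}$ goes through. You would need this (or an equivalent character-by-character cyclotomic argument) to complete your plan; your one-variable reduction, while correct, does not by itself supply it.
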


\begin{proof}
\noindent
(\ref{Lm-5i}) Let $d_{\mathcal{V}',v'} = \max \{ d(v,v') : v\in \mathcal{V}' \}$, where $d(u,v)$ is the length of the minimal path between vertices $u$ and  $v$ in  $\mathcal{T}_{\mathcal{V}'}$. We proceed by induction on $d_{\mathcal{V}',v'}$. 

If $d_{\mathcal{V}',v'}=0$ then $\mathcal{V}'=\{v'\}$ and $v'$ is an end vertex of $\mathcal{T}$. 
Since $\mathcal{V}'\subseteq \mathcal{V} \setminus \mathcal{J}$, by the relation $A_{v'v'}E^{*}_{v'} - E_{v''}^{*} = E_{v'}$ 
comming from (\ref{eq:rel}) the expression (\ref{Eq-28}) becomes 
$$
\frac{ 1 - h_{v''}\mathbf{t}_{\mathcal{J}}^{E^{*}_{v''}} }{ 1- h_{v'}\mathbf{t}_{\mathcal{J}}^{E^{*}_{v'}} } 
=
\frac{ 1 -  h_{v'}^{A_{v'v'}}\mathbf{t}_{\mathcal{J}}^{A_{v'v'}E^{*}_{v'}} }{ 1 - h_{v'} \mathbf{t}_{\mathcal{J}}^{E^{*}_{v'}} } 
=
\sum_{k=0}^{A_{v'v'}-1}
h_{v'}^{k} \mathbf{t}_{\mathcal{J}}^{ k E^{*}_{v'}},
$$
which is a polynomial in $\mathbb{Z}[H][\pi_{\mathcal{J}}(L')]$. 

If $d_{\mathcal{V}',v'}=1$  then $\mathcal{T}_{\mathcal{V}'}$ is star-shaped with node $v'$ and we have $\delta_{v',\mathcal{V}'}=|\mathcal{V}'|-1$. That is, if $\mathcal{V}' = \{v'=v_{0}, v_{1}, \ldots, v_{s}\}$ then $v_{1},\ldots,v_{s}\in \mathcal{E}$ and 
$\delta_{v_{0},\mathcal{V}'} = \delta_{v_{0},\mathcal{V}}-1 = s$.  Moreover, we have relations
\begin{gather}
A_{v_{i}v_{i}} \pi_{\mathcal{J}}( E^{*}_{v_{i}} ) = \pi_{\mathcal{J}}( E^{*}_{v_{0}} ), \qquad i=1,\ldots,s,
\label{Eq-29a}
\notag
\\
A_{v_{0}v_{0}} \pi_{\mathcal{J}}( E^{*}_{v_{0}} ) - \pi_{\mathcal{J}}( E^{*}_{v_{1}} ) - \ldots - \pi_{\mathcal{J}}( E^{*}_{v_{s}} ) = \pi_{\mathcal{J}}( E^{*}_{v''}),
\label{Eq-29b}
\end{gather}
and similarly in $H$ we have
\begin{equation}\label{Eq-45b}
 h_{v_{i}}^{A_{v_{i}v_{i}}} =   h_{v_{0}}, \qquad i=1,\ldots,s,
\qquad\quad \textnormal{and} \qquad\quad
 h_{v_{0}}^{A_{v_{0}v_{0}}}\cdot h_{v_{1}}^{-1} \cdots h_{v_{s}}^{-1}
 = 
 h_{v''}.
\end{equation}
Note that in general one has the following decomposition
\begin{equation}\label{Eq-30}
1 - h_{0}\cdots h_{s}\mathbf{t}^{x_{0} + \ldots + x_{s}} 
=
\sum_{\emptyset \neq S \subset \{0, \ldots, s\}} (-1)^{|S|-1} \prod_{i\in \mathcal{S}} (1 - h_{i}\mathbf{t}^{x_{i}}) = \sum_{i=0}^{s} q_{i} (1- h_{i}\mathbf{t}^{x_{i}}),
\end{equation}
where $q_{i} \in  \mathbb{Z}[H][\mathbf{t}^{x_{i}}:i=0,\ldots,s]$. Hence, together with (\ref{Eq-29b}) and (\ref{Eq-45b}) imply
\begin{gather*}
1 - h_{v''}\mathbf{t}_{\mathcal{J}}^{E^{*}_{v''}} 
=   
q_{0} \Big( 1 - h_{v_{0}}^{A_{v_{0}v_{0}}}\mathbf{t}_{\mathcal{J}}^{A_{v_{0}v_{0}}E^{*}_{v_{0}}} \Big) 
+ 
\sum_{i=1}^{s} q_{i} \Big( 1 - h_{v_{i}}^{-1}\mathbf{t}_{\mathcal{J}}^{-E^{*}_{v_{i}}} \Big)
=
\sum_{i=0}^{s} p_{i} \Big( 1 - h_{v_{i}} \mathbf{t}_{\mathcal{J}}^{E^{*}_{v_{i}}} \Big),
\end{gather*}
where $p_{0} = q_{0}  \sum_{k=0}^{A_{v_{0}v_{0}}-1} h_{v_{0}}^{k}\mathbf{t}_{\mathcal{J}}^{kE^{*}_{v_{0}}} $ and $p_{i} =  - q_{i} h_{v_{i}}^{-1}\mathbf{t}_{\mathcal{J}}^{-E^{*}_{v_{i}}} $, $i=1,\ldots,s$ are Laurent polynomials.
Then (\ref{Eq-28}) becomes
\begin{gather*}
\frac{ \Big( 1 - h_{v''}\mathbf{t}_{\mathcal{J}}^{E^{*}_{v''}} \Big) \Big( 1 - h_{v_{0}}\mathbf{t}_{\mathcal{J}}^{E^{*}_{v_{0}}}   \Big)^{s-1} }{ \prod\limits_{i=1}^{s}  \Big( 1 - h_{v_{i}}\mathbf{t}_{\mathcal{J}}^{E^{*}_{v_{i}}} \Big)  }
=
p_{0} \frac{  \Big( 1 - h_{v_{0}}\mathbf{t}_{\mathcal{J}}^{E^{*}_{v_{0}}} \Big)^{s}  }{ \prod\limits_{l=1}^{s} \Big( 1 - h_{v_{l}}\mathbf{t}_{\mathcal{J}}^{E^{*}_{v_{l}}} \Big) }
+
\sum_{i=1}^{s} p_{i} \frac{  \Big( 1 - h_{v_{0}}\mathbf{t}_{\mathcal{J}}^{E^{*}_{v_{0}}} \Big)^{s-1}  }{ \prod\limits_{\substack{l=1\\ l\neq i}}^{s} \Big(  1 - h_{v_{l}} \mathbf{t}_{\mathcal{J}}^{E^{*}_{v_{l}}}  \Big) } 
\\
=
p_{0} \prod_{l=1}^{s} \sum_{k_{l}=0}^{A_{v_{l}v_{l}}-1} h_{v_{l}}^{k_{l}}\mathbf{t}_{\mathcal{J}}^{k_{l} E^{*}_{v_{l}}} 
+
\sum_{i=1}^{s} p_{i} \prod_{\substack{l=1\\l\neq i}}^{s} 
\sum_{k_{l}=0}^{A_{v_{l}v_{l}}-1} h_{v_{l}}^{k_{l}}\mathbf{t}_{\mathcal{J}}^{k_{l} E^{*}_{v_{l}}}, 
\end{gather*}
which is a Laurent polynomial with coefficents in $\mathbb{Z}[H]$.

In general, we have decomposition $\mathcal{V}' = \{v_{0}\} \uplus \mathcal{V}'_{1} \uplus \ldots \uplus \mathcal{V}'_{s}$ such that $\mathcal{T}_{\mathcal{V}'\setminus \{v_{0}\}} = \mathcal{T}_{\mathcal{V}'_{1}} \uplus \ldots \uplus \mathcal{T}_{\mathcal{V}'_{s}}$ is disjoint union of trees, and let $v_{i}\in \mathcal{V}'_{i}$, $i=1,\ldots,s$ be the neighboring vertices of $v_{0}$ in $\mathcal{T}_{\mathcal{V}'}$. We introduce notation $\phi_{\mathcal{V}'_{i}} = \prod_{u\in \mathcal{V}'_{i}} \big( 1 - h_{u}\mathbf{t}_{\mathcal{J}}^{E^{*}_{u}} \big)^{\delta_{u,\mathcal{V}}-2}$. Then by (\ref{Eq-30}) there are Laurent polynomials $p_{i}$ such that 
\begin{gather*}
\Big( 1 - h_{v''}\mathbf{t}_{\mathcal{J}}^{E^{*}_{v''}} \Big) \prod_{v\in \mathcal{V}'} \Big( 1 - h_{v} \mathbf{t}_{\mathcal{J}}^{E^{*}_{v}} \Big)^{\delta_{v,\mathcal{V}}-2} 
=
\Big( 1 - h_{v''}\mathbf{t}_{\mathcal{J}}^{E^{*}_{v''}} \Big) \cdot \Big( 1 - h_{v_{0}}\mathbf{t}_{\mathcal{J}}^{E^{*}_{v_{0}}} \Big)^{s-1} \prod_{i=1}^{s}  \phi_{\mathcal{V}'_{i}}
\\
=
\left( \sum_{i=0}^{s} p_{i} \Big( 1 - h_{v_{i}}\mathbf{t}_{\mathcal{J}}^{E^{*}_{v_{i }}} \Big)
\right) \cdot \Big( 1 - h_{v_{0}}\mathbf{t}_{\mathcal{J}}^{E^{*}_{v_{0}}}  \Big)^{s-1} \prod_{i=1}^{s} \phi_{\mathcal{V}'_{i}}
\\
=
p_{0} \prod_{i=1}^{s} \Big( 1 - h_{v_{0}}\mathbf{t}_{\mathcal{J}}^{E^{*}_{v_{0}}} \Big) \phi_{\mathcal{V}'_{i}}
+ 
\sum_{i=1}^{s} p_{i} \cdot \Big( 1 - h_{v_{i}}\mathbf{t}_{\mathcal{J}}^{E^{*}_{v_{i}}} \Big) 
\phi_{\mathcal{V}'_{i}}  \cdot 
\prod_{\substack{l=1\\ l\neq i}}^{s}  \Big( 1 - h_{v_{0}}\mathbf{t}_{\mathcal{J}}^{E^{*}_{v_{0}}} \Big) \phi_{\mathcal{V}'_{l}} .
\end{gather*}
Since $d_{\mathcal{V}'_{l},v_{l}} < d_{\mathcal{V}',v_{0}}$, by induction hypothesis $\big( 1 - h_{v_{0}}\mathbf{t}_{\mathcal{J}}^{E^{*}_{v_{0}}} \big) \phi_{\mathcal{V}'_{l}}$ is a Laurent polynomial.  
Finally, we have to show that the term $\big( 1 - h_{v_{i}}\mathbf{t}_{\mathcal{J}}^{E^{*}_{v_{i}}} \big) \phi_{\mathcal{V}'_{i}} $ is also a Laurent polynomial. One can write $\mathcal{T}_{\mathcal{V}'_{i} \setminus \{v_{i}\}} = \mathcal{T}_{\mathcal{W}_{1}} \uplus \ldots \uplus \mathcal{T}_{\mathcal{W}_{r}}$ as disjoint union of trees and let $w_{k}$ be the (unique) neighboring vertex of $v_{i}$ such that $w_{k} \in \mathcal{W}_{k} $, $k=1,\ldots,r$.   Then
\begin{equation}\label{Eq-31}
\Big( 1 - h_{v_{i}}\mathbf{t}_{\mathcal{J}}^{E^{*}_{v_{i}}} \Big) \phi_{\mathcal{V}'_{i}} 
=
\prod_{k=1}^{r} \Big( 1 - h_{v_{i}}\mathbf{t}_{\mathcal{J}}^{E^{*}_{v_{i}}} \Big) \phi_{\mathcal{W}_{k}},
\end{equation}
and $d_{\mathcal{W}_{k},w_{k}} < d_{\mathcal{V}',v_{0}}$, hence (\ref{Eq-31}) is also a Laurent polynomial by induction. This completes the proof of (\ref{Lm-5i}).
\\[1ex]
\noindent (\ref{Lm-5ii})
The subgraph $\mathcal{T}_{\mathcal{V}\setminus \mathcal{J}}$ decomposes into several trees $\mathcal{T}_{\mathcal{J}_{1}},\ldots, \mathcal{T}_{\mathcal{J}_{r}}$ such that $\mathcal{J}_{1} \uplus \ldots \uplus\mathcal{J}_{r} = \mathcal{V} \setminus \mathcal{J}$. For all $i=1,\ldots,r$ denote $j_{i} \in \mathcal{J}$ the unique vertex such that there is an edge in $\mathcal{T}$ from the vertex $j_{i}$ to a vertex in $\mathcal{J}_{i}$ (it is possible that $j_{i}=j_{k}$ for some $i\neq k$). Then
$$
\prod_{v\in \mathcal{V}} \Big( 1- h_{v}\mathbf{t}_{\mathcal{J}}^{E^{*}_{v}} \Big)^{\delta_{v,\mathcal{V}} - 2}
=
\prod_{j\in \mathcal{J}} \Big( 1 - h_{v_{j}}\mathbf{t}_{\mathcal{J}}^{E^{*}_{v_{j}}} \Big)^{\delta_{j,\mathcal{J}}-2} 
\cdot 
\prod_{k=1}^{r} \Big( 1 - h_{j_{k}}\mathbf{t}_{\mathcal{J}}^{E^{*}_{j_{k}}} \Big) 
\prod_{u\in \mathcal{J}_{k}} \Big( 1 - h_{u}\mathbf{t}_{\mathcal{J}}^{E^{*}_{u}} \Big)^{\delta_{u,\mathcal{V}}-2}, 
$$
and by (\ref{Lm-5i}) each term $\big( 1 - h_{j_{k}} \mathbf{t}_{\mathcal{J}}^{E^{*}_{j_{k}}} \big) \prod_{u \in \mathcal{J}_{k}} \big( 1 - h_{u}\mathbf{t}_{\mathcal{J}}^{E^{*}_{u}}  \big)^{\delta_{u,\mathcal{V}}-2}$ is a Laurent polynomial. Note that 
$$
\prod_{k=1}^{r} \Big( 1 - h_{j_{k}}\mathbf{t}_{\mathcal{J}}^{E^{*}_{j_{k}}} \Big) 
\prod_{u\in \mathcal{J}_{k}} \Big( 1 - h_{u}\mathbf{t}_{\mathcal{J}}^{E^{*}_{u}} \Big)^{\delta_{u,\mathcal{V}}-2}
=
\prod_{u\in \mathcal{V}\setminus \mathcal{J}} \Big(1 - h_{u} \mathbf{t}^{E^{*}_{u}}_{\mathcal{J}} \Big)^{\delta_{u,\mathcal{V}}-2} 
\prod_{j\in \mathcal{J}} \Big( 1 - h_{j} \mathbf{t}^{E^{*}_{j}}_{\mathcal{J}} \Big)^{\delta_{j,\mathcal{V}} - \delta_{j,\mathcal{J}}}.
$$
\\[1ex]
\noindent (\ref{Lm-5iii})
For $\mathcal{J}=\mathcal{V}$ we choose $\widetilde{A}_{\mathcal{V}} = A$.
Let $i\in \mathcal{J}$ be an end vertex with neighboring vertex  $j$ and denote $\mathcal{J}' = \mathcal{J}\setminus\{i\}$. Then $\pi_{\mathcal{J}'}$ is the composition of $\pi_{\mathcal{J}}$ and $\pi_{\overline{ij}}$,  the projection along $\mathbb{R}E_{i}$. The latter projection corresponds to removal of edge $\overline{ij}$ from $\mathcal{T}_{\mathcal{J}}$, hence by \cite[Section 21]{EN} we get that $\left[ \pi_{\mathcal{J'}}(E^{*}_{v}) \right]_{v\in \mathcal{J}'} \cdot \widetilde{A}_{\mathcal{J'}} = [E_{v}]_{v\in \mathcal{J}'}$, where $(\widetilde{A}_{\mathcal{J}'})_{kl} = (\widetilde{A}_{\mathcal{J}})_{kl}$ for all $k,l\in \mathcal{J}'$, except $(\widetilde{A}_{\mathcal{J}'})_{jj} = (\widetilde{A}_{\mathcal{J}})_{jj} - 1/(\widetilde{A}_{\mathcal{J}})_{ii}$. Moreover, $\widetilde{A}_{\mathcal{J}'}$ is positive definite and it is associated with $\mathcal{T}_{\mathcal{J}'}$. 
Finally, if $\mathcal{J}\subseteq \mathcal{V}$ then we can realize $\pi_{\mathcal{J}}$ as composition of projections $\pi_{\overline{ij}}$ corresponding to successive removal of edges not in $\mathcal{T}_{\mathcal{J}}$, and which are end edges of the respective intermediate trees.  
\end{proof}

\subsection{Uniqueness theorem}\label{ss:unique}
\begin{thm}\label{Thm-A}
There exists a unique $\mathbb{Z}[H]$-valued quasipolynomial $\mathcal{C}_{H}=\sum_{h\in H} \mathcal{C}_{h} \cdot h$ on the lattice $L'$ (i.e. all equivariant parts $\mathcal{C}_{h}$ are quasipolynomials) with property $\mathcal{C}_{H}(x) = C_{H}(x)$ for all $x\in \sum_{v\in \mathcal{V}} (\delta_{v}-2) E^{*}_{v}+\textnormal{int}\, (\mathcal{S}')$. In particular, there are unique quasipolynomials $\mathcal{C}$ and $\mathcal{L}_{h}$ on $L'$ such that
$$
\mathcal{C}(x) = C(x)\quad and \quad \mathcal{L}_{h}(x)=Q_{h}(x) \quad\qquad for\ all\ x\in \sum_{v\in \mathcal{V}} (\delta_{v}-2) E^{*}_{v} + \textnormal{int}(\mathcal{S}').
$$ 
\end{thm}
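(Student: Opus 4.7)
\emph{Uniqueness.} Any $\mathbb{Z}[H]$-valued quasipolynomial on $L'$ is determined by its values on a single translate of a full-rank sublattice. Since $\sum_{v}(\delta_{v}-2)E^{*}_{v} + \textnormal{int}(\mathcal{S}')$ is an open cone in $L'\otimes \mathbb{R}$, it contains such a translate; hence any two quasipolynomials agreeing there must coincide on all of $L'$, which gives the uniqueness of $\mathcal{C}_{H}$, $\mathcal{C}$, and $\mathcal{L}_{h}$.

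\emph{Existence.} The plan is to show that $C_{H}$ restricted to the affine subcone is itself given by a single quasipolynomial. Starting from (\ref{Eq-40}), for each non-empty $\mathcal{I}\subseteq \mathcal{V}$ set $g_{\mathcal{I},H}(\mathbf{t}_{\mathcal{I}}) := f_{H}(\mathbf{t}_{\mathcal{I}})\prod_{i\in \mathcal{I}}\frac{\mathbf{t}_{\mathcal{I}}^{E_{i}}}{1-\mathbf{t}_{\mathcal{I}}^{E_{i}}}$ and analyze the summand $\Coeff\bigl(\textnormal{T}[g_{\mathcal{I},H}],\mathbf{t}_{\mathcal{I}}^{x}\bigr)$. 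First I would apply Lemma~\ref{Lm-5}(ii) (or (i) iterated over the connected components of $\mathcal{T}_{\mathcal{I}}$ when $\mathcal{I}$ does not induce a subtree) to factor
\[
f_{H}(\mathbf{t}_{\mathcal{I}}) \;=\; P_{\mathcal{I}}(\mathbf{t}_{\mathcal{I}}) \cdot \prod_{i\in \mathcal{I}}\bigl(1-h_{i}\mathbf{t}_{\mathcal{I}}^{E^{*}_{i}}\bigr)^{\delta_{i,\mathcal{I}}-2},
\]
where $P_{\mathcal{I}}\in \mathbb{Z}[H][\pi_{\mathcal{I}}(L')]$ is a Laurent polynomial supported on $\pi_{\mathcal{I}}(\mathcal{S}')$. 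After this rewriting, the denominator of $g_{\mathcal{I},H}$ involves only vectors from the explicit family $\Psi_{\mathcal{I}}=\{\pi_{\mathcal{I}}(E^{*}_{\ell}):\ell\text{ an end of }\mathcal{T}_{\mathcal{I}}\}\cup \{E_{i}:i\in \mathcal{I}\}$.

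Next I would apply the Szenes--Vergne theorem (Theorem~\ref{Thm-SzV} in the equivariant form of Remark~\ref{Rk-4}) to $g_{\mathcal{I},H}$ on $V_{\mathcal{I}}$ with lattice $\pi_{\mathcal{I}}(L')$. The arrangement $\Psi_{\mathcal{I}}$ subdivides $V_{\mathcal{I}}$ into big chambers; by Lemma~\ref{Lm-5}(iii) the positive-definite matrix $\widetilde{A}_{\mathcal{I}}$ (with strictly positive inverse entries) relates the bases $\{\pi_{\mathcal{I}}(E^{*}_{i})\}_{i\in \mathcal{I}}$ and $\{E_{i}\}_{i\in \mathcal{I}}$, so $\pi_{\mathcal{I}}(\mathcal{S}'_{\mathbb{R}})$ is a simplicial subcone of $\mathbb{R}_{\geq 0}\langle E_{i}\rangle_{i\in \mathcal{I}}$ sitting entirely inside a \emph{single} big chamber $\mathfrak{c}_{\mathcal{I}}$ of $\Psi_{\mathcal{I}}$. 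The Szenes--Vergne theorem then produces a quasipolynomial $\mathcal{Q}_{\mathcal{I},H}^{\mathfrak{c}_{\mathcal{I}}}$ representing $\Coeff(\textnormal{T}[g_{\mathcal{I},H}],\mathbf{t}_{\mathcal{I}}^{x})$ on the full domain $\Lambda^{\mathfrak{c}_{\mathcal{I}}}(g_{\mathcal{I},H})$ described by (\ref{Eq-BigLambdaf}).

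The last step is to verify that for every $x\in \sum_{v}(\delta_{v}-2)E^{*}_{v}+\textnormal{int}(\mathcal{S}')$ and every $\mathcal{I}$, the projection $\pi_{\mathcal{I}}(x)$ lies in $\Lambda^{\mathfrak{c}_{\mathcal{I}}}(g_{\mathcal{I},H})$. Because $P_{\mathcal{I}}$ is supported on $\pi_{\mathcal{I}}(\mathcal{S}')$ and the remaining numerator contribution $\prod_{i}\mathbf{t}_{\mathcal{I}}^{E_{i}}$ (together with positive powers $(1-h_{i}\mathbf{t}_{\mathcal{I}}^{E^{*}_{i}})^{\delta_{i,\mathcal{I}}-2}$ for nodes of $\mathcal{T}_{\mathcal{I}}$) produces a bounded translate, the shift $\sum_{v}(\delta_{v}-2)E^{*}_{v}$ combined with strict membership in $\textnormal{int}(\mathcal{S}')$ pushes $\pi_{\mathcal{I}}(x)$ past all numerator shifts and the box $\square(\Psi_{\mathcal{I}})$. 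Granted this, each summand in (\ref{Eq-40}) is given on the whole affine subcone by the single quasipolynomial $\mathcal{Q}_{\mathcal{I},H}^{\mathfrak{c}_{\mathcal{I}}}\!\circ\pi_{\mathcal{I}}$, and the alternating sum
\[
\mathcal{C}_{H}(x) \;:=\; \sum_{\emptyset\neq \mathcal{I}\subseteq \mathcal{V}}(-1)^{|\mathcal{I}|-1}\,\mathcal{Q}_{\mathcal{I},H}^{\mathfrak{c}_{\mathcal{I}}}(\pi_{\mathcal{I}}(x))
\]
is a $\mathbb{Z}[H]$-valued quasipolynomial on $L'$ coinciding with $C_{H}$ there. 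The corresponding statements for $\mathcal{C}$ and $\mathcal{L}_{h}$ follow immediately from (\ref{Eq-50}) and (\ref{Eq-39}).

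The main obstacle is the last bookkeeping step: checking uniformly in $\mathcal{I}$ that the shift $\sum_{v}(\delta_{v}-2)E^{*}_{v}$ is precisely what is needed to absorb the support of $P_{\mathcal{I}}$, the factor $\prod_{i}\mathbf{t}_{\mathcal{I}}^{E_{i}}$, and the box $\square(\Psi_{\mathcal{I}})$. The key inputs here are the positivity of $\widetilde{A}_{\mathcal{I}}^{-1}$ (controlling the support of $P_{\mathcal{I}}\subseteq \pi_{\mathcal{I}}(\mathcal{S}')$) and the fact that the combinatorial coefficient $\delta_{v}-2$ matches the exponents appearing in the factorization (\ref{Eq-58}).
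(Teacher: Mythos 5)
Your plan follows the paper's own route: factor $f_{H}(\mathbf{t}_{\mathcal{I}})$ via Lemma \ref{Lm-5}(\ref{Lm-5ii}) so that the denominator is controlled by the configuration $\Psi_{\mathcal{I}}$, apply Szenes--Vergne (Theorem \ref{Thm-SzV} in the equivariant form of Remark \ref{Rk-4}) on a big chamber containing the projected Lipman cone, and verify that the affine subcone $\sum_{v}(\delta_{v}-2)E^{*}_{v}+\textnormal{int}(\mathcal{S}')$ projects into the validity domain (\ref{Eq-BigLambdaf}); this is exactly the reduction to Proposition \ref{prop:uni}. Your uniqueness argument is also the standard (correct) one.

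There is, however, a genuine gap at the crux: the assertion that $\textnormal{int}(\pi_{\mathcal{I}}(\mathcal{S}'_{\mathbb{R}}))$ sits inside a \emph{single} big chamber of $\Psi_{\mathcal{I}}$ does not follow from Lemma \ref{Lm-5}(\ref{Lm-5iii}) as you claim. Positivity of $\widetilde{A}_{\mathcal{I}}^{-1}$ only places the projected Lipman cone inside $\mathbb{R}_{\geq0}\langle E_{i}\rangle_{i\in\mathcal{I}}$, which is merely one of the cones $\mathbb{R}_{\geq0}\langle\sigma\rangle$ with $\sigma\in\mathfrak{B}(\Psi_{\mathcal{I}})$; the big chambers are the components of the complement of the boundaries of \emph{all} such cones, including those spanned by mixed bases containing some $\pi_{\mathcal{I}}(E^{*}_{\ell})$ and some $E_{i}$, and a priori a facet of such a mixed cone could cut through the projected Lipman cone. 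Ruling this out is precisely the content of the paper's Lemma \ref{Lm-6}, whose proof is a real argument (one projects along the $\pi_{\mathcal{I}}(E^{*}_{j})$'s appearing in a putative separating facet and uses the relations $\widetilde{A}_{ii}E^{*}_{i}-E^{*}_{i'}=E_{i}$ to trap that facet in a closed half-space disjoint from the open projected Lipman cone). Without this, Szenes--Vergne only yields possibly different quasipolynomials on the different chambers meeting $\pi_{\mathcal{I}}(\mathcal{S}'_{\mathbb{R}})$, and the theorem fails. Two smaller deferred points: the ``bookkeeping'' you postpone is where the specific shift $\sum_{v}(\delta_{v}-2)E^{*}_{v}$ is actually justified, and the paper does it not with the simplified numerator $P_{\mathcal{J}}$ of (\ref{Eq-58}) (whose support is not explicit enough) but with the original presentation, whose numerator monomials are exactly $\sum_{w\in\mathcal{N}}k_{w}E^{*}_{w}-q+\sum_{i}E_{i}$ with $0\leq k_{w}\leq\delta_{w}-2$; and one must carry the extra factor $\mathbf{t}_{\mathcal{I}}^{-q}$ for all $q\in L'\cap\sum_{v}[0,1)E_{v}$ (as in Proposition \ref{prop:uni}) before the statements for $\mathcal{C}$ and $\mathcal{L}_{h}$ genuinely follow from (\ref{Eq-50}) and (\ref{Eq-39}).
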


Since (\ref{Eq-50}) and (\ref{Eq-39}) imply that 
$$
\mathcal{C}(x) = \frac{1}{|H|}\sum_{\rho \in \hat{H}} \rho([x]^{-1}\mathcal{C}_{H}(x)),
$$ 
$\mathcal{L}_{h}(x) = \mathcal{C}(x)$ and $\mathcal{L}_{h}(x-q) = \mathcal{C}(x)$ for $x\in h+L$ and all $q\in L'\cap \sum_{v\in \mathcal{V}}[0,1)E_{v}$, it is enough to prove the theorem for $\mathcal{C}_{H}$. More precisely, the theorem follows from

\begin{prop}\label{prop:uni}
For all non-empty vertex set $\mathcal{I} \subseteq \mathcal{V}$ the function 
\begin{equation}\label{Eq-57}
x  \mapsto \Coeff \bigg(   \textnormal{T} \bigg[ \mathbf{t}_{\mathcal{I}}^{-q} f_{H}(\mathbf{t}_{\mathcal{I}}) \prod_{i\in \mathcal{I}} \frac{\mathbf{t}_{\mathcal{I}}^{E_{i}}}{1- \mathbf{t}_{\mathcal{I}}^{E_{i}}} \bigg],  \mathbf{t}_{\mathcal{I}}^{x} \bigg)
\end{equation} 
is quasipolynomial on $\sum_{v\in \mathcal{V}} (\delta_{v}-2) E^{*}_{v} + \textnormal{int}(\mathcal{S}')$ for all $q\in L' \cap \sum_{v\in \mathcal{V}}[0,1)E_{v}$.
\end{prop}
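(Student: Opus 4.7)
The approach combines the Szenes--Vergne formula (Theorem \ref{Thm-SzV}, in the equivariant form of Remark \ref{Rk-4}) with the factorization of $f_H(\mathbf{t}_{\mathcal{I}})$ given by Lemma \ref{Lm-5}.

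First I would reshape the rational function in (\ref{Eq-57}). When $\mathcal{T}_{\mathcal{I}}$ is connected, Lemma \ref{Lm-5}\,(\ref{Lm-5ii}) directly provides the factorization
$$
\mathbf{t}_{\mathcal{I}}^{-q} f_H(\mathbf{t}_{\mathcal{I}}) \prod_{i \in \mathcal{I}}\frac{\mathbf{t}_{\mathcal{I}}^{E_i}}{1 - \mathbf{t}_{\mathcal{I}}^{E_i}}
=
\mathbf{t}_{\mathcal{I}}^{-q} P_{\mathcal{I}}(\mathbf{t}_{\mathcal{I}}) \prod_{j \in \mathcal{I}}(1 - h_j \mathbf{t}_{\mathcal{I}}^{E_j^*})^{\delta_{j,\mathcal{I}} - 2} \prod_{i \in \mathcal{I}} \frac{\mathbf{t}_{\mathcal{I}}^{E_i}}{1 - \mathbf{t}_{\mathcal{I}}^{E_i}},
$$
with $P_{\mathcal{I}}$ a Laurent polynomial supported on $\pi_{\mathcal{I}}(\mathcal{S}')$. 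For disconnected $\mathcal{T}_{\mathcal{I}}$ I would iterate Lemma \ref{Lm-5}\,(\ref{Lm-5i}) on the edges of $\mathcal{T}$ crossing from $\mathcal{I}$ to $\mathcal{V}\setminus\mathcal{I}$, producing an analogous factorization component by component. After expanding the polynomial factors $(1 - h_j \mathbf{t}_{\mathcal{I}}^{E_j^*})^{\delta_{j,\mathcal{I}} - 2}$ for nodes $j$ of $\mathcal{T}_{\mathcal{I}}$, the effective denominator configuration becomes
$$
\Psi_{\mathcal{I}}^{\mathrm{red}} = \{\pi_{\mathcal{I}}(E_j^*) : j\text{ an end of }\mathcal{T}_{\mathcal{I}}\} \cup \{E_i : i \in \mathcal{I}\},
$$
a set of vectors lying in the positive cone $\mathbb{R}_{\geq 0}\langle E_i\rangle_{i \in \mathcal{I}}$ by Lemma \ref{Lm-5}\,(\ref{Lm-5iii}).

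Next I would apply Szenes--Vergne to the reshaped rational function: on each big chamber $\mathfrak{c}$ of $\Psi_{\mathcal{I}}^{\mathrm{red}}$, it yields a quasipolynomial $\mathcal{Q}^{\mathfrak{c}}$ agreeing with the coefficient function (\ref{Eq-57}) on $\Lambda^{\mathfrak{c}}(f) \cap \pi_{\mathcal{I}}(L')$. The central remaining task is then to show that on the projected subcone $\pi_{\mathcal{I}}\bigl(\sum_v (\delta_v - 2) E_v^* + \textnormal{int}(\mathcal{S}')\bigr)$ all these $\mathcal{Q}^{\mathfrak{c}}$ coincide. I would prove this by establishing the inclusion of the subcone inside $\Lambda^{\mathfrak{c}}(f)$ for every $\mathfrak{c}$ meeting it: tracking the numerator contributions from $P_{\mathcal{I}}$ (governed by the factors with $v\in\mathcal{V}\setminus\mathcal{I}$), from the polynomial expansions at the nodes of $\mathcal{T}_{\mathcal{I}}$ (bounded by $\sum_{j}(\delta_{j,\mathcal{I}}-2)E_j^*$), from the shift $-q$ with $q \in L' \cap \sum_v [0,1)E_v$, and from the box $\square(\Psi_{\mathcal{I}}^{\mathrm{red}})$, one verifies via formula (\ref{Eq-BigLambdaf}) that their cumulative shift from the Lipman cone is dominated precisely by $\sum_v(\delta_v-2)E_v^*$. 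Once this uniform inclusion is in place, the intrinsically defined Taylor coefficient function forces the various $\mathcal{Q}^{\mathfrak{c}}$ to agree on the subcone, and any quasipolynomial on a full-dimensional subset of $\pi_{\mathcal{I}}(L')$ extends uniquely.

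The main obstacle is precisely this uniform bookkeeping step. The shift $\sum_v(\delta_v-2)E_v^*$ is visibly the total degree contribution from treating each node factor as a polynomial of top degree $(\delta_v-2)E_v^*$ and each end-factor denominator as contributing a box $[0,1]E_v^*$; this is why it is exactly the threshold needed to push into the quasipolynomial regime on every relevant chamber at once. The disconnected case will require some extra care to check that the iterated application of Lemma \ref{Lm-5}\,(\ref{Lm-5i}) keeps the auxiliary polynomial's support bounded by the same total, so that the $\Lambda^{\mathfrak{c}}(f)$-inclusion persists uniformly.
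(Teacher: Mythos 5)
Your overall strategy (factor $f_{H}(\mathbf{t}_{\mathcal{I}})$ via Lemma \ref{Lm-5}, feed the reduced denominator configuration into Theorem \ref{Thm-SzV}, then do affine bookkeeping to land inside $\Lambda^{\mathfrak{c}}$) is the paper's, but there is a genuine gap at the step you call the ``central remaining task''. The set $\Lambda^{\mathfrak{c}}(f)=\bigcap_{\xi}(\xi+\mathfrak{c}-\square(\beta))\cap\Gamma$ has recession cone $\overline{\mathfrak{c}}$, whereas the shifted subcone $\pi_{\mathcal{I}}\bigl(\sum_{v}(\delta_{v}-2)E_{v}^{*}+\textnormal{int}(\mathcal{S}')\bigr)$ has recession cone the full projected Lipman cone. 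Hence the inclusion you want can hold only for a chamber $\mathfrak{c}$ with $\textnormal{int}(\pi_{\mathcal{I}}(\mathcal{S}'_{\mathbb{R}}))\subseteq\overline{\mathfrak{c}}$, and since big chambers are disjoint, your claim that the subcone lies in $\Lambda^{\mathfrak{c}}(f)$ ``for every $\mathfrak{c}$ meeting it'' is equivalent to the assertion that the projected open Lipman cone sits entirely inside a \emph{single} big chamber of $\Psi_{\mathcal{I}}$. That assertion is the crux of the whole uniqueness phenomenon and cannot be extracted from the numerator/box bookkeeping, which controls only translations and thickenings, never the conical directions. The paper isolates it as Lemma \ref{Lm-6} and proves it by a separating-hyperplane argument resting on Lemma \ref{Lm-5}(\ref{Lm-5iii}) and the positivity of the coefficients of the projected duals $\pi'(E^{*}_{j})$ in the $E$-basis. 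You identify the ``main obstacle'' as verifying the affine shift $\sum_{v}(\delta_{v}-2)E_{v}^{*}$; that is in fact the routine part, and the missing geometric input is the single-chamber statement.

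A secondary inaccuracy concerns the disconnected case. The correct denominator configuration after simplification is $\{\pi_{\mathcal{I}}(E_{j}^{*}): j\in\mathcal{E}_{\mathcal{J}}\}\cup\{E_{i}\}_{i\in\mathcal{I}}$, where $\mathcal{J}$ is the minimal subtree of $\mathcal{T}$ containing $\mathcal{I}$ and $\mathcal{E}_{\mathcal{J}}$ is its set of ends --- not the set of ends of the components of $\mathcal{T}_{\mathcal{I}}$. For instance, on a path $a-b-c-d$ with $\mathcal{I}=\{a,b,d\}$, the vertex $b$ is an end of a component of $\mathcal{T}_{\mathcal{I}}$, yet $\pi_{\mathcal{I}}(E_{b}^{*})$ does not survive in the denominator because $\delta_{b,\mathcal{J}}=2$. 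Moreover, Lemma \ref{Lm-5}(\ref{Lm-5i}) requires the surviving index set to lie entirely in one component $\mathcal{V}''$ of $\mathcal{T}\setminus\overline{v'v''}$, so it cannot be iterated ``component by component'' over the edges crossing from $\mathcal{I}$ to $\mathcal{V}\setminus\mathcal{I}$ when $\mathcal{I}$ meets both sides of such an edge; the paper instead applies Lemma \ref{Lm-5}(\ref{Lm-5ii}) to the spanning subtree $\mathcal{T}_{\mathcal{J}}$ and only afterwards projects further to $V_{\mathcal{I}}$.
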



In the proof of the proposition we will use the following lemma.

\begin{lemma}\label{Lm-6}
Let $\mathcal{J}\subseteq \mathcal{V}$ be such that $\mathcal{T}_{\mathcal{J}}$ is a subtree of $\mathcal{T}$ and let $\mathcal{I} \subseteq \mathcal{J}$ be such that $\mathcal{I}$ contains the set of end vertices $\mathcal{E}_{\mathcal{J}}$ of $\mathcal{T}_{\mathcal{J}}$. If $\Psi_{\mathcal{I}}= \{  \pi_{\mathcal{I}}(E^{*}_{j}),E_{i} :  j\in \mathcal{E}_{\mathcal{J}}, i\in \mathcal{I}\}$ then for any $\sigma \in \mathfrak{B}(\Psi_{\mathcal{I}})$ we have either $\mathbb{R}_{>0}\langle \pi_{\mathcal{I}}(E^{*}_{j})\rangle_{j\in \mathcal{J}} \cap \mathbb{R}_{>0}\langle \sigma \rangle = \emptyset$ or $\mathbb{R}_{>0}\langle \pi_{\mathcal{I}}(E^{*}_{j})\rangle_{j\in \mathcal{J}} \subset  \mathbb{R}_{>0}\langle \sigma \rangle$. That is, the projected Lipman cone $\textnormal{int}\,  (\pi_{\mathcal{I}}(\mathcal{S}'_{\mathbb{R}})) = \mathbb{R}_{>0} \langle \pi_{\mathcal{I}}(E^{*}_{j}) \rangle_{j\in \mathcal{J}}$ is contained entirely in a big chamber of the configuration $\Psi_{\mathcal{I}}$.
\end{lemma}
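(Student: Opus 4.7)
The plan is to show that the interior $\mathbb{R}_{>0}\langle \pi_{\mathcal{I}}(E_j^*)\rangle_{j\in \mathcal{J}}$ of the projected Lipman cone is disjoint from every facet hyperplane $\partial \mathbb{R}_{\geq 0}\langle \sigma\rangle$ of a simplicial cone with $\sigma\in \mathfrak{B}(\Psi_{\mathcal{I}})$. Since this interior is open, connected and convex, that forces it into a single connected component of the complement of the hyperplane arrangement, i.e., into a single big chamber, yielding the dichotomy claimed in the lemma.

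I would begin by writing explicit coordinates for the generators. Restricting the identity in Lemma \ref{Lm-5}(\ref{Lm-5iii}) from $V_{\mathcal{J}}$ to $V_{\mathcal{I}}$ (simply forgetting coordinates outside $\mathcal{I}$) yields
\begin{equation*}
\pi_{\mathcal{I}}(E_j^*) \;=\; \sum_{i\in \mathcal{I}} (\widetilde{A}_{\mathcal{J}}^{-1})_{ij}\, E_i \qquad (j\in \mathcal{J}).
\end{equation*}
Because $\widetilde{A}_{\mathcal{J}}$ is positive definite with positive diagonal and nonpositive off-diagonal entries supported on edges of the connected tree $\mathcal{T}_{\mathcal{J}}$, $\widetilde{A}_{\mathcal{J}}^{-1}$ has strictly positive entries. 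Hence each $\pi_{\mathcal{I}}(E_j^*)$ sits in the open first orthant of $V_{\mathcal{I}}$, and an analogous leaf-peeling along the components of $\mathcal{T}_{\mathcal{V}\setminus \mathcal{J}}$ expresses $\pi_{\mathcal{I}}(E_v^*)$ for $v\in \mathcal{V}\setminus \mathcal{J}$ as a nonnegative combination of $\{\pi_{\mathcal{I}}(E_j^*)\}_{j\in \mathcal{J}}$; consequently $\pi_{\mathcal{I}}(\mathcal{S}'_{\mathbb{R}})=\mathbb{R}_{\geq 0}\langle \pi_{\mathcal{I}}(E_j^*)\rangle_{j\in \mathcal{J}}$ as asserted.

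The core of the proof is then a sign analysis. Any facet hyperplane has the form $H=\mathrm{span}(\tau)$ with $\tau=\{E_i\}_{i\in \mathcal{I}_0}\cup\{\pi_{\mathcal{I}}(E_j^*)\}_{j\in \mathcal{J}_0}\subset \Psi_{\mathcal{I}}$, $\mathcal{J}_0\subseteq \mathcal{E}_{\mathcal{J}}$, $|\mathcal{I}_0|+|\mathcal{J}_0|=|\mathcal{I}|-1$. Its defining functional has the form $\ell=\sum_{i\in \mathcal{I}\setminus \mathcal{I}_0} c_i E_i^{\vee}$ and annihilates $\pi_{\mathcal{I}}(E_j^*)$ for $j\in \mathcal{J}_0$. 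I would prove, by induction on $|\mathcal{J}|$, that $\ell(\pi_{\mathcal{I}}(E_j^*))$ has constant sign (or is zero) as $j$ ranges over all of $\mathcal{J}$. The inductive step peels off a leaf $j_0\in \mathcal{E}_{\mathcal{J}}$ (which lies in $\mathcal{I}$ by assumption) with unique neighbor $j_0'\in \mathcal{J}$, using the leaf relation
\begin{equation*}
\pi_{\mathcal{I}}(E_{j_0'}^*) \;=\; (\widetilde{A}_{\mathcal{J}})_{j_0j_0}\,\pi_{\mathcal{I}}(E_{j_0}^*) \;-\; E_{j_0}
\end{equation*}
extracted from the $j_0$-row of $\widetilde{A}_{\mathcal{J}}$. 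Applying $\ell$ transports $\ell(\pi_{\mathcal{I}}(E_{j_0}^*))$ to $\ell(\pi_{\mathcal{I}}(E_{j_0'}^*))$ up to the contribution $-c_{j_0}$ (which vanishes when $j_0\in \mathcal{I}_0$), and iterating across the tree handles every $j\in \mathcal{J}$.

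The main obstacle I expect is precisely this sign-propagation step: the interplay between whether the peeled leaf $j_0$ lies in $\mathcal{J}_0$ and whether it lies in $\mathcal{I}_0$ splits the induction into several subcases, and the consistency of the resulting signs across the branches of $\mathcal{T}_{\mathcal{J}}$ has to be tied to a positivity property of certain minors of $\widetilde{A}_{\mathcal{J}}^{-1}$ indexed by subpaths of the tree. The other ingredients—the coordinate formula, the Stieltjes-matrix positivity of $\widetilde{A}_{\mathcal{J}}^{-1}$, and the reduction from $\mathcal{V}$ to $\mathcal{J}$—are structural by comparison.
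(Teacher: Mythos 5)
Your setup (coordinates of $\pi_{\mathcal{I}}(E^{*}_{j})$ via $\widetilde{A}_{\mathcal{J}}^{-1}$, positivity of its entries, identification of $\pi_{\mathcal{I}}(\mathcal{S}'_{\mathbb{R}})$ with the cone over $\{\pi_{\mathcal{I}}(E^{*}_{j})\}_{j\in\mathcal{J}}$, and the reduction to showing that no facet of any $\mathbb{R}_{\geq0}\langle\sigma\rangle$ meets the open cone) is sound. But the heart of the argument --- that the defining functional $\ell$ of $\mathrm{span}(\tau)$ takes a constant weak sign on \emph{all} generators $\pi_{\mathcal{I}}(E^{*}_{j})$, $j\in\mathcal{J}$ --- is exactly what you do not prove, and you say so yourself. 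The leaf-peeling induction as sketched is not well-founded: after removing a leaf $j_{0}\in\mathcal{E}_{\mathcal{J}}$, the new end vertices of $\mathcal{T}_{\mathcal{J}\setminus j_{0}}$ need not lie in $\mathcal{I}$, so the lemma's hypotheses are not preserved; the functional $\ell$ is determined by the whole facet $\tau$ and does not restrict to the analogous functional for the smaller configuration; and when the peeled leaf lies in neither $\mathcal{I}_{0}$ nor $\mathcal{J}_{0}$ the relation $\pi_{\mathcal{I}}(E^{*}_{j_{0}'})=(\widetilde{A}_{\mathcal{J}})_{j_{0}j_{0}}\pi_{\mathcal{I}}(E^{*}_{j_{0}})-E_{j_{0}}$ involves two quantities of unknown sign and propagates nothing. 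The ``positivity of certain minors of $\widetilde{A}_{\mathcal{J}}^{-1}$'' you invoke is precisely the nontrivial content, and it is left unestablished.

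Note also that you are attempting a strictly stronger statement than the lemma needs: disjointness of the open Lipman cone from the full hyperplane $\mathrm{span}(\tau)$, rather than from the cone $\mathbb{R}_{\geq0}\langle\tau\rangle$. Big chambers are components of the complement of the union of the facet \emph{cones}, so the weaker statement suffices, and the paper proves only that. Its argument composes $\pi_{\mathcal{J}}$ with a further projection $\pi'$ along $\mathbb{R}\langle\pi_{\mathcal{J}}(E^{*}_{j_{1}}),\ldots,\pi_{\mathcal{J}}(E^{*}_{j_{l}})\rangle$, under which $E_{j_{k}}\mapsto-\pi'(E^{*}_{j_{k}'})$ while $E_{i}\mapsto E_{i}$ for $i\in\mathcal{J}'$; choosing a coordinate $k\in\mathcal{I}$ not consumed by the facet, the image of the facet cone lands in $\{y_{k}\leq0\}$ (here the \emph{nonnegativity} of the coefficients is essential --- with arbitrary real coefficients, as on a hyperplane, the vectors $-\pi'(E^{*}_{j_{k}'})$ would contribute positively and the half-space bound fails), while the projected Lipman cone lies in $\{y_{k}>0\}$ by positivity of $(A')^{-1}$. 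If you want to salvage your route, either adopt this projection trick, or actually prove the minor-sign identities for inverse tree matrices that your induction presupposes; as written the proof has a genuine gap at its central step.
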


\begin{proof}
The statement is equivalent with the following: no facet  
of $\mathbb{R}_{\geq0}\langle \sigma\rangle$ cuts into $\mathbb{R}_{>0} \langle \pi_{\mathcal{I}} (E^{*}_{j})\rangle_{ j\in \mathcal{J}}$, that is
\begin{equation}\label{Eq-24}
\mathbb{R}_{\geq0} \langle \pi_{\mathcal{I}} (E^{*}_{j_{1}}),\ldots,\pi_{\mathcal{I}} (E^{*}_{j_{l}}), E_{i_{l+1}},\ldots, E_{i_{|\mathcal{I}|-1}}  \rangle \cap  \mathbb{R}_{> 0}\langle \pi_{\mathcal{I}} (E^{*}_{j}) \rangle_{j\in \mathcal{J}} = \emptyset,
\end{equation}
for all $j_{1},\ldots,j_{l}\in \mathcal{E}_{\mathcal{J}}$ and $i_{l+1},\ldots,i_{|\mathcal{I}|-1}\in \mathcal{I}$. Suppose that (\ref{Eq-24}) does not hold. Then there are $j_{1},\ldots,j_{l}\in \mathcal{E}_{\mathcal{J}}$ and $i_{l+1},\ldots, i_{|\mathcal{I}|-1}\in \mathcal{I}$ such that
\begin{equation}\label{Eq-25}
\mathbb{R}_{\geq0}\langle \pi_{\mathcal{J}}(E^{*}_{j_{1}}),\ldots, \pi_{\mathcal{J}}(E^{*}_{j_{l}}), E_{i_{l+1}},\ldots, E_{i_{|\mathcal{I}|-1}}  \rangle + V_{\mathcal{J}\setminus \mathcal{I}} \cap \mathbb{R}_{>0} \langle \pi_{\mathcal{J}}(E^{*}_{j}) \rangle_{j\in \mathcal{J}} \neq \emptyset.
\end{equation}
Denote $\mathcal{J}' = \mathcal{J} \setminus \{j_{1},\ldots,j_{l}\}$ and let $\pi'$ be the composition of projections $V_{\mathcal{J}} \to V_{\mathcal{J}'}$ along $\mathbb{R}\langle \pi_{\mathcal{J}} (E^{*}_{j_{1}}),\ldots, \pi_{\mathcal{J}}(E^{*}_{j_{l}})\rangle$ and $\pi_{\mathcal{J}}$. 
Note that $\pi'$ restricted to $V_{\mathcal{J}\setminus \mathcal{I}}$ is the identity map, since $V_{\mathcal{J} \setminus \mathcal{I}} \subset V_{\mathcal{J}'}$. 
Hence, from (\ref{Eq-25}) we get
\begin{equation}\label{Eq-26}
\mathbb{R}_{\geq0}\langle \pi' (E_{i_{l+1}}), \ldots, \pi'(E_{i_{|\mathcal{I}|-1}}) \rangle + V_{\mathcal{J}\setminus \mathcal{I}} \cap \mathbb{R}_{>0} \langle \pi' (E^{*}_{j}) \rangle_{j\in \mathcal{J}'} \neq \emptyset.
\end{equation}

Denote $i' \in \mathcal{J}$ the unique neighboring vertex in $\mathcal{T}_{\mathcal{J}}$ of the end vertex $i\in \mathcal{E}_{\mathcal{J}}$. 
By Lemma \ref{Lm-5}(\ref{Lm-5iii}) we have $[\pi_{\mathcal{J}}(E^{*}_{j})]_{j\in \mathcal{J}} \cdot \widetilde{A} = [E_{j}]_{j\in \mathcal{J}}$, where $\widetilde{A} = \widetilde{A}_{\mathcal{J}}$ is a rational positive definite matrix associated with $\mathcal{T}_{\mathcal{J}}$. 
Then for all $i\in \{j_{1},\ldots, j_{l}\}$ the relation $\widetilde{A}_{ii}E^{*}_{i} - E^{*}_{i'} = E_{i}$ implies that 
$\pi'(E_{i}) = -\pi'(E^{*}_{i'})$ for $i\in \{j_{1},\ldots,j_{l}\}$ and 
$\pi'(E_{i}) = E_{i}$ for $i \in \mathcal{J}'$.
After relabeling such that $\{j_{1},\ldots,j_{l}\} \cap \{i_{l+1},\ldots,i_{|\mathcal{I}|-1}\} = \{i_{l+1},\ldots,i_{s}\}$ the relation (\ref{Eq-26}) becomes
\begin{equation}\label{Eq-27}
\mathbb{R}_{\geq0} \langle -\pi' (E^{*}_{i'_{l+1}}), \ldots, -\pi' (E^{*}_{i'_{s}}), E_{i_{s+1}}, \ldots, E_{i_{|\mathcal{I}|-1}} \rangle 
+ 
V_{\mathcal{J}\setminus \mathcal{I}} 
\cap 
\mathbb{R}_{>0}\langle \pi' (E^{*}_{j}) \rangle_{j\in \mathcal{J}'} \neq \emptyset.
\end{equation}
Note that we have $\big[ \pi'(E^{*}_{j}) \big]_{j \in \mathcal{J}'} \cdot A' = \left[ E_{j}\right]_{j \in \mathcal{J}'}$ with positive definite matrix $A'$ associated with tree $\mathcal{T}_{\mathcal{J}'}$, got from $\widetilde{A}$ by removing rows and columns $j_{1},\ldots,j_{l}$. In particular, for any $j\in \mathcal{J}'$ every coefficient of $\pi' (E^{*}_{j})$ in  the basis $\{ E_{i} \}_{ i\in \mathcal{J}'}$ is positive. Therefore, for $k\in \mathcal{I}\setminus \{j_{1},\ldots,j_{l},i_{s+1},\ldots,i_{|\mathcal{I}|-1}\} \neq \emptyset$ the set
$
\mathbb{R}_{\geq0}\langle -\pi' (E^{*}_{i'_{l+1}}),\ldots, -\pi' (E^{*}_{i'_{s}}), E_{i_{s+1}}, \ldots, E_{i_{|\mathcal{I}|-1}} \rangle + V_{\mathcal{J}\setminus \mathcal{I}}
$
is in the closed half-space $\{ \sum_{j\in \mathcal{J}'} y_{j}E_{j} :  y_{k}\leq 0\}$, while $\mathbb{R}_{>0}\langle \pi' (E^{*}_{j}) \rangle_{j\in \mathcal{J}'} $ lies in the open half-space $\{ \sum_{j\in \mathcal{J}'} y_{j}E_{j} :   y_{k}>0\}$, thus (\ref{Eq-27}) cannot hold.
\end{proof}

\begin{proof}[Proof of Proposition \ref{prop:uni}]
We use notations of Lemma \ref{Lm-6}. By Lemma \ref{Lm-5}(\ref{Lm-5ii})--(\ref{Lm-5iii}) the fraction $f_{H}(\mathbf{t}_{\mathcal{I}})$ simplifies to $P_{\mathcal{J}}(\mathbf{t}_{\mathcal{I}}) \cdot \prod_{j\in \mathcal{J}} \big( 1 - h_{j} \mathbf{t}_{\mathcal{I}}^{E^{*}_{j}} \big)^{\delta_{j,\mathcal{J}}-2}$ with $P_{\mathcal{J}}$ Laurent polynomial defined in (\ref{Eq-58}).
Note that $\Psi_{\mathcal{I}}$ is the configuration of vectors appearing in the denominator of $\mathbf{t}_{\mathcal{I}}^{-q} P_{\mathcal{J}}(\mathbf{t}_{\mathcal{I}}) \prod_{j\in \mathcal{J}} \big( 1 - h_{j} \mathbf{t}_{\mathcal{I}}^{E^{*}_{j}} \big)^{\delta_{j,\mathcal{J}}-2} \prod_{i\in \mathcal{I}} \frac{\mathbf{t}_{\mathcal{I}}^{E_{i}}}{1 - \mathbf{t}_{\mathcal{I}}^{E_{i}}}$, thus Lemma \ref{Lm-6} and Theorem \ref{Thm-SzV} imply the quasipolinomiality of the coefficient function (\ref{Eq-57}) when $\pi_{\mathcal{I}}(x) \in y_{\mathcal{I}} + \textnormal{int}\, (\pi_{\mathcal{I}}(\mathcal{S}'))$ for a suitable $y_{\mathcal{I}} \in \pi_{\mathcal{I}}(L')$.

We show that $\pi_{\mathcal{I}} \big( \sum_{v\in \mathcal{V}}(\delta_{v}-2)E^{*}_{v} \big)$ is an appropriate choice for $y_{\mathcal{I}}$. We can use the presentation $\mathbf{t}_{\mathcal{I}}^{-q} f_{H}(\mathbf{t}_{\mathcal{I}}) \prod_{i\in \mathcal{I}} \frac{ \mathbf{t}_{\mathcal{I}}^{E_{i}}}{1 - \mathbf{t}_{\mathcal{I}}^{E_{i}}}$ to determine the domain of quasipolynomiality of (\ref{Eq-57}) by Theorem \ref{Thm-SzV}. The monomials in the numerator of this presentation are of form $\mathbf{t}_{\mathcal{I}}^{\sum_{w\in \mathcal{N}} k_{w}E^{*}_{w}-q + \sum_{i\in \mathcal{I}}E_{i}}$ with $0\leq k_{w}\leq \delta_{w}-2$, $w\in \mathcal{N}$. Thus, it is enough to show that 
$\sum_{v\in \mathcal{V}} (\delta_{v}-2) \pi_{\mathcal{I}}(E^{*}_{v}) + \textnormal{int}\, (\pi_{\mathcal{I}}(\mathcal{S}')) $ is contained in
$\sum_{w\in \mathcal{N}} k_{w} \pi_{\mathcal{I}}(E^{*}_{w}) - \pi_{\mathcal{I}}(q) + \sum_{i \in \mathcal{I}} E_{i} - \sum_{i\in \mathcal{I}} [0,1]E_{i} - \sum_{u\in \mathcal{E}}[0,1] E^{*}_{u} + \textnormal{int}\, (\pi_{\mathcal{I}}(\mathcal{S}'))$ for all $0\leq k_{w}\leq \delta_{w}-2$, $w\in \mathcal{N}$, which follows from the inclusions 
\begin{itemize}
\item $\sum_{w\in \mathcal{N}} (\delta_{w}-2) \pi_{\mathcal{I}} (E^{*}_{w}) + \textnormal{int}\, (\pi_{\mathcal{I}}(\mathcal{S}'))\subseteq \sum_{w\in \mathcal{N}} k_{w} \pi_{\mathcal{I}}(E^{*}_{w}) + \textnormal{int}\, (\pi_{\mathcal{I}}(\mathcal{S}'))$ for all $0\leq k_{w}\leq \delta_{w}-2$, 
\item $0\in -\pi_{\mathcal{I}}(q) + \sum_{i\in \mathcal{I}} E_{i} - \sum_{i\in \mathcal{I}}[0,1] E_{i}$ for all $q\in L' \cap \sum_{v\in \mathcal{V}} [0,1)E_{v}$ and 
\item $-\sum_{u\in \mathcal{E}} \pi_{\mathcal{I}} (E^{*}_{u}) \in - \sum_{u\in \mathcal{E}}[0,1] \pi_{\mathcal{I}} (E^{*}_{u})$.  
\end{itemize}
\end{proof}

\begin{remark}
 In particular, the uniqueness implies that there is a periodic constant $\textnormal{pc}^{\mathcal{S}'_{\mathbb{R}}}(Z_h)$ associated with the cone 
 $\mathcal{S}'_{\mathbb{R}}$. Since in most cases we consider periodic constants $\textnormal{pc}^{\pi_{\mathcal{J}}(\mathcal{S}'_{\mathbb{R}})}(Z_{h}(\mathbf{t}_{\mathcal{J}}))$, we will omit the cone from the notation for simplicity. 
\end{remark}

\section{Structure of the counting function}\label{s:strcf}
The main goal of this section is to prove a presentation of the counting function $Q_h$ using counting functions associated with 
only one- and two-variable projections of $f_h(\mathbf{t})$. This presentation shows the structure of the quadratic quasipolynomial,  
associated with $Q_h$ on a suitable affine `subcone' of the Lipman cone, in terms of the graph $\mathcal{T}$. 
Moreover, we develop this structure for $Q^{red}_h$ associated with $f_h(\mathbf{t}_{\mathcal{N}})$ in terms of the orbifold graph 
$\mathcal{T}^{orb}$ as well.

\subsection{Projection Lemma}
We prove a projection lemma for the coefficient functions, which will be used in the sequel.

Let $V$ be an $r$-dimensional real vector space with a rank $r$ lattice $\Gamma \subset V$ and let $\Psi=[\beta_{1},\ldots,\beta_{n}] \subset \Gamma$ 
be a collection of vectors lying in an open half space of $V$. 
Denote $\pi:V\to W$  the projection along $\mathbb{R}\langle \beta_{1},\ldots,\beta_{r}\rangle$ to a complementary subspace $W\subset V$. Finally, let $H$ be a finite abelian group.

\begin{lemma}\label{Lm-Pr} 
Assume that $\{\beta_{1},\ldots,\beta_{r}\}$ can be extended to a basis of $\Gamma$ and there is a big chamber $\mathfrak{c} \subset \mathbb{R}_{\geq0}\langle\Psi\rangle$ associated with the vector configuration $\Psi$ such that whenever $\mathfrak{c} 
\subset \mathbb{R}_{\geq0}\langle\sigma\rangle$ for a basis $\sigma \in \mathfrak{B}(\Psi)$ then $\{\beta_{1},\ldots,\beta_{r}\}\subset \sigma$. Then we have
$$
\textnormal{Coeff} \bigg( \textnormal{T} \bigg[ \frac{\sum_{\xi\in I} c_{\xi} \mathbf{t}^{\xi}}{\prod_{i=1}^{r}(1- \mathbf{t}^{\beta_{i}})\prod_{j=r+1}^{n}(1-h_{j}\mathbf{t}^{\beta_{j}})}\bigg], \mathbf{t}^{\lambda} \bigg)
=
\textnormal{Coeff} \bigg( \textnormal{T} \bigg[ \frac{\sum_{\xi \in I}c_{\xi} \mathbf{t}^{\pi(\xi)}}{\prod_{j=r+1}^{n}(1-h_{j}\mathbf{t}^{\pi(\beta_{j}}))}\bigg], \mathbf{t}^{\pi(\lambda)}\bigg) 
$$
for all $\lambda \in  \bigcap_{\xi\in I} (\xi  +\mathfrak{c} - \square(\beta)) \cap \Gamma$, 
where $I\subset \Gamma$ is a finite subset, $\square(\beta) = \sum_{i=1}^{n}[0,1]\beta_{i}$, $c_{\xi}\in \mathbb{Z}[H]$ and $h_{j}\in H$. 
\end{lemma}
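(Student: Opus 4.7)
The plan is to apply the Szenes--Vergne formula (Theorem \ref{Thm-SzV}, together with its equivariant extension in Remark \ref{Rk-4}) to both sides of the identity, and to match the two resulting Jeffrey--Kirwan residue sums term by term. Write $\varphi$ for the exponential function associated with the left-hand rational function and $\Psi_{W} = \{\pi(\beta_{r+1}), \ldots, \pi(\beta_{n})\} \subset \pi(\Gamma)$ for the projected vector configuration in $W$. First, I would verify that applying $\pi$ to the domain condition yields $\pi(\lambda) \in \bigcap_{\xi}(\pi(\xi) + \pi(\mathfrak{c}) - \sum_{j > r}[0,1]\pi(\beta_{j})) \cap \pi(\Gamma)$, and that $\pi(\mathfrak{c})$ sits inside a well-defined big chamber $\mathfrak{c}_{W}$ of $\Psi_{W}$, so that Theorem \ref{Thm-SzV} applies on the right as well.

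The combinatorial heart of the argument is a bijection between the bases contributing to the two JK sums. By the hypothesis on $\mathfrak{c}$, any $\sigma \in \mathfrak{B}(\Psi, \mathfrak{c})$ decomposes as $\sigma = \{\beta_{1}, \ldots, \beta_{r}\} \sqcup \sigma'$, and since $\pi$ kills exactly the span of $\beta_{1}, \ldots, \beta_{r}$, the image $\pi(\sigma')$ is a basis of $W$; the map $\sigma \mapsto \pi(\sigma')$ then gives a bijection $\mathfrak{B}(\Psi, \mathfrak{c}) \to \mathfrak{B}(\Psi_{W}, \mathfrak{c}_{W})$. The assumption that $\{\beta_{1}, \ldots, \beta_{r}\}$ extends to a basis of $\Gamma$ forces the short exact sequence $0 \to \mathbb{Z}\langle\beta_{1}, \ldots, \beta_{r}\rangle \to \Gamma \to \pi(\Gamma) \to 0$ to split, which yields the volume identity $\textnormal{vol}_{\Gamma}(\sigma) = \textnormal{vol}_{\pi(\Gamma)}(\pi(\sigma'))$ and identifies the contributing portion of the reduced set of poles on the two sides through the dual inclusion $\pi^{*}: W^{*}_{\mathbb{C}} \hookrightarrow V^{*}_{\mathbb{C}}$.

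Finally, I would unwind the residue contributions. For each such $\sigma$ the simple fraction factors as $\frac{1}{\beta_{1}\cdots\beta_{r}} \cdot \frac{1}{\prod_{\beta \in \sigma'}\beta}$; since $\beta_{1}, \ldots, \beta_{r}$ vanish on the image of $\pi^{*}$, the $\lambda$-dependence of the corresponding JK summand on the left collapses to $\pi(\lambda)$-dependence, and combined with the volume and pole identification above each summand matches the corresponding one on the right. Summing over the bijection yields the claimed identity.

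The main obstacle will be the careful bookkeeping around the equivariant coefficients $h_{j}$ and the structure of the reduced sets of poles under projection: one first rewrites each factor $(1 - h_{j}\mathbf{t}^{\beta_{j}})^{-1}$ as in (\ref{Eq-49}) to land squarely in the Szenes--Vergne framework, and then must check that the resulting reduced set of poles behaves compatibly with $\pi$ so that the term-by-term matching goes through. Once this setup is in place, the volume comparison and the chamber bijection make the rest essentially formal.
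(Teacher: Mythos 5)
Your strategy---proving the identity by matching the two Jeffrey--Kirwan residue sums term by term---is a genuinely different route from the paper's, and as written it has a gap at its central step. The Jeffrey--Kirwan residue is defined through the total residue projection $\textnormal{Tres}_{\Psi}$ on all of $\mathbf{R}_{\Psi}$, and the coefficient with which a given simple fraction $\frac{1}{\prod_{\beta\in\sigma}\beta}$ appears in $\textnormal{Tres}_{\Psi}\big(e^{\langle\lambda,z-p\rangle}\varphi(p-z)\big)$ is extracted from the Taylor expansion of $e^{\langle\lambda,z\rangle}$ in \emph{all} of $V^{*}$, not from its restriction to $\pi^{*}(W^{*}_{\mathbb{C}})$. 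So the assertion that ``the $\lambda$-dependence of the corresponding JK summand collapses to $\pi(\lambda)$-dependence because $\beta_{1},\ldots,\beta_{r}$ vanish on the image of $\pi^{*}$'' does not follow from anything stated in the paper: already in two variables the coefficients of the simple fractions \emph{not} containing all of $\beta_{1},\ldots,\beta_{r}$ do depend on the $\beta_{i}$-coordinates of $\lambda$, so the collapse you need is a special feature of the chamber hypothesis that must be proved, not observed. Making it rigorous essentially requires the iterated-residue description of $\textnormal{JK}^{\mathfrak{c}}$ from Brion--Vergne/Szenes--Vergne, which goes well beyond the weak form of Theorem \ref{Thm-SzV} quoted in the paper. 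In addition, surjectivity of your basis bijection $\mathfrak{B}(\Psi,\mathfrak{c})\to\mathfrak{B}(\Psi_{W},\mathfrak{c}_{W})$ is asserted without argument, and the compatibility of the reduced sets of poles is flagged as a difficulty rather than resolved.

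The paper's proof avoids residues almost entirely. After reducing to numerator $1$, it shows that any $\mu$ in the support $\mathbb{Z}_{\geq0}\langle\beta_{r+1},\ldots,\beta_{n}\rangle$ lying in the fiber $\lambda+\mathbb{R}\langle\beta_{1},\ldots,\beta_{r}\rangle$ satisfies $\lambda\in\mu+\mathbb{Z}_{>0}\langle\beta_{1},\ldots,\beta_{r}\rangle$: integrality of the coefficients comes from $\{\beta_{1},\ldots,\beta_{r}\}$ being part of a basis of $\Gamma$, and negativity comes from the chamber hypothesis, which forces $\mathfrak{c}\cap\mathbb{R}_{\geq0}\langle\beta_{r+1},\ldots,\beta_{n}\rangle=\emptyset$. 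Hence the coefficient of $\mathbf{t}^{\pi(\lambda)}$ on the right, being a sum over the fiber, equals the sum over $\mu\in\lambda+\mathbb{Z}_{<0}\langle\beta_{1},\ldots,\beta_{r}\rangle$, which is exactly the coefficient of $\mathbf{t}^{\lambda}$ on the left; this gives the identity on $\mathfrak{c}\cap\Gamma$. Theorem \ref{Thm-SzV} enters only at the end, to say that both sides are quasipolynomials on $\bigcap_{\xi}(\xi+\mathfrak{c}-\square(\beta))\cap\Gamma$, so agreement on the full-dimensional cone $\mathfrak{c}\cap\Gamma$ forces agreement on the whole domain. I would either switch to this direct counting argument or, if you want to keep the residue-theoretic route, supply the iterated-residue formalism and prove the collapse and the basis bijection in detail.
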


\begin{remark*}
Sometimes the numerator $\sum_{\xi \in I} c_{\xi} \mathbf{t}^{\xi}$ is not explicit, thus the description of the precise set of lattice points  where the above lemma holds can be cumbersome. 
Nevertheless, this set always contains a maximal dimensional affine subcone of form $y+(\mathfrak{c} \cap \Gamma)$ with $y\in \mathfrak{c} \cap \Gamma$ where the lemma can be applied. Therefore, we will refer to it as a suitable affine subcone.
\end{remark*}

\begin{proof}
It is enough to prove the lemma when  $\sum_{\xi\in I} c_{\xi}\mathbf{t}^{\xi}=1$. First, we show that for any $\mu\in (\lambda + \mathbb{R}\langle \beta_{1},\ldots,\beta_{r}\rangle) \cap \mathbb{Z}_{\geq0}\langle \beta_{r+1},\ldots,\beta_{n}\rangle$ we have $\lambda \in \mu + \mathbb{Z}_{>0}\langle \beta_{1},\ldots,\beta_{r}\rangle$. Indeed, if we write $\mu = \lambda + \sum_{i=1}^{r} a_{i}\beta_{i}$ then $a_{i}\in \mathbb{Z}$, since $\beta_{1},\ldots,\beta_{r}$ is part of a basis of $\Gamma$. We use induction on $r$ to show that $a_{i}<0$ for all $i$. 

Remark that for any $\sigma \in \mathfrak{B}(\Psi)$ such that $\mathfrak{c}\subset \mathbb{R}_{\geq0}\langle\sigma\rangle$ we have $\lambda, \beta_{1}, \ldots, \beta_{r}\in \mathbb{R}_{\geq0}\langle\sigma\rangle$, hence $\mathbb{R}_{\geq0}\langle \lambda, \beta_{1},\ldots, \beta_{r}\rangle$ is in the closure of $\mathfrak{c}$. 
Therefore, if $a_{i}\geq0$ for all $i$ then $\lambda\in \mathfrak{c}$ implies that $\mu \in \mathfrak{c}$.
The condition whenever $\mathfrak{c} 
\subset \mathbb{R}_{\geq0}\langle\sigma\rangle$ for a basis $\sigma \in \mathfrak{B}(\Psi)$ then $\{\beta_{1},\ldots,\beta_{r}\}\subset \sigma$, implies $\mathfrak{c} \subset \mathbb{R}_{\geq0}\langle\Psi\rangle \setminus \mathbb{R}_{\geq0}\langle \beta_{r+1},\ldots,\beta_{n}\rangle$, thus $\mu \in \mathfrak{c}$ contradicts the assumption $\mu \in \mathbb{Z}_{\geq0}\langle\beta_{r+1},\ldots,\beta_{n}\rangle$. Therefore, one of the $a_{i}$'s must be negative, and we can assume that $a_{r}<0$. Then we can write $\mu - a_{r}\beta_{r} = \lambda+ \sum_{i=1}^{r-1}a_{i}\beta_{i} \in (\lambda + \mathbb{R}\langle\beta_{1},\ldots,\beta_{r-1}\rangle)\cap \mathbb{Z}_{\geq0} \langle \beta_{r},\ldots,\beta_{n}\rangle$ and we can proceed as before. Thus, we get that $a_{1},\ldots,a_{r}<0$. 
Finally, we compute
\begin{gather*}
\textnormal{Coeff} \bigg( \textnormal{T} \bigg[ \frac{1}{\prod_{j=r+1}^{n}(1-h_{j}\mathbf{t}^{\pi(\beta_{j}}))}\bigg], \mathbf{t}^{\pi(\lambda)}\bigg)
=
\sum_{\mu \in \lambda + \mathbb{Z}_{<0}\langle \beta_{1},\ldots,\beta_{r}\rangle}
\textnormal{Coeff} \bigg( \textnormal{T} \bigg[ \frac{1}{\prod_{j=r+1}^{n}(1-h_{j}\mathbf{t}^{\beta_{j}})}\bigg], \mathbf{t}^{\mu}\bigg)
\\
=
\textnormal{Coeff} \bigg( \textnormal{T} \bigg[ \frac{1}{\prod_{i=1}^{r}(1- \mathbf{t}^{\beta_{i}})\prod_{j=r+1}^{n}(1-h_{j}\mathbf{t}^{\beta_{j}})}\bigg], \mathbf{t}^{\lambda} \bigg)
\end{gather*}
 for any $\lambda \in \mathfrak{c} \cap \Gamma$. Moreover, both coefficient functions are quasipolynomial for $\lambda \in (\mathfrak{c} -\square(\beta)) \cap \Gamma$ by Theorem \ref{Thm-SzV}. Furthermore, the quasipolynomials concide, since they agree on the maximal dimensional cone $\mathfrak{c} \cap \Gamma$, thus the coefficient functions must agree on the larger set $(\mathfrak{c} -\square(\beta)) \cap \Gamma$, too.
\end{proof}

\subsection{Structure theorem}\label{ss:structure}
\begin{thm}\label{Thm-2}
With short notation $\mathbf{t}_{vw}  = \mathbf{t}_{\{v,w\}}$, for any $x\in \sum_{v\in \mathcal{V}}(\delta_{v}-2)E^{*}_{v} + 
\textnormal{int}\,  (\mathcal{S}')$  we have
\begin{align*}
Q_{h}(x) 
={}&
\sum_{v\in \mathcal{V}} \textnormal{Coeff}\left(  \textnormal{T} \left[ t_{v}^{-r_{h-[x]}} f_{h}(t_{v}) \frac{t_{v}}{1-t_{v}}\right], t_{v}^{x}
\right)
\\
&{}-\sum_{\overline{vw}\textnormal{ edge of }\mathcal{T}} \textnormal{Coeff}\left(  \textnormal{T} \left[ \mathbf{t}_{vw}^{-r_{h-[x]}} f_{h}(\mathbf{t}_{vw}) \frac{t_{v}t_{w}}{(1-t_{v})(1-t_{w})}\right], \mathbf{t}_{vw}^{x}
\right).
\end{align*}

\end{thm}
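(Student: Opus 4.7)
My plan is to start from the identity
\[
Q_h(x) = \sum_{\emptyset \neq \mathcal{I} \subseteq \mathcal{V}} (-1)^{|\mathcal{I}|-1}\, T_{\mathcal{I}}(x)
\]
established in (\ref{eq:q1}), where $T_{\mathcal{I}}(x)$ denotes the $\mathcal{I}$-indexed coefficient function appearing in the sum, and to collapse this alternating sum using the Projection Lemma \ref{Lm-Pr} together with the tree structure of $\mathcal{T}$. For any non-empty $\mathcal{I} \subseteq \mathcal{V}$, let $\mathcal{H}(\mathcal{I})$ be the smallest subtree of $\mathcal{T}$ containing $\mathcal{I}$ (its \emph{hull}) and let $E(\mathcal{I})$ be the set of leaves of $\mathcal{H}(\mathcal{I})$; by minimality of the hull, $E(\mathcal{I}) \subseteq \mathcal{I}$.

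The key reduction claim is that $T_{\mathcal{I}}(x) = T_{E(\mathcal{I})}(x)$ on a suitable affine subcone containing the one in the statement. I would prove this by peeling off interior vertices of $\mathcal{H}(\mathcal{I})$ one at a time. For any $v \in \mathcal{I}$ interior to $\mathcal{H}(\mathcal{I})$ (degree $\geq 2$ in $\mathcal{H}(\mathcal{I})$), the factor $(1-\mathbf{t}_{\mathcal{I}}^{E_{v}})$ sits in the denominator of the integrand; projecting along $E_{v}$ via Lemma \ref{Lm-Pr} sends this integrand to the one defining $T_{\mathcal{I}\setminus\{v\}}$, since $\pi(E_{v})=0$, $\pi(E_{u})=E_{u}$ for $u\neq v$, and $f_{h}(\mathbf{t}_{\mathcal{I}})|_{t_{v}=1} = f_{h}(\mathbf{t}_{\mathcal{I}\setminus\{v\}})$. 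Iterating over all interior vertices of $\mathcal{H}(\mathcal{I})$ lying in $\mathcal{I}$ then yields the claim.

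Assuming the reduction, I regroup the sum by $\mathcal{L} := E(\mathcal{I})$ to get
\[
Q_{h}(x) = \sum_{\mathcal{L}} w(\mathcal{L})\, T_{\mathcal{L}}(x), \qquad w(\mathcal{L}) := \sum_{\mathcal{I} :\, E(\mathcal{I})=\mathcal{L}} (-1)^{|\mathcal{I}|-1}.
\]
The condition $E(\mathcal{I}) = \mathcal{L}$ is equivalent to $\mathcal{L}$ being the leaf set of $\mathcal{H}(\mathcal{L})$ together with $\mathcal{L} \subseteq \mathcal{I} \subseteq \mathcal{H}(\mathcal{L})$. Parameterising $\mathcal{I} = \mathcal{L}\cup S$ with $S \subseteq \mathcal{H}(\mathcal{L})\setminus\mathcal{L}$, the binomial theorem gives
\[
w(\mathcal{L}) = (-1)^{|\mathcal{L}|-1}\sum_{S} (-1)^{|S|} = (-1)^{|\mathcal{L}|-1} \cdot \bigl[\mathcal{H}(\mathcal{L})=\mathcal{L}\bigr].
\]
The condition $\mathcal{H}(\mathcal{L}) = \mathcal{L}$ together with $\mathcal{L}$ being its own leaf set forces $\mathcal{L}$ to be either a single vertex (contributing weight $+1$) or an edge of $\mathcal{T}$ (contributing weight $-1$); in all other cases $w(\mathcal{L}) = 0$. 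This yields exactly the two sums in the statement.

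The main obstacle is the chamber condition required by Lemma \ref{Lm-Pr} at each reduction step: for $v$ interior to $\mathcal{H}(\mathcal{I})$, every basis $\sigma \in \mathfrak{B}(\Psi_{\mathcal{I}}', \mathfrak{c})$ with $\mathfrak{c} = \textnormal{int}(\pi_{\mathcal{I}}(\mathcal{S}'_{\mathbb{R}}))$ must contain $E_{v}$. This is a combinatorial-geometric statement about the tree, which I would prove in the spirit of Lemma \ref{Lm-6} and Proposition \ref{prop:uni}: using the relation $[\pi_{\mathcal{I}}(E_{j}^{*})]_{j} \widetilde{A}_{\mathcal{H}(\mathcal{I})} = [E_{j}]_{j}$ from Lemma \ref{Lm-5}(\ref{Lm-5iii}) and the strict positivity of the entries of the inverse of $\widetilde{A}_{\mathcal{H}(\mathcal{I})}$, one shows that omitting $E_{v}$ from $\sigma$ forces some $\pi_{\mathcal{I}}(E_{u}^{*}) \in \mathfrak{c}$ to acquire a negative coordinate in $\sigma$, contradicting $\mathfrak{c} \subseteq \mathbb{R}_{\geq 0}\langle\sigma\rangle$. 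The remaining bookkeeping is tracking how the domain of validity from Lemma \ref{Lm-Pr} shrinks through each successive projection, and checking that $\sum_{v}(\delta_{v}-2)E_{v}^{*} + \textnormal{int}(\mathcal{S}')$ lies in every such domain; this can be done by the same kind of inclusion argument used at the end of the proof of Proposition \ref{prop:uni}.
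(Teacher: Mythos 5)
Your proposal is correct and follows essentially the same route as the paper: the grouping of subsets $\mathcal{I}$ by the leaf set of their convex hull is exactly the paper's partition of $2^{\mathcal{V}}\setminus\{\emptyset\}$ indexed by subtrees, your reduction claim $T_{\mathcal{I}}=T_{E(\mathcal{I})}$ is the paper's Lemma~\ref{Lm-8} (proved there by one simultaneous application of Lemma~\ref{Lm-Pr} rather than your vertex-by-vertex peeling, with the chamber condition verified by the same positivity argument via Lemma~\ref{Lm-5}(\ref{Lm-5iii})), and the alternating-sum cancellation leaving only singletons and edges is identical.
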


Regrouping the coefficient functions we can express the above result in terms of counting functions.

\begin{cor}\label{cor:structure}
For any $x\in \sum_{v\in \mathcal{V}}(\delta_{v}-2)E^{*}_{v}+\textnormal{int}\, (\mathcal{S}')$  we have
\begin{align*}
Q_{h}(x) 
={}&
\sum_{\overline{vw}\textnormal{ edge}}
Q^{vw}_{h}(x)
- \sum_{v\in \mathcal{V}} (\delta_{v}-1) Q^{v}_{h}(x),
\end{align*}
where $Q^{v}_{h}$ and $Q^{vw}_{h}$ are counting functions associated with one-variable series $Z_{h}(t_{v}) = \textnormal{T}[f_{h}(t_{v})]$ and two-variable series $Z_{h}(\mathbf{t}_{vw}) = \textnormal{T}[f_{h}(\mathbf{t}_{vw})]$, respectively. In particular, the multivariable periodic constant can be expressed in terms of one- and two-variable periodic constants
$$
\textnormal{pc}(Z_{h}(\mathbf{t})) = \sum_{\overline{vw}\textnormal{ edge of }\mathcal{T}}\textnormal{pc}(Z_{h}(\mathbf{t}_{vw})) - \sum_{v\in \mathcal{V}} (\delta_{v}-1)\textnormal{pc}(Z_{h}(t_{v})).
$$
\end{cor}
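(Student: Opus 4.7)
The plan is to deduce the corollary directly from Theorem~\ref{Thm-2} by re-expressing each coefficient function appearing there as a suitable one- or two-variable counting function, and then performing a combinatorial rearrangement. First, applying the decomposition~(\ref{eq:q1}) to the one-variable series $Z_h(t_v) = \textnormal{T}[f_h(t_v)]$ (where the only non-empty subset of $\{v\}$ is $\{v\}$ itself) identifies
$$
Q^v_h(x) \,=\, \textnormal{Coeff}\Bigl(\textnormal{T}\Bigl[t_v^{-r_{h-[x]}} f_h(t_v)\,\frac{t_v}{1-t_v}\Bigr],\, t_v^x\Bigr),
$$
so that the first sum in Theorem~\ref{Thm-2} equals $\sum_{v\in\mathcal{V}} Q^v_h(x)$ on the nose.

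Next, I apply~(\ref{eq:q1}) to the two-variable series $Z_h(\mathbf{t}_{vw})$, with $\mathcal{I}$ ranging over the non-empty subsets of $\{v,w\}$; inclusion-exclusion then yields
$$
Q^{vw}_h(x) \,=\, Q^v_h(x) + Q^w_h(x) - \textnormal{Coeff}\Bigl(\textnormal{T}\Bigl[\mathbf{t}_{vw}^{-r_{h-[x]}} f_h(\mathbf{t}_{vw})\,\frac{t_v t_w}{(1-t_v)(1-t_w)}\Bigr],\, \mathbf{t}_{vw}^x\Bigr),
$$
which solves for the edge coefficient function in Theorem~\ref{Thm-2} as $Q^v_h(x) + Q^w_h(x) - Q^{vw}_h(x)$. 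Substituting this back and using the elementary identity $\sum_{\overline{vw}\,\text{edge}}\bigl(Q^v_h(x) + Q^w_h(x)\bigr) = \sum_{v\in\mathcal{V}} \delta_v\, Q^v_h(x)$ (since each vertex $v$ belongs to exactly $\delta_v$ edges) gives the first claim after collecting the coefficient of $Q^v_h(x)$ as $1 - \delta_v = -(\delta_v - 1)$.

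For the periodic constant identity, Theorem~\ref{Thm-A} guarantees that $Q_h$, each $Q^{vw}_h$, and each $Q^v_h$ agrees with a unique quasipolynomial on a sufficiently large affine subcone of its respective (projected) Lipman cone. Hence the identity just proved propagates to an identity of quasipolynomials, and extracting the constant terms in the sense of~(\ref{eq:PCDEF}) yields the advertised periodic-constant formula. The only mildly delicate point I foresee is that the decomposition~(\ref{eq:q1}) was originally derived for the full multivariable series, but it applies verbatim to $Z_h(\mathbf{t}_{vw})$ (with the two-variable lift $\pi_{vw}(r_{h-[x]})$) by rerunning the derivation of Section~\ref{sec:coeff} with $\mathcal{V}$ replaced by $\{v,w\}$; this is the main but very mild technical point.
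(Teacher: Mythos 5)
Your proposal is correct and matches the paper's (unstated but intended) argument: the paper derives the corollary from Theorem~\ref{Thm-2} precisely by the regrouping you carry out, identifying the vertex and edge coefficient functions with $Q^v_h$ and $Q^v_h+Q^w_h-Q^{vw}_h$ via the decomposition of Section~\ref{sec:coeff} applied to the projected series, and then collecting the coefficient $1-\delta_v$ of each $Q^v_h$. The passage to periodic constants via the quasipolynomiality on a common affine subcone is also as intended.
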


In the proof of the theorem we use the following lemma.

\begin{lemma}\label{Lm-8}
Let $\mathcal{J}\subseteq \mathcal{V}$ be a subset such that $\mathcal{T}_{\mathcal{J}}$ is a subtree of $\mathcal{T}$ and denote $\mathcal{E}_{\mathcal{J}}\subseteq \mathcal{J}$ the set of end vertices of $\mathcal{T}_{\mathcal{J}}$. Then for any $\mathcal{I}$ such that $\mathcal{E}_{\mathcal{J}}\subseteq \mathcal{I} \subseteq \mathcal{J}$ and  for any $x\in \sum_{v\in \mathcal{V}} (\delta_{v}-2)E^{*}_{v} + \textnormal{int}\, (\mathcal{S}')$, one has the following
\begin{equation*}
\textnormal{Coeff} \bigg( \textnormal{T} \bigg[ \mathbf{t}_{\mathcal{J}}^{-r_{h-[x]}}  f_{h}(\mathbf{t}_{\mathcal{J}}) \prod_{j \in \mathcal{J}} \frac{\mathbf{t}_{\mathcal{J}}^{E_{j}}}{1-\mathbf{t}_{\mathcal{J}}^{E_{j}}} \bigg], \mathbf{t}_{\mathcal{J}}^{x} \bigg)
=
\textnormal{Coeff} \bigg( \textnormal{T} \bigg[ \mathbf{t}_{\mathcal{I}}^{-r_{h-[x]}}  f_{h}(\mathbf{t}_{\mathcal{I}}) \prod_{i \in \mathcal{I}} \frac{\mathbf{t}_{\mathcal{I}}^{E_{i}}}{1-\mathbf{t}_{\mathcal{I}}^{E_{i}}} \bigg], \mathbf{t}_{\mathcal{I}}^{x} \bigg).
\end{equation*}
\end{lemma}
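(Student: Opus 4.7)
The plan is to apply the projection Lemma~\ref{Lm-Pr} simultaneously along the coordinates $\{E_k\}_{k\in\mathcal{K}}$, where $\mathcal{K}:=\mathcal{J}\setminus\mathcal{I}$, thereby reducing the left-hand Taylor coefficient to the right-hand one. Since $\mathcal{E}_{\mathcal{J}}\subseteq\mathcal{I}$, every $k\in\mathcal{K}$ is a non-end vertex of $\mathcal{T}_{\mathcal{J}}$. By Lemma~\ref{Lm-5}(\ref{Lm-5ii}) applied to the subtree $\mathcal{T}_{\mathcal{J}}$, we have $f_h(\mathbf{t}_{\mathcal{J}})=P_{\mathcal{J}}(\mathbf{t}_{\mathcal{J}})\cdot\prod_{j\in\mathcal{J}}(1-h_j\mathbf{t}_{\mathcal{J}}^{E_j^*})^{\delta_{j,\mathcal{J}}-2}$ with $P_{\mathcal{J}}$ a Laurent polynomial. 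Since $\delta_{j,\mathcal{J}}-2=-1$ for $j\in\mathcal{E}_{\mathcal{J}}$ and $\geq 0$ otherwise, the denominator of the rational function on the left is controlled by the configuration
\[\Psi_{\mathcal{J}}=\{E_j:j\in\mathcal{J}\}\cup\{\pi_{\mathcal{J}}(E_\ell^*):\ell\in\mathcal{E}_{\mathcal{J}}\}.\]

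The first hypothesis of Lemma~\ref{Lm-Pr} is trivial, as $\{E_k\}_{k\in\mathcal{K}}$ lies in the standard basis of $L$. For the chamber hypothesis I take $\mathfrak{c}$ to be the big chamber of $\Psi_{\mathcal{J}}$ containing the projected Lipman cone $\pi_{\mathcal{J}}(\textnormal{int}\,\mathcal{S}'_{\mathbb{R}})$, whose existence is guaranteed by Lemma~\ref{Lm-6} applied with $\mathcal{I}'=\mathcal{J}$. The essential combinatorial step is then the following:

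\emph{Claim.} Every basis $\sigma\in\mathfrak{B}(\Psi_{\mathcal{J}})$ with $\mathfrak{c}\subset\mathbb{R}_{\geq0}\langle\sigma\rangle$ contains $E_k$ whenever $k\in\mathcal{J}$ is a non-end vertex of $\mathcal{T}_{\mathcal{J}}$; in particular for every $k\in\mathcal{K}$.

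To prove the claim, write $\sigma=\{\pi_{\mathcal{J}}(E_\ell^*)\}_{\ell\in\mathcal{L}}\cup\{E_i\}_{i\in\mathcal{J}''}$ with $\mathcal{L}\subseteq\mathcal{E}_{\mathcal{J}}$ and assume for contradiction that $E_k\notin\sigma$. Let $\pi'\colon V_{\mathcal{J}}\to V_{\mathcal{J}\setminus\mathcal{L}}$ be the projection along $\mathbb{R}\langle\pi_{\mathcal{J}}(E_\ell^*)\rangle_{\ell\in\mathcal{L}}$. For $|\mathcal{J}|\geq 3$ (the cases $|\mathcal{J}|\leq 2$ being vacuous) the neighbor $\ell'$ of any end $\ell\in\mathcal{L}$ is a non-end of $\mathcal{T}_{\mathcal{J}}$, so $\ell'\notin\mathcal{L}$; the end-vertex relation $\widetilde{A}_{\mathcal{J},\ell\ell}\pi_{\mathcal{J}}(E_\ell^*)-\pi_{\mathcal{J}}(E_{\ell'}^*)=E_\ell$ from Lemma~\ref{Lm-5}(\ref{Lm-5iii}) then gives $\pi'(E_\ell)=-\pi_{\mathcal{J}\setminus\mathcal{L}}(E_{\ell'}^*)$ for $\ell\in\mathcal{L}$, while $\pi'(E_i)=E_i$ for $i\in\mathcal{J}''\setminus\mathcal{L}$. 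Since all entries of $\widetilde{A}_{\mathcal{J}\setminus\mathcal{L}}^{-1}$ are strictly positive (Lemma~\ref{Lm-5}(\ref{Lm-5iii})), every generator of $\pi'(\sigma)$ has $E_k$-coordinate $\leq 0$, whereas every $\pi_{\mathcal{J}\setminus\mathcal{L}}(E_j^*)$ with $j\in\mathcal{J}$ has strictly positive $E_k$-coordinate, so $\pi'(\pi_{\mathcal{J}}(\textnormal{int}\,\mathcal{S}'_{\mathbb{R}}))=\pi_{\mathcal{J}\setminus\mathcal{L}}(\textnormal{int}\,\mathcal{S}'_{\mathbb{R}})\subset\{y_k>0\}$. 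Applying $\pi'$ to the assumed inclusion $\pi_{\mathcal{J}}(\textnormal{int}\,\mathcal{S}'_{\mathbb{R}})\subset\mathbb{R}_{\geq0}\langle\sigma\rangle$ forces a non-empty intersection of $\{y_k>0\}$ and $\{y_k\leq 0\}$, the desired contradiction.

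With both hypotheses of Lemma~\ref{Lm-Pr} verified, it yields the asserted equality on the suitable affine subcone $\bigcap_{\xi}(\xi+\mathfrak{c}-\square(\beta))\cap L'$; the containment of $\sum_{v\in\mathcal{V}}(\delta_v-2)E_v^*+\textnormal{int}\,\mathcal{S}'$ in this subcone is established exactly as in the proof of Proposition~\ref{prop:uni}, and the identity $f_h(\mathbf{t}_{\mathcal{I}})=f_h(\mathbf{t}_{\mathcal{J}})|_{t_k=1,\,k\in\mathcal{K}}$ identifies the projected rational function with the one appearing on the right-hand side. The principal obstacle is precisely the combinatorial claim above: Lemma~\ref{Lm-6} only places the projected Lipman cone into \emph{some} big chamber, whereas here one must further identify which vectors of $\Psi_{\mathcal{J}}$ are \emph{forced} to lie in any basis spanning that chamber, and this requires the hyperplane-separation argument in the residual subspace $V_{\mathcal{J}\setminus\mathcal{L}}$.
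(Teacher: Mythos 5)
Your proposal is correct and follows the same overall route as the paper: factor $f_h(\mathbf{t}_{\mathcal{J}})$ via Lemma \ref{Lm-5}(\ref{Lm-5ii}), invoke Lemma \ref{Lm-6} for the big chamber containing $\pi_{\mathcal{J}}(\textnormal{int}\,\mathcal{S}'_{\mathbb{R}})$, verify the forced--vector hypothesis of Lemma \ref{Lm-Pr}, and project along $\{E_k\}_{k\in\mathcal{J}\setminus\mathcal{I}}$. The one place you genuinely diverge is the proof of the key combinatorial claim. The paper argues by counting: if some $E_k$ with $k\notin\mathcal{E}_{\mathcal{J}}$ is missing from a basis $\sigma$ whose cone contains the chamber, then by pigeonhole some end $w$ contributes both $E_w$ and $|H|\pi_{\mathcal{J}}(E^*_w)$ to $\sigma$, and the single relation $\widetilde A_{ww}\pi_{\mathcal{J}}(E^*_w)-\pi_{\mathcal{J}}(E^*_{w'})=E_w$ then forces the coefficient of $E_w$ in the expansion of $\pi_{\mathcal{J}}(E^*_{w'})\in\mathbb{R}_{\geq0}\langle\sigma\rangle$ to equal $-1$, a contradiction. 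Your half-space separation in the quotient by $\mathbb{R}\langle\pi_{\mathcal{J}}(E^*_\ell)\rangle_{\ell\in\mathcal{L}}$ reaches the same conclusion and is valid — it is essentially the technique of the proof of Lemma \ref{Lm-6} rerun for this configuration — but it is heavier, since it requires setting up the residual projection and the positivity of the entries of the inverse of the reduced matrix a second time. Two small points you gloss over: for the basis hypothesis of Lemma \ref{Lm-Pr} one must first normalize $x=\bar x+r_{[x]}$ and use expansions of type (\ref{Eq-49}) so that all exponents lie in $\pi_{\mathcal{J}}(L)=\mathbb{Z}\langle E_j\rangle_{j\in\mathcal{J}}$ (the paper's opening paragraph does exactly this); and $\pi'\circ\pi_{\mathcal{J}}$ is not literally $\pi_{\mathcal{J}\setminus\mathcal{L}}$, although the property you actually use, positivity of the coordinates of $\pi'(\pi_{\mathcal{J}}(E^*_j))$ for $j\notin\mathcal{L}$, does hold for the composed projection. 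Neither affects the correctness of the argument.
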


\begin{proof}
Set $x = \bar{x} + r_{[x]}$ and note that the lemma is equivalent with  
$$
\textnormal{Coeff} \bigg( \textnormal{T} \bigg[ \mathbf{t}_{\mathcal{J}}^{-r_{h-[x]}-r_{[x]}}  f_{h}(\mathbf{t}_{\mathcal{J}}) \prod_{j \in \mathcal{J}} \frac{\mathbf{t}_{\mathcal{J}}^{E_{j}}}{1-\mathbf{t}_{\mathcal{J}}^{E_{j}}} \bigg], \mathbf{t}_{\mathcal{J}}^{\bar{x}} \bigg)
=
\textnormal{Coeff} \bigg( \textnormal{T} \bigg[ \mathbf{t}_{\mathcal{I}}^{-r_{h-[x]}-r_{[x]}}  f_{h}(\mathbf{t}_{\mathcal{I}}) \prod_{i \in \mathcal{I}} \frac{\mathbf{t}_{\mathcal{I}}^{E_{i}}}{1-\mathbf{t}_{\mathcal{I}}^{E_{i}}} \bigg], \mathbf{t}_{\mathcal{I}}^{\bar{x}} \bigg)
$$
for any $\bar{x} \in -r_{[x]} + \sum_{v\in \mathcal{V}}(\delta_{v}-2)E^{*}_{v} + \textnormal{int}\, (\mathcal{S}')$, which will follow from  Lemma \ref{Lm-Pr}. 

We start verifying the conditions of Lemma \ref{Lm-Pr}. In Lemma \ref{Lm-5}(\ref{Lm-5ii}) we proved that $f_{H}(\mathbf{t}_{\mathcal{J}})$ simplifies to a fraction $P_{\mathcal{J}}(\mathbf{t}_{\mathcal{J}}) \prod_{j\in \mathcal{J}} \big( 1 - h_{j} \mathbf{t}_{\mathcal{J}}^{E^{*}_{j}} \big)^{\delta_{j,\mathcal{J}}-2}$, where $P_{\mathcal{J}}$ is the Laurent polynomial given  by (\ref{Eq-58}).
Using expansion of type (\ref{Eq-49}) for this latter fraction,   $\mathbf{t}_{\mathcal{J}}^{-r_{h-[x]}-r_{[x]}}  f_{h}(\mathbf{t}_{\mathcal{J}}) \prod_{j \in \mathcal{J}} \frac{\mathbf{t}_{\mathcal{J}}^{E_{j}}}{1-\mathbf{t}_{\mathcal{J}}^{E_{j}}}$ can be simplified to a fraction $g_{\mathcal{J}}$ such that only vectors $|H|\cdot \pi_{\mathcal{J}}(E^{*}_{i})$ and $E_{j}$ for $i\in \mathcal{E}_{\mathcal{J}}$, $j\in \mathcal{J}$ appear in its denominator. Moreover, Lemma \ref{Lm-6} shows that $\textnormal{int}\, (\pi_{\mathcal{J}}(\mathcal{S}'_{\mathbb{R}}))$ is contained entirely in a big chamber $\mathfrak{c}$ of the vector configuration $\Psi_{\mathcal{J}} = \{|H|\cdot \pi_{\mathcal{J}}(E^{*}_{i}), E_{j} : i \in \mathcal{E}_{\mathcal{J}}, j\in \mathcal{J}\}$. Note that $\{E_{j}\}_{j\in \mathcal{J}}$ is a basis of $\pi_{\mathcal{J}}(L)$ and the fraction $g_{\mathcal{J}}(\mathbf{t}_{\mathcal{J}})$ is a rational function in variables $t_{j} = \mathbf{t}_{\mathcal{J}}^{E_{j}}$, $j\in \mathcal{J}$. 
The condition  of Lemma \ref{Lm-Pr} for  $\mathfrak{c}$ is satisfied if for any  $\alpha_{1},\ldots,\alpha_{|\mathcal{J}|} \in \Psi_{\mathcal{J}}$ such that 
\begin{equation}\label{Eq-52}
\textnormal{int}\, (\pi_{\mathcal{J}}(\mathcal{S}'_{\mathbb{R}})) =  \mathbb{R}_{> 0} \langle \pi_{\mathcal{J}}(E_{j}^{*})\rangle_{j\in \mathcal{J}} \subset \mathbb{R}_{\geq 0} \langle \alpha_{1},\ldots,\alpha_{|\mathcal{J}|}\rangle
\end{equation} 
we have $E_{v}\in \{\alpha_{1},\ldots,\alpha_{|\mathcal{J}|}\}$ for all $v\notin \mathcal{E}_{\mathcal{J}}$, in particular for all $v\in \mathcal{J}\setminus \mathcal{I}$. If (\ref{Eq-52}) does not hold then there exists $w\in \mathcal{E}_{\mathcal{J}}$ such that $E_{w} = \alpha_{k}$ and $|H|\cdot \pi_{\mathcal{J}}(E_{w}^{*}) = \alpha_{l}$ for some $k$ and $l$.  Denote $w'\in \mathcal{J}$ the unique neighbor of the vertex $w$ in $\mathcal{T}_{\mathcal{J}}$. Hence $\widetilde{A}_{ww}\pi_{\mathcal{J}}(E_{w}^{*}) - \pi_{\mathcal{J}}(E_{w'}^{*}) = E_{w}$ by Lemma \ref{Lm-5}(\ref{Lm-5iii}).  The inclusion (\ref{Eq-52}) yields $\pi_{\mathcal{J}}(E_{w'}^{*}) = \sum_{i=1}^{|\mathcal{J}|} b_{i}\alpha_{i}$ with $b_{i}\geq0$ for all $i$. Thus, $\pi_{\mathcal{J}}(E^{*}_{w'}) = -\alpha_{k} + \frac{\widetilde{A}_{ww}}{|H|}\alpha_{l} = b_{1}\alpha_{1} + b_{2}\alpha_{2} + \ldots + b_{|\mathcal{J}|}\alpha_{|\mathcal{J}|}$, which implies that $b_{k}=-1$ by linear independence of $\alpha_{i}$'s, which is a contradiction.

Finally, we show that $-\pi_{\mathcal{J}}(r_{[x]})+\sum_{v\in \mathcal{V}}(\delta_{v}-2) \pi_{\mathcal{J}}(E^{*}_{v}) + \textnormal{int}\, (\pi_{\mathcal{J}}(\mathcal{S}'))$ is included in  $\Lambda^{\mathfrak{c}} (g_{\mathcal{J}})$ (cf. \ref{Eq-BigLambdaf}).  Since $ \mathbf{t}_{\mathcal{J}}^{-r_{h-[x]}-r_{[x]}}  f_{h}(\mathbf{t}_{\mathcal{J}}) \prod_{j \in \mathcal{J}} \frac{\mathbf{t}_{\mathcal{J}}^{E_{j}}}{1-\mathbf{t}_{\mathcal{J}}^{E_{j}}}$ is a summand of $ \mathbf{t}_{\mathcal{J}}^{-r_{h-[x]}-r_{[x]}}  f_{H}(\mathbf{t}_{\mathcal{J}}) \prod_{j \in \mathcal{J}} \frac{\mathbf{t}_{\mathcal{J}}^{E_{j}}}{1-\mathbf{t}_{\mathcal{J}}^{E_{j}}}$, we have 
$\Lambda^{\mathfrak{c}} \Big( \mathbf{t}_{\mathcal{J}}^{-r_{h-[x]}-r_{[x]}}  f_{H}(\mathbf{t}_{\mathcal{J}}) \prod_{j \in \mathcal{J}} \frac{\mathbf{t}_{\mathcal{J}}^{E_{j}}}{1-\mathbf{t}_{\mathcal{J}}^{E_{j}}} \Big) \subset \Lambda^{\mathfrak{c}}(g_{\mathcal{J}})$, and similar proof as in Proposition \ref{prop:uni} shows that
$-\pi_{\mathcal{J}}(r_{[x]})+\sum_{v\in \mathcal{V}}(\delta_{v}-2) \pi_{\mathcal{J}}(E^{*}_{v}) + \textnormal{int}\, (\pi_{\mathcal{J}}(\mathcal{S}'))$ is contained in
$
\Lambda^{\mathfrak{c}} \Big(  \mathbf{t}_{\mathcal{J}}^{-r_{h-[x]}-r_{[x]}}  f_{H}(\mathbf{t}_{\mathcal{J}}) \prod_{j \in \mathcal{J}} \frac{\mathbf{t}_{\mathcal{J}}^{E_{j}}}{1-\mathbf{t}_{\mathcal{J}}^{E_{j}}} \Big)$.
\end{proof}

\begin{proof}[Proof of Theorem \ref{Thm-2}]
We have decomposition
\begin{equation}\label{Eq-53}
\{\mathcal{I}  : \emptyset \neq \mathcal{I} \subseteq \mathcal{V}\} = 
\biguplus_{\mathcal{J} \in \mathcal{V}_{tree}} \{ \mathcal{I}  : \mathcal{E}_{\mathcal{J}} \subseteq \mathcal{I} \subseteq \mathcal{J} \},
\end{equation}
where $\mathcal{V}_{tree} = \{\mathcal{J} \subseteq \mathcal{V} : \mathcal{T}_{\mathcal{J}}\textnormal{ is a subtree}\}$ and $\mathcal{E}_{\mathcal{J}}$ is the end-vertex set of $\mathcal{T}_{\mathcal{J}}$. Moreover, if $\mathcal{J}\in \mathcal{V}_{tree}$ such that $\mathcal{E}_{\mathcal{J}} \neq \mathcal{J}$ then 
\begin{gather}
\label{Eq-54}
\sum_{\mathcal{E}_{\mathcal{J}} \subseteq \mathcal{I} \subseteq \mathcal{J}} (-1)^{|\mathcal{I}|-1} \textnormal{Coeff} \bigg(  \textnormal{T} \bigg[  \mathbf{t}_{\mathcal{I}}^{-r_{h-[x]}}  f_{h}(\mathbf{t}_{\mathcal{I}})\prod_{i\in \mathcal{I}} \frac{\mathbf{t}_{\mathcal{I}}^{E_{i}}}{ 1 - \mathbf{t}_{\mathcal{I}}^{E_{i}} } \bigg], \mathbf{t}_{\mathcal{I}}^{x} \bigg)=
\\
\textnormal{Coeff} \bigg(  \textnormal{T} \bigg[  \mathbf{t}_{\mathcal{J}}^{-r_{h-[x]}} f_{h}(\mathbf{t}_{\mathcal{J}})\prod_{j\in \mathcal{J}} \frac{\mathbf{t}_{\mathcal{J}}^{E_{j}}}{ 1 - \mathbf{t}_{\mathcal{J}}^{E_{j}} } \bigg], \mathbf{t}_{\mathcal{J}}^{x} \bigg)
\sum_{\mathcal{E}_{\mathcal{J}} \subseteq \mathcal{I} \subseteq \mathcal{J}} (-1)^{|\mathcal{I}|-1} =0
\notag
\end{gather}
by Lemma \ref{Lm-8}. Furthermore,  the only subtrees $\mathcal{T}_{\mathcal{J}}$ of $\mathcal{T}$ with $\mathcal{J} = \mathcal{E}_{\mathcal{J}}$ are of form $\mathcal{J}=\{v\}$ with $v\in \mathcal{V}$ and $\mathcal{J}=\{v,w\}$ with $\overline{vw}$ edge of $\mathcal{T}$. Hence, the  decomposition (\ref{Eq-53}) of index sets and cancelations from (\ref{Eq-54})  yield the simplified formula for $Q_{h}$.
\end{proof}

\subsection{Structure theorem: orbifold version}\label{s:strcforb}
As we already discussed in Section \ref{ss:rps}, it is rather important to look at the `reduced' function $f_{h}(\mathbf{t}_{\mathcal{N}})$ and its series $Z_{h}(\mathbf{t}_{\mathcal{N}})$, since it reduced the amount of computation significantly and contains the same information about the Seiberg--Witten invariant. 
These facts  emphasize the importance 
of proving the reduced version of Theorem \ref{Thm-2} and Corollary \ref{cor:structure}. These will lead to a generalization of the polyonomial part of the reduced Poincar\'e series from one and two nodal graphs (cf. Section \ref{s:polpartsec}). 

Similarly to Section \ref{sec:coeff}, the  counting function  of the reduced series $Z_{h}(\mathbf{t}_{\mathcal{N}}) = \textnormal{T}[f_{h}(\mathbf{t}_{\mathcal{N}})]$ can be written as
$$
Q^{red}_{h}(x) 
= 
\sum_{\emptyset \neq \mathcal{I} \subseteq \mathcal{\mathcal{N}}} (-1)^{|\mathcal{I}|-1} 
\Coeff \bigg(  \textnormal{T} \bigg[ \mathbf{t}_{\mathcal{I}}^{-r_{h-[x]}} f_{h}(\mathbf{t}_{\mathcal{I}}) \prod_{i\in \mathcal{I}} \frac{\mathbf{t}_{\mathcal{I}}^{E_{i}}}{1 - \mathbf{t}_{\mathcal{I}}^{E_{i}}} \bigg] ,\, \mathbf{t}_{\mathcal{I}}^{x} \bigg).
$$
 This can be simplified as follows.

\begin{thm}\label{Prop-2}
For any $x\in \sum_{v\in \mathcal{V}}(\delta_{v}-2)E^{*}_{v} + \textnormal{int}\, (\mathcal{S}')$  we have
\begin{align*}
Q_{h}^{red}(x) 
={}&
\sum_{v\in \mathcal{N}} \textnormal{Coeff}\left(  \textnormal{T} \left[ t_{v}^{-r_{h-[x]}} f_{h}(t_{v}) \frac{t_{v}}{1-t_{v}}\right], t_{v}^{x}
\right)
\\
&{}-\sum_{\overline{vw}\textnormal{ edge of } \mathcal{T}^{orb}} \textnormal{Coeff}\left(  \textnormal{T} \left[ \mathbf{t}_{vw}^{-r_{h-[x]}} f_{h}(\mathbf{t}_{vw}) \frac{t_{v}t_{w}}{(1-t_{v})(1-t_{w})}\right], \mathbf{t}_{vw}^{x}
\right),
\end{align*}
where $\mathcal{T}^{orb}$ is the orbifold graph of $\mathcal{T}$.
\end{thm}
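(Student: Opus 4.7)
My approach would closely follow the pattern of Theorem~\ref{Thm-2}, with the subtrees of $\mathcal{T}$ replaced by subtrees of $\mathcal{T}^{orb}$. The first step is to organize the alternating sum defining $Q^{red}_h$ by grouping the index sets $\mathcal{I}\subseteq\mathcal{N}$ according to the smallest $\mathcal{J}\subseteq\mathcal{N}$ containing $\mathcal{I}$ for which $\mathcal{T}^{orb}_{\mathcal{J}}$ is a subtree (the ``orbifold convex hull'' of $\mathcal{I}$). Writing $\mathcal{E}^{orb}_{\mathcal{J}}$ for the end-vertex set of $\mathcal{T}^{orb}_{\mathcal{J}}$, one automatically has $\mathcal{E}^{orb}_{\mathcal{J}(\mathcal{I})}\subseteq\mathcal{I}$, and hence a partition
$$
\{\mathcal{I}:\emptyset\neq\mathcal{I}\subseteq\mathcal{N}\}
=\biguplus_{\mathcal{J}\in\mathcal{N}^{orb}_{tree}}\{\mathcal{I}:\mathcal{E}^{orb}_{\mathcal{J}}\subseteq\mathcal{I}\subseteq\mathcal{J}\},
$$
with $\mathcal{N}^{orb}_{tree}=\{\mathcal{J}\subseteq\mathcal{N}:\mathcal{T}^{orb}_{\mathcal{J}}\textnormal{ is a subtree}\}$.

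\textbf{Key reduction.} The core of the argument is an orbifold analogue of Lemma~\ref{Lm-8}: for every $\mathcal{J}\in\mathcal{N}^{orb}_{tree}$ and every $\mathcal{I}$ with $\mathcal{E}^{orb}_{\mathcal{J}}\subseteq\mathcal{I}\subseteq\mathcal{J}$, the coefficient function indexed by $\mathcal{I}$ agrees with the one indexed by $\mathcal{J}$. My plan is to deduce this as a direct corollary of Lemma~\ref{Lm-8} itself rather than re-running the Projection Lemma argument. Let $\widehat{\mathcal{J}}\subseteq\mathcal{V}$ be obtained from $\mathcal{J}$ by adjoining all valency-$2$ vertices of $\mathcal{T}$ sitting on the paths in $\mathcal{T}$ which realize the edges of $\mathcal{T}^{orb}_{\mathcal{J}}$. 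A brief check shows that $\mathcal{T}_{\widehat{\mathcal{J}}}$ is a subtree of $\mathcal{T}$ and that its end-vertex set $\mathcal{E}_{\widehat{\mathcal{J}}}$ is precisely $\mathcal{E}^{orb}_{\mathcal{J}}$: nodes interior to $\mathcal{T}^{orb}_{\mathcal{J}}$ acquire at least two neighbours inside $\widehat{\mathcal{J}}$, while the auxiliary valency-$2$ vertices retain valency $2$. Since $\mathcal{E}_{\widehat{\mathcal{J}}}=\mathcal{E}^{orb}_{\mathcal{J}}\subseteq\mathcal{I}\subseteq\mathcal{J}\subseteq\widehat{\mathcal{J}}$, applying Lemma~\ref{Lm-8} twice (once with input $\mathcal{J}$, once with input $\mathcal{I}$) yields
$$
\Coeff\bigl(\textnormal{T}[\mathbf{t}_{\mathcal{J}}^{-r_{h-[x]}}f_h(\mathbf{t}_{\mathcal{J}})\prod_{j\in\mathcal{J}}\tfrac{\mathbf{t}_{\mathcal{J}}^{E_j}}{1-\mathbf{t}_{\mathcal{J}}^{E_j}}],\,\mathbf{t}_{\mathcal{J}}^x\bigr)
=\Coeff\bigl(\textnormal{T}[\mathbf{t}_{\mathcal{I}}^{-r_{h-[x]}}f_h(\mathbf{t}_{\mathcal{I}})\prod_{i\in\mathcal{I}}\tfrac{\mathbf{t}_{\mathcal{I}}^{E_i}}{1-\mathbf{t}_{\mathcal{I}}^{E_i}}],\,\mathbf{t}_{\mathcal{I}}^x\bigr).
$$

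\textbf{Conclusion and obstacle.} Given this invariance, the alternating sum over each class $\{\mathcal{I}:\mathcal{E}^{orb}_{\mathcal{J}}\subseteq\mathcal{I}\subseteq\mathcal{J}\}$ factors as the common coefficient function weighted by $\sum_{\mathcal{E}^{orb}_{\mathcal{J}}\subseteq\mathcal{I}\subseteq\mathcal{J}}(-1)^{|\mathcal{I}|-1}$, which vanishes unless $\mathcal{J}=\mathcal{E}^{orb}_{\mathcal{J}}$. The latter condition forces $\mathcal{J}$ to be either a singleton $\{v\}$ with $v\in\mathcal{N}$ or a pair $\{v,w\}$ with $\overline{vw}$ an edge of $\mathcal{T}^{orb}$, producing exactly the two sums in the statement. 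The only delicate point I foresee is the identification $\mathcal{E}_{\widehat{\mathcal{J}}}=\mathcal{E}^{orb}_{\mathcal{J}}$, which however is essentially tautological from the construction of $\mathcal{T}^{orb}$. In effect the theorem will be a purely combinatorial repackaging of Theorem~\ref{Thm-2} via the closure operation $\mathcal{J}\mapsto\widehat{\mathcal{J}}$, with no new analytic input required beyond Lemma~\ref{Lm-8}.
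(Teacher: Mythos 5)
Your proposal is correct and follows essentially the same route as the paper: the paper likewise groups the index sets $\mathcal{I}\subseteq\mathcal{N}$ by the minimal $\mathcal{J}\in\mathcal{N}_{tree}$ containing them, passes to the $\mathcal{T}$-convex hull (your $\widehat{\mathcal{J}}$ is the paper's $\bar{\mathcal{I}}$), applies Lemma \ref{Lm-8} twice to equate the $\mathcal{I}$- and $\mathcal{J}$-coefficient functions, and concludes by the alternating-sign cancellation. Your explicit verification that $\mathcal{E}_{\widehat{\mathcal{J}}}=\mathcal{E}^{orb}_{\mathcal{J}}$ is the same observation the paper uses implicitly.
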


\begin{proof}
The proof goes similarly as in the case of Theorem \ref{Thm-2}.
For any $\mathcal{I} \subseteq \mathcal{N} $  let  $\bar{\mathcal{I}}$ be the minimal subset of $\mathcal{V}$ with property $\mathcal{I}\subseteq \bar{\mathcal{I}}$ and $\mathcal{T}_{\bar{\mathcal{I}}}$ is a subtree of $\mathcal{T}$. Denote $\mathcal{N}_{tree} := \{\mathcal{J}\subseteq \mathcal{N} : \mathcal{T}^{orb}_{\mathcal{J}} \textnormal{ subtree of }\mathcal{T}^{orb}\}$. Then $\mathcal{J}=\bar{\mathcal{I}} \cap \mathcal{N}$ is the minimal set of $\mathcal{N}$ such that $\mathcal{I}\subseteq \mathcal{J}$ and $\mathcal{J} \in \mathcal{N}_{tree}$. Since $\mathcal{E}_{\bar{\mathcal{I}}}\subseteq \mathcal{I} , \mathcal{J} \subseteq \bar{\mathcal{I}}$, Lemma \ref{Lm-8} implies that 
$$
\textnormal{Coeff} \bigg( \textnormal{T} \bigg[ \mathbf{t}_{\mathcal{I}}^{-r_{h-[x]}}  f_{h}(\mathbf{t}_{\mathcal{I}}) \prod_{i \in \mathcal{I}} \frac{\mathbf{t}_{\mathcal{I}}^{E_{i}}}{1-\mathbf{t}_{\mathcal{I}}^{E_{i}}} \bigg], \mathbf{t}_{\mathcal{I}}^{x} \bigg)
=
\textnormal{Coeff} \bigg(  \textnormal{T} \bigg[  \mathbf{t}_{\mathcal{J}}^{-r_{h-[x]}} f_{h}(\mathbf{t}_{\mathcal{J}}) \prod_{j \in \mathcal{J}} \frac{\mathbf{t}_{\mathcal{J}}^{E_{j}}}{1-\mathbf{t}_{\mathcal{J}}^{E_{j}}} \bigg], \mathbf{t}_{\mathcal{J}}^{x} \bigg).
$$
Moreover, for any $\mathcal{J}\in \mathcal{N}_{tree}$ with $\mathcal{E}_{\mathcal{J}}\neq \mathcal{J}$ one has
\begin{gather*}
\sum_{\mathcal{J} \in \mathcal{N}_{tree}} \sum_{\mathcal{E}_{\mathcal{J}} \subseteq \mathcal{I} \subseteq \mathcal{J} }  
(-1)^{|\mathcal{I}|-1} 
\Coeff \bigg(  \textnormal{T} \bigg[ \mathbf{t}_{\mathcal{I}}^{-r_{h-[x]}} f_{h}(\mathbf{t}_{\mathcal{I}}) \prod_{i\in \mathcal{I}} \frac{\mathbf{t}_{\mathcal{I}}^{E_{i}}}{1 - \mathbf{t}_{\mathcal{I}}^{E_{i}}} \bigg] ,\, \mathbf{t}_{\mathcal{I}}^{x} \bigg)
\\
=
\sum_{\mathcal{J} \in \mathcal{N}_{tree}} 
\Coeff \bigg(  \textnormal{T} \bigg[ \mathbf{t}_{\mathcal{J}}^{-r_{h-[x]}} f_{h}(\mathbf{t}_{\mathcal{J}}) \prod_{i\in \mathcal{J}} \frac{\mathbf{t}_{\mathcal{J}}^{E_{i}}}{1 - \mathbf{t}_{\mathcal{J}}^{E_{i}}} \bigg] ,\, \mathbf{t}_{\mathcal{J}}^{x} \bigg)
\sum_{\mathcal{E}_{\mathcal{J}} \subseteq \mathcal{I} \subseteq \mathcal{J} }  
(-1)^{|\mathcal{I}|-1} 
=0.
\end{gather*}
Furthermore, subsets $\mathcal{J}\in \mathcal{N}_{tree}$ with $\mathcal{E}_{\mathcal{J}}=\mathcal{J}$ are one element subsets $\mathcal{J}=\{v\}$ with $v\in \mathcal{N}$ and two element subsets $\mathcal{J}=\{v,w\}$ with $\overline{vw}$ edge of $\mathcal{T}^{orb}$. Thus, the decomposition and  cancellations yield the simplified form of $Q^{red}_{h}$.
\end{proof}

Again, we can express the above simplified formula in terms of counting functions.

\begin{cor}\label{Cor-1}
Let $\delta_{v}^{orb}$ be the valency of $v\in \mathcal{N}$ in the orbifold graph $\mathcal{T}^{orb}$. Then
\begin{align*}
Q_{h}^{red}(x) 
={}&
\sum_{\overline{vw}\textnormal{ edge of }\mathcal{T}^{orb}}
Q^{vw}_{h}(x)
- \sum_{v\in \mathcal{N}} (\delta_{v}^{orb}-1) Q^{v}_{h}(x) 
\end{align*}
for any $x\in \sum_{v\in \mathcal{V}}(\delta_{v}-2)E^{*}_{v} +\textnormal{int}\, (\mathcal{S}')$. In particular, 
$$
\textnormal{pc}(Z_{h}(\mathbf{t}_{\mathcal{N}})) 
= 
\sum_{\overline{vw}\textnormal{ edge of }\mathcal{T}^{orb}} \textnormal{pc}(Z_{h}(\mathbf{t}_{vw}))
- 
\sum_{v\in \mathcal{N}} (\delta_{v}^{orb}-1)  \textnormal{pc}(Z_{h}(t_{v}))
$$
(here the periodic constants are associated with appropriate projections of the Lipman cone).
\end{cor}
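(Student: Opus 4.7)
The plan is to deduce Corollary \ref{Cor-1} from Theorem \ref{Prop-2} by a purely combinatorial regrouping together with one application of Lemma \ref{Lm-8}; no new residue or quasipolynomial machinery is required, and the argument runs parallel to the one behind Corollary \ref{cor:structure}, with $\mathcal{T}$ replaced by the orbifold graph $\mathcal{T}^{orb}$.

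First, I would unfold the definition of the two-variable counting function. Exactly as in Section \ref{sec:coeff}, for an edge $\overline{vw}$ of $\mathcal{T}^{orb}$ one has, by inclusion--exclusion on $\mathcal{I}\subseteq\{v,w\}$,
\begin{equation*}
Q^{vw}_h(x) = A_v(x) + A_w(x) - B_{vw}(x),
\end{equation*}
where $A_v,A_w$ are the single-index coefficient functions and $B_{vw}$ is the two-index edge coefficient function that appears (with a minus sign) in Theorem \ref{Prop-2}. Invoking Lemma \ref{Lm-8} with the nested trees $\{v\}\subset\{v,w\}$ and $\{w\}\subset\{v,w\}$ identifies $A_v(x)=Q^v_h(x)$ and $A_w(x)=Q^w_h(x)$ on $\sum_{v}(\delta_v-2)E^*_v+\textnormal{int}\,(\mathcal{S}')$, so that
\begin{equation*}
B_{vw}(x) = Q^v_h(x)+Q^w_h(x)-Q^{vw}_h(x).
\end{equation*}

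Next I would sum over edges of $\mathcal{T}^{orb}$ and use the handshake identity $\sum_{\overline{vw}\textnormal{ edge of }\mathcal{T}^{orb}}(Q^v_h+Q^w_h)=\sum_{v\in\mathcal{N}}\delta_v^{orb}\,Q^v_h$ to obtain
\begin{equation*}
\sum_{\overline{vw}\textnormal{ edge of }\mathcal{T}^{orb}} B_{vw}(x) = \sum_{v\in\mathcal{N}}\delta_v^{orb}\,Q^v_h(x) - \sum_{\overline{vw}\textnormal{ edge of }\mathcal{T}^{orb}} Q^{vw}_h(x).
\end{equation*}
Substituting this into the identity of Theorem \ref{Prop-2} and collecting terms yields the first claim of the corollary.

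For the periodic constant formula, the strategy is to observe that by Theorem \ref{Thm-A} (and its refinement via Proposition \ref{prop:uni}) each of $Q^{red}_h$, $Q^{vw}_h$ and $Q^v_h$ is represented on the relevant projected Lipman cone by a unique quasipolynomial whose constant term is, by (\ref{eq:PCDEF}), the corresponding periodic constant. The identity proved above extends from the affine subcone where it holds to an identity between these representing quasipolynomials, and specializing at $x=0$ produces the stated periodic constant decomposition. The main (mild) obstacle is a bookkeeping one: one has to check that the region of validity furnished by Proposition \ref{prop:uni} for each projection to $\{v\}$, $\{v,w\}$, $\mathcal{N}$ is full-dimensional inside the projection of $\mathcal{S}'_{\mathbb{R}}$ used to define the respective periodic constant, so that the quasipolynomial identity does propagate. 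This is essentially immediate from the explicit form of the domains appearing in the proof of Proposition \ref{prop:uni}.
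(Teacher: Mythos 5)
Your proposal is correct and is essentially the paper's own (implicit) argument: the corollary is obtained from Theorem \ref{Prop-2} by the inclusion--exclusion identity $Q^{vw}_h=Q^v_h+Q^w_h-B_{vw}$ for each orbifold edge, the handshake count $\sum_{\overline{vw}}(Q^v_h+Q^w_h)=\sum_{v\in\mathcal{N}}\delta^{orb}_v Q^v_h$, and evaluation of the resulting quasipolynomial identity at $x=0$ for the periodic constants. The only superfluous step is the appeal to Lemma \ref{Lm-8} to identify $A_v$ with $Q^v_h$: the one-index term in the inclusion--exclusion for $Q^{vw}_h$ is literally the coefficient function defining $Q^v_h$ (since $f_h(\mathbf{t}_{vw})|_{t_w=1}=f_h(t_v)$), so that identification holds by definition for all $x$.
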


\section{Polynomial generalization of the Seiberg--Witten invariant}\label{s:polpartsec}

\subsection{Motivation}
The main goal of this section is to prove that for the reduced rational function $f_h(\mathbf{t}_{\mathcal{N}})$ ($h\in H$) there is a special 
decomposition 
$$f_h(\mathbf{t}_{\mathcal{N}})=P_h(\mathbf{t}_{\mathcal{N}})+f^{-}_h(\mathbf{t}_{\mathcal{N}}),$$
in such a way that $P_h$ is a Laurent polynomial with $P_h(\mathbf{1})=\textnormal{pc}(Z_h(\mathbf{t}_{\mathcal{N}}))$, hence 
$\textnormal{pc}(T[f^{-}_h(\mathbf{t}_{\mathcal{N}})])=0$, and $f^{-}_h(\mathbf{t}_{\mathcal{N}})$ satisfies certain properties.  
Therefore, Theorem \ref{th:JEMS} (cf. N\'emethi \cite{NJEMS}) and \cite[Theorem 5.4.2]{LN} imply that 
$$P_h(\mathbf{1})=-\mathfrak{sw}_{-h*\sigma_{can}}(M)-\frac{(K+2r_h)^2+|\mathcal{V}|}{8},$$
the $r_h$-normalized Seiberg--Witten invariant of $M$ associated with the class $h\in H$ (see Section \ref{ss:sw}).

The construction of the polynomial part has two main motivations:
\begin{enumerate}
 \item It gives a major tool in the computation of the Seiberg--Witten invariant for graphs with any number of nodes. Particular cases 
 were proved in \cite{BN} for one-variable Poincar\'e series, i.e. for graphs with only one node, and in \cite{LN} for graphs having at most two nodes.
 
 \item This will lead to a polynomial generalization of the Seiberg--Witten invariant for negative definite plumbed 3-manifolds. 
 One can expect a finer invariant both from topological and analytical point of view: more precise `connection' with the possible 
 analytic type of normal surface singularities which can be associated with the given 3-manifold.
\end{enumerate}

The main difficulty in the general case is that the chamber decomposition of the (reduced) Lipman cone can be cumbersome. Hence, one can 
not expect a division algorithm for each term of the numerator of $f_h(\mathbf{t}_{\mathcal{N}})$ separately, as in the case of \cite[Theorem 4.5.1]{LN}. 
However, thanks to the structure of the counting function given in the previous section,  we can reduce the construction to one- and two-variable cases.

First of all, we review the one- and two-variable cases from \cite{BN,LN} in a slightly more general setting, then we give the construction of the polynomial part for more variables in Section \ref{ss:polypart}. Finally, we illustrate the method on a graph with 
three nodes.

\subsection{Polynomial part in one-variable case}\label{ss:pcrevdim1}
Fix a vertex $v \in \mathcal{V}$ and consider a rational function  
$$
f_{H}(\mathbf{t}) = \frac{R_{H}(\mathbf{t})}{\prod_{i=1}^{m}(1-h_{i}\mathbf{t}^{\alpha_{i}})  }
$$
with coefficients in $\mathbb{Z}[H]$. In this subsection we consider $f_{H}(\mathbf{t})$ as function in $t_{v}$ and other variables $t_{u}$, $u\neq v$ are regarded as coefficients. Moreover, we assume that $R_{H}(\mathbf{t})$ and $1 - h_{i}\mathbf{t}^{\alpha_{i}}$ are supported on $\pi_{v}(L') \cap \mathbb{Q}_{\geq0} \langle E_{v} \rangle$ (as series in $t_{v}$).

\begin{lemma}{(cf. \cite[Lemma 7.0.2]{BN})}\label{lem:pcdim1}
There is a unique decomposition
\begin{equation}\label{Eq-56}
f_{H}(\mathbf{t}) = P_{H}^{v}(\mathbf{t}) + \frac{P'_{H}(\mathbf{t})}{\prod_{i=1}^{m}(1 - h_{i} \mathbf{t}^{\alpha_{i}})}
\end{equation}
such that as one-variable series in $t_{v}$
\begin{enumerate}
\item[(i)]
$P^{v}_{H}(\mathbf{t})$ is finitely supported on $\pi_{v}(L') \cap \mathbb{Q}_{\geq0} \langle E_{v} \rangle$,
 
\item[(ii)]
$P'_{H}(\mathbf{t})$ is finitely supported on $\pi_{v}(L') \cap \sum_{i=1}^{m}[0,1)\pi_{v}(\alpha_{i})$.
\end{enumerate}
Moreover, $\textnormal{pc}_{h}(\textnormal{T}[f_{H}(t_{v})]) = P^{v}_{h}(\mathbf{1})$, where $P^{v}_{H}(\mathbf{t}) = \sum_{h\in H} P^{v}_{h}(\mathbf{t}) \cdot h$.
\end{lemma}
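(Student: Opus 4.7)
The plan is to follow the division-algorithm pattern of \cite[Lemma 7.0.2]{BN}, adapted to the equivariant ($\mathbb{Z}[H]$-valued) setting and to the presence of the auxiliary variables $t_u$, $u\neq v$, which I treat as coefficients.

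For existence, I would view $f_H(\mathbf{t})$ as a rational function in the single variable $t_v$ over the Laurent ring $R:=\mathbb{Z}[H][t_u^{\pm 1/|H|}:u\neq v]$. After absorbing any factors with $\pi_v(\alpha_i)=0$ into $R_H$, each remaining $1-h_i\mathbf{t}^{\alpha_i}$ has strictly positive $t_v$-degree and leading $t_v$-coefficient $-h_i\prod_{u\neq v}t_u^{\alpha_{i,u}}$, a unit of $R$. Consequently $D:=\prod_i(1-h_i\mathbf{t}^{\alpha_i})$ has invertible leading $t_v$-coefficient and ordinary Euclidean division of $R_H$ by $D$ in the variable $t_v$ produces $R_H=P^v_H\cdot D+P'_H$ with $\deg_{t_v}(P'_H)<\deg_{t_v}(D)=\sum_i\pi_v(\alpha_i)$, which after rearrangement is (\ref{Eq-56}). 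Since $D$ has constant $t_v$-term $1$ and $R_H$ has non-negative $t_v$-exponents, no intermediate remainder can acquire negative $t_v$-exponents, so $P^v_H$ and $P'_H$ both sit in $\pi_v(L')\cap\mathbb{Q}_{\geq 0}\langle E_v\rangle$ as required. Uniqueness is then immediate: a second decomposition would give $(P^v_H-\widetilde P^v_H)\cdot D=\widetilde P'_H-P'_H$, whose right-hand side has $t_v$-degree strictly less than $\deg_{t_v}(D)$, while the left-hand side, if nonzero, has $t_v$-degree at least $\deg_{t_v}(D)$ thanks to the unit leading coefficient of $D$; this forces both sides to vanish.

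For the periodic-constant identity, I would decompose $\textnormal{T}[f_H(t_v)]=\textnormal{T}[P^v_H]+\textnormal{T}[P'_H/D]$ and exploit additivity of $\textnormal{pc}_h$. The Laurent polynomial $P^v_H$ is finitely supported, so its counting function on the $h$-class stabilises for $x$ outside the support to the sum $P^v_h(\mathbf{1})$ of its coefficients, giving $\textnormal{pc}_h(\textnormal{T}[P^v_H])=P^v_h(\mathbf{1})$. For the proper fraction $P'_H/D$, the bound $\deg_{t_v}(P'_H)<\deg_{t_v}(D)$ means that its partial-fraction expansion in $t_v$ has no polynomial summand, hence its Taylor coefficients are given by a single quasipolynomial $p(l)$ already for \emph{all} $l\geq 0$; summing $p(l)$ over $0\leq l<x$ via the usual Faulhaber-type formula yields a quasipolynomial in $x$ that vanishes at $x=0$, so $\textnormal{pc}_h(\textnormal{T}[P'_H/D])=0$. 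Combining the two contributions gives $\textnormal{pc}_h(\textnormal{T}[f_H(t_v)])=P^v_h(\mathbf{1})$.

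The main obstacle is carefully propagating the $\mathbb{Z}[H]$-equivariance and the parametric dependence on $t_u$, $u\neq v$, through the essentially one-dimensional argument. I would handle this cleanly by Fourier-decomposing $f_H=\sum_{\rho\in\widehat H}\rho(f_H)$ at the outset (as in Section~\ref{sec:coeff}), so that for each character the division and periodic-constant computation reduce to the scalar one-variable case. Alternatively, the vanishing $\textnormal{pc}_h(\textnormal{T}[P'_H/D])=0$ can be extracted directly from Remark \ref{Rk-2} applied to the one-term specialisation $\mathcal{I}=\{v\}$ of formula (\ref{eq:q1}), with a small boundary check because the support of $P'_H$ lies in the half-open box $\sum_i[0,1)\pi_v(\alpha_i)$ rather than the fully open one demanded by the remark.
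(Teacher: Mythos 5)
Your proposal is correct and follows essentially the same route as the paper: existence by division with remainder in the variable $t_v$ (the unit leading coefficient of the denominator being implicit there), uniqueness by comparing the supports/degrees of the two summands, and the vanishing of $\textnormal{pc}_h$ of the proper fraction via Remark \ref{Rk-2}. Your extra observations — the Fourier decomposition over $\widehat H$ and the boundary check that the half-open support of $P'_H$ lands in the open box only after multiplying by the counting kernel $t_v/(1-t_v)$ — are exactly the details the paper leaves implicit, and they are handled correctly.
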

We refer to $P^{v}_{h}(\mathbf{t})$ as $h$-equivariant polynomial part of $f_{H}(\mathbf{t})$ as function in $t_{v}$.

\begin{proof}
To show the uniqueness, note that the leading term of $P_H^{v}(\mathbf{t}) \prod_{i=1}^{m}(1 - h_{i}\mathbf{t}^{\alpha_{i}})$ as series in $t_{v}$ is supported in the region $\pi_{v}(L') \cap \big( \sum_{i=1}^{m} \pi_{v}(\alpha_{i}) + \mathbb{Q}_{\geq0} \langle E_{v} \rangle \big)$, which is disjoint from the support of $P'_{H}(\mathbf{t})$. Thus, if $f_{H}(\mathbf{t})=0$  then $P^{v}_{H}(\mathbf{t})$ and $P'_{H}(\mathbf{t})$ both vanish.

Considering $f_{H}(\mathbf{t})$ as rational fraction in $t_{v}$, division by remainder yields decomposition (\ref{Eq-56}).

For the last part, by Remark \ref{Rk-2} we have $\textnormal{pc}_{h} \left(\textnormal{T} \left[ \frac{P'_{H}(t_{v})}{\prod_{i=1}^{m}(1 - h_{i} t_{v}^{\alpha_{i}})} \right] \right) =0$, thus $\textnormal{pc}_{h}(\textnormal{T}[f_{H}(t_{v})]) = \textnormal{pc}_{h}(P^{v}_{H}(t_{v})) = P^{v}_{h}(\mathbf{1})$.
\end{proof}

\subsection{Polynomial part in two-variable case}\label{ss:pcrevdim2}
Fix vertices $v,w\in \mathcal{V}$ and introduce shorter notation $\pi_{vw}$ for the projection $\pi_{\{v,w\}}$. Consider the rational function
$$
f_{H}(\mathbf{t}) = \frac{R_{H}(\mathbf{t})}{\prod_{i=1}^{m} (1 - h_{i}\mathbf{t}^{\alpha_{i}})\prod_{j=1}^{n} (1 - g_{j}\mathbf{t}^{\beta_{j}})}
$$
with coefficients in $\mathbb{Z}[H]$. In this subsection we consider $f_{H}(\mathbf{t})$ as a function in $t_{v}$ and $t_{w}$, and the other variables $t_{u}$, $u\neq v,w$ are taken as coefficients. We impose the following conditions on $f_{H}(\mathbf{t})$.  
There are $\alpha, \beta\in \mathbb{R}_{>0}\langle E_{v},E_{w}\rangle$ such that $\pi_{vw}(\alpha_{i})$ and $\pi_{vw}(\beta_{j})$ are positive integer multiples of fixed vectors $\alpha$ and $\beta$, respectively. Moreover, about their position we assume that $\beta \in \mathbb{R}_{>0}\langle E_{w},\alpha\rangle$ and $\alpha \in \mathbb{R}_{>0}\langle \beta, E_{v}\rangle$. Finally, $R_{H}(\mathbf{t})$, $1-h_{i}\mathbf{t}^{\alpha_{i}}$, $1-g_{j}\mathbf{t}^{\beta_{j}}$ are supported on $L'$, moreover the projection $R_{H}(\mathbf{t})$ has support in $\mathbb{Q}_{\geq0}\langle\alpha,\beta\rangle$ (as series in $t_{v}$ and $t_{w}$).

\begin{lemma}\label{Lm-9}(cf. \cite[Theorem 4.5.1]{LN})\label{lem:pcdim2}
There is a unique decomposition
\begin{align}
\label{Eq-55}
\frac{R_{H}(\mathbf{t})}{\prod_{i=1}^{m} (1 - h_{i}\mathbf{t}^{\alpha_{i}})\prod_{j=1}^{n} (1 - g_{j}\mathbf{t}^{\beta_{j}})}
={}&
P^{vw}_{H}(\mathbf{t})
+
\frac{P'_{H}(\mathbf{t})}{\prod_{i=1}^{m}(1-h_{i}\mathbf{t}^{\alpha_{i}})}
+
\frac{P''_{H}(\mathbf{t})}{\prod_{j=1}^{n}(1-g_{j}\mathbf{t}^{\beta_{j}})}
\\
&{}+
\frac{P'''_{H}(\mathbf{t})}{\prod_{i=1}^{m} (1 - h_{i}\mathbf{t}^{\alpha_{i}})\prod_{j=1}^{n} (1 - g_{j}\mathbf{t}^{\beta_{j}})},
\notag
\end{align}
such that as two-variable series in $t_{v}$ and $t_{w}$ 
\begin{enumerate}
\item[(i)] 
$P^{vw}_{H}(\mathbf{t})$ is finitely supported on $\pi_{vw}(L')\setminus \mathbb{Q}_{<0}\langle E_{v},E_{w}\rangle$,

\item[(ii)]
$P'_{H}(\mathbf{t})$ is finitely  supported on $\pi_{vw}(L') \cap \big( Q_{\leq 0 } \langle E_{v} \rangle + \sum_{i=1}^{m} [0,1)\pi_{vw}(\alpha_{i})\big)$,

\item[(iii)]
$P''_{H}(\mathbf{t})$ is finitely  supported on $\pi_{vw}(L') \cap \big( Q_{\leq 0 } \langle E_{w} \rangle + \sum_{j=1}^{n} [0,1)\pi_{vw}(\beta_{j})\big)$,

\item[(iv)]
$P'''_{H}(\mathbf{t})$ is finitely  supported on $\pi_{vw}(L') \cap \big(  \sum_{i=1}^{n} [0,1)\pi_{vw}(\alpha_{i}) + \sum_{j=1}^{n} [0,1)\pi_{vw}(\beta_{j}) \big)$. 
\end{enumerate} 
 Furthermore,
$
\textnormal{pc}^{\mathfrak{c}}_{h}(\textnormal{T}[f_{H}(\mathbf{t}_{vw})]) = P^{vw}_{h}(\mathbf{1}),
$ 
where $P^{vw}_{H}(\mathbf{t}) = \sum_{h\in H}P^{vw}_{h}(\mathbf{t})\cdot h$ and $\mathfrak{c} = \mathbb{R}_{>0}\langle \alpha, \beta \rangle$.
\end{lemma}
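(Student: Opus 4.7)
The plan is to construct the decomposition by iterated Euclidean-type division in the two directions $\alpha$ and $\beta$, generalizing the one-variable algorithm of Lemma~\ref{lem:pcdim1}. Since $\pi_{vw}(\alpha_i)$ is a positive integer multiple of the fixed vector $\alpha$ for each $i$, the factor $D_\alpha := \prod_{i=1}^m (1-h_i \mathbf{t}^{\alpha_i})$ behaves, up to a monomial shift, like a polynomial in the single ``variable'' $\mathbf{t}^\alpha$. Hence any Laurent polynomial in $t_v, t_w$ admits division with remainder by $D_\alpha$ along the $\alpha$-direction, with the remainder lying in a strip of the form $\mathbb{Q}\langle\beta\rangle + \sum_i [0,1)\pi_{vw}(\alpha_i)$ and the quotient unrestricted. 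The analogous statement holds for $D_\beta := \prod_{j=1}^n(1-g_j\mathbf{t}^{\beta_j})$.

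Concretely, I would first divide $R_H$ by $D_\alpha$, writing $R_H = A_H D_\alpha + B_H$, to obtain
\[
f_H = \frac{A_H}{D_\beta} + \frac{B_H}{D_\alpha D_\beta},
\]
then divide both $A_H$ and $B_H$ by $D_\beta$ in the $\beta$-direction. This produces four summands which one declares to be $P^{vw}_H, P''_H, P'_H, P'''_H$ respectively. The support conditions (i)--(iv) then follow by tracking these strips, using crucially the angular hypothesis $\alpha \in \mathbb{R}_{>0}\langle\beta, E_v\rangle$ and $\beta\in \mathbb{R}_{>0}\langle E_w,\alpha\rangle$ together with the assumption that $R_H$ is supported in $\mathbb{Q}_{\geq 0}\langle\alpha,\beta\rangle$: this positioning is what forces the quotient $P^{vw}_H$ to avoid $\mathbb{Q}_{<0}\langle E_v, E_w\rangle$ and the two ``mixed'' remainders to live in the half-strips along $E_v$ and $E_w$ asserted in (ii) and (iii).

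Uniqueness is a disjointness argument: if $P^{vw}_H + P'_H/D_\alpha + P''_H/D_\beta + P'''_H/D_\alpha D_\beta = 0$ with each piece in its prescribed support, then multiplying through by $D_\alpha D_\beta$ yields a vanishing sum of four Laurent polynomials whose supports lie in pairwise disjoint regions of $\pi_{vw}(L')$ (distinguished by their ``leading terms'' in the $\alpha$ and $\beta$ directions), forcing each to vanish. For the periodic constant, Remark~\ref{Rk-2} extended to $\mathbb{Z}[H]$-coefficients via Remark~\ref{Rk-4} directly gives $\textnormal{pc}^{\mathfrak{c}}_h(\textnormal{T}[P'''_H/D_\alpha D_\beta]) = 0$, since $P'''_H$ is supported in the half-open parallelepiped $\sum[0,1)\pi_{vw}(\alpha_i)+\sum[0,1)\pi_{vw}(\beta_j)$. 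For $P'_H/D_\alpha$ and $P''_H/D_\beta$, I would show that the Taylor expansion is supported outside an affine translate of $\mathfrak{c}$, so the counting function vanishes on a full affine subcone of $\mathfrak{c}$ and the periodic constant is zero. Summing the contributions then yields $\textnormal{pc}^\mathfrak{c}_h(\textnormal{T}[f_H(\mathbf{t}_{vw})]) = P^{vw}_h(\mathbf{1})$.

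The main obstacle is the bookkeeping of the four support regions through the successive divisions; in particular, pinning down that $P'_H$ and $P''_H$ land in the narrow half-strips rather than larger regions requires the precise angular hypothesis on $\alpha, \beta, E_v, E_w$ together with the special shape of the support of $R_H$. Once the support geometry is established, both uniqueness and the periodic-constant formula follow routinely from the one-variable building block and from Remarks~\ref{Rk-2}--\ref{Rk-4}.
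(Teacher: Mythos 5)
There is a genuine gap in the existence part: your two-round division scheme does not produce pieces satisfying the support conditions (ii) and (iii), and the angular hypothesis alone cannot repair this. Write $\Sigma_\alpha=\sum_{i=1}^m[0,1)\pi_{vw}(\alpha_i)$ and $\Sigma_\beta=\sum_{j=1}^n[0,1)\pi_{vw}(\beta_j)$. Your first division leaves $B_H$ supported in $\Sigma_\alpha+\mathbb{Q}_{\geq0}\langle\beta\rangle$, and dividing $B_H$ by $D_\beta$ along $\beta$ with complementary direction $\alpha$ --- which is forced if the remainder $P'''_H$ is to land in $\Sigma_\alpha+\Sigma_\beta$ as (iv) requires --- leaves the quotient $P'_H$ supported in $\Sigma_\alpha+\mathbb{Q}_{\geq0}\langle\beta\rangle$. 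Since $\alpha\in\mathbb{R}_{>0}\langle\beta,E_v\rangle$ gives $\beta\in\mathbb{Q}_{>0}\alpha+\mathbb{Q}_{<0}E_v$, this half-strip drifts off unboundedly in the $+\alpha$ direction and is \emph{not} contained in $\Sigma_\alpha+\mathbb{Q}_{\leq0}\langle E_v\rangle$, so (ii) fails; the symmetric problem hits (iii) if you use $\alpha$ as the complement for the $A_H$ branch. No choice of complementary directions rescues both branches, because (ii) and (iv) pull the single quotient/remainder split of $B_H$ in incompatible directions: $B_H$ genuinely contains a third component that must end up in $P^{vw}_H$.

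The missing step is a further division in mixed coordinates. The paper first performs the two-dimensional division of $R_H$ by $D_\alpha D_\beta$ in $(\alpha,\beta)$-coordinates, getting $R_H=\rho_1D_\alpha D_\beta+\rho_2D_\alpha+\rho_3D_\beta+\rho_4$ with $\rho_2\subseteq\mathbb{Q}_{\geq0}\langle\alpha\rangle+\Sigma_\beta$, $\rho_3\subseteq\mathbb{Q}_{\geq0}\langle\beta\rangle+\Sigma_\alpha$, $\rho_4\subseteq\Sigma_\alpha+\Sigma_\beta$, and then \emph{re-divides} $\rho_2$ by $D_\beta$ in the $(\beta,E_w)$-frame and $\rho_3$ by $D_\alpha$ in the $(\alpha,E_v)$-frame; this flips the half-strips from the $+\alpha$ (resp.\ $+\beta$) direction to the $-E_w$ (resp.\ $-E_v$) direction, and the extra quotients are absorbed into $P^{vw}_H$ --- which is exactly why (i) is the permissive condition it is. Your uniqueness sketch is essentially the paper's, except that the four supports after clearing denominators are not literally pairwise disjoint: one must peel the terms off one at a time by a leading-term argument with respect to a monomial order, as your parenthetical hints. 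The periodic-constant step also genuinely needs the correct supports, since the vanishing of $\mathrm{pc}^{\mathfrak{c}}$ for $P'_H/D_\alpha$ uses that $P'_H$ lies in the $-E_v$ half-strip to reduce to a one-variable statement and Remark \ref{Rk-2}; with your $P'_H$ sitting in $\Sigma_\alpha+\mathbb{Q}_{\geq0}\langle\beta\rangle$ that reduction is unavailable.
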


We refer to $P^{vw}_{h}(\mathbf{t})$ as $h$-equivariant polynomial part of $f_{H}(\mathbf{t})$  as function in $t_{v}$ and $t_{w}$.

\begin{proof}
For uniqueness we show that if $R_{H}(\mathbf{t}) = 0$ then each term on the right hand side of (\ref{Eq-55}) is zero individually. 
We take the common denominator and we introduce the ordering on $\pi_{vw}(L')$ induced by $t_{v}>t_{w}$.
Note that as series in $t_{v},t_{w}$ the leading term of $P^{vw}_{H}(\mathbf{t}) \prod_{i=1}^{m}(1 - h_{i} \mathbf{t}^{\alpha_{i}}) \prod_{j=1}^{n} (1 - g_{j} \mathbf{t}^{\beta_{j}})$ is supported on $\pi_{vw}(L') \setminus \big( \sum_{i=1}^{m} \pi_{vw}(\alpha_{i}) + \sum_{j=1}^{n} \pi_{vw}(\beta_{j}) + \mathbb{Q}_{<0}\langle E_{v},E_{w}\rangle \big)$, which region is disjoint from
  $\pi_{vw}(L') \cap \big(  \sum_{i=1}^{m} [0,1)\pi_{vw}(\alpha_{i}) + \sum_{j=1}^{n} [0,1]\pi_{vw}(\beta_{j})  + Q_{\leq0} \langle E_{v} \rangle \big)  $,  $\pi_{vw}(L') \cap \big(\sum_{i=1}^{m} [0,1]\pi_{vw}(\alpha_{i}) + \sum_{j=1}^{n} [0,1)\pi_{vw}(\beta_{j})  + Q_{\leq0} \langle E_{w} \rangle  \big)$ and $\pi_{vw}(L') \cap \big( \sum_{i=1}^{m} [0,1)\pi_{vw}(\alpha_{i}) + \sum_{j=1}^{n}[0,1)\pi_{vw}(\beta_{j}) \big)$, where  $P'_{H}(\mathbf{t}) \prod_{j=1}^{n}(1 - g_{j}\mathbf{t}^{\beta_{j}})$, $P''_{H}(\mathbf{t}) \prod_{i=1}^{m} (1 - h_{i} \mathbf{t}^{\alpha_{i}})$ and $P'''_{H}(\mathbf{t})$ are supported, respectively. Thus $P^{vw}_{H}(\mathbf{t})$ must vanish. Moreover, the leading term of 
$P'_{H}(\mathbf{t}) \prod_{j=1}^{n}(1 - g_{j}\mathbf{t}^{\beta_{j}})$ (as series in $t_{v}$ and $t_{w}$) is supported in $\pi_{vw}(L') \cap \big( \sum_{j=1}^{n} \pi_{vw}(\beta_{j}) + Q_{\leq0} \langle E_{v} \rangle  + \sum_{i=1}^{m} [0,1)\pi_{vw}(\alpha_{i}) \big) $, which is disjoint from the support of $P''_{H}(\mathbf{t}) \prod_{i=1}^{m} (1 - h_{i} \mathbf{t}^{\alpha_{i}})$ and $P'''_{H}(\mathbf{t})$, hence $P'_{H}(\mathbf{t}) = 0$. By symmetry $P''_{H}(\mathbf{t})=0$ too, which implies $P'''_{H}(\mathbf{t})=0$.

For the existence we proceed as follows. We can write 
$
R_{H}(\mathbf{t}) = \rho_{1}  \prod_{i=1}^{m}(1-h_{i}\mathbf{t}^{\alpha_{i}}) \prod_{j=1}^{n}(1-g_{j}\mathbf{t}^{\beta_{j}}) + \rho_{2}  \prod_{i=1}^{m}(1-h_{i}\mathbf{t}^{\alpha_{i}}) + \rho_{3} \prod_{j=1}^{n}(1-g_{j}\mathbf{t}^{\beta_{j}}) + \rho_{4}
$ 
such that the support of $\rho_{2}$ is in $\pi_{vw}(L') \cap \big( \mathbb{Q}_{\geq0} \langle \alpha \rangle  + \sum_{j=1}^{n} [0,1)\pi_{vw}(\beta_{j}) \big)$, of $\rho_{3}$ is in $\pi_{vw}(L') \cap \big( \mathbb{Q}_{\geq0} \langle \beta \rangle +  \sum_{i=1}^{m} [0,1)\pi_{vw}(\alpha_{i}) \big)$ and  of $\rho_{4}$ is in $\pi_{vw}(L') \cap \big( \sum_{i=1}^{m} [0,1)\pi_{vw}(\alpha_{i}) +  \sum_{j=1}^{n} [0,1)\pi_{vw}(\beta_{j}) \big)$.
Moreover, as series in $t_{v}$ we can write $\rho_{2}= \rho_{2}'\prod_{j=1}(1-g_{j}\mathbf{t}^{\beta_{j}}) + \rho_{2}''$ such that $\rho_{2}''$ has support in $\pi_{vw}(L') \cap \big( \sum_{j=1}^{n} [0,1)\pi_{vw}(\beta_{j}) + \mathbb{Q}_{\leq 0} \langle E_{w} \rangle \big)$. Similarly, considered as series in $t_{w}$ we can write $\rho_{3} = \rho_{3}'  \prod_{i=1}^{m}(1-h_{i}\mathbf{t}^{\alpha_{i}}) + \rho_{3}''$ such that $\rho''_{3}$ has support in $\pi_{vw}(L') \cap \big( \sum_{i=1}^{m} [0,1)\pi_{vw}(\alpha_{i}) + \mathbb{Q}_{\leq0} \langle E_{v} \rangle \big)$. Thus, we get $P^{vw}_{H} = \rho_{1}+\rho_{2} + \rho_{3}$, $P'_{H}=\rho''_{3}$, $P''_{H}=\rho''_{2}$ and $P'''_{H} = \rho_{4}$.

For the last part, note that the periodic constant of the last term on the right hand side of (\ref{Eq-55}) vanishes by Remark \ref{Rk-2}. 
Directly from the definition of the two-variable periodic constant we can see that
$\textnormal{pc}^{\mathfrak{c}}_{h}\left( \textnormal{T} \left[ \frac{P'_{H}(\mathbf{t}_{vw})}{\prod_{i=1}^{m}(1-h_{i}\mathbf{t}_{vw}^{\alpha_{i}})} \right] \right) 
= 
\textnormal{pc}^{\mathfrak{c}}_{h}\left( \textnormal{T} \left[ \frac{P'_{H}(t_{w})}{\prod_{i=1}^{m}(1-h_{i}t_{w}^{\alpha_{i}})} \right] \right)$  
and  
$\textnormal{pc}^{\mathfrak{c}}_{h}\left( \textnormal{T} \left[ \frac{P''_{H}(\mathbf{t}_{vw})}{\prod_{j=1}^{n}(1-g_{j}\mathbf{t}_{vw}^{\beta_{j}})} \right] \right) 
= 
\textnormal{pc}^{\mathfrak{c}}_{h}\left( \textnormal{T} \left[ \frac{P''_{H}(t_{v})}{\prod_{j=1}^{n}(1-g_{j}t_{v}^{\beta_{j}})} \right] \right)$, moreover they vanish by Remark \ref{Rk-2}. 
Thus, we get $\textnormal{pc}^{\mathfrak{c}}_{h}(\textnormal{T}[f_{H}(\mathbf{t}_{vw})]) = \textnormal{pc}^{\mathfrak{c}}_{h}(P^{vw}_{H}(\mathbf{t}_{vw})) = P^{vw}_{h}(\mathbf{1})$.
\end{proof}

\subsection{Polynomial part for more variables}\label{ss:polypart}
We start with the decomposition 
$$
\textnormal{pc}(Z_{h}(\mathbf{t}_{\mathcal{N}})) 
= 
\sum_{\overline{vw}\textnormal{ edge of }\mathcal{T}^{orb}} \textnormal{pc}(Z_{h}(\mathbf{t}_{vw}))
- 
\sum_{v\in \mathcal{N}} (\delta_{v}^{orb}-1)  \textnormal{pc}(Z_{h}(t_{v}))
$$ 
of the periodic constant from Corollary \ref{Cor-1} and we will lift this relation to the level of the rational function. 
The strategy is to construct the corresponding multivariable polynomials which are polynomial parts of $f_h(\mathbf{t}_\mathcal{N})$ considered as function in $t_v$ and $\mathbf{t}_{vw}$.

By Section \ref{ss:pcrevdim1} we can consider decomposition 
$f_{h}(\mathbf{t}_{\mathcal{N}}) =  P^{v}(\mathbf{t}_{\mathcal{N}}) + f_{h}^{v,-}(\mathbf{t}_{\mathcal{N}})$, where $P^{v}_{h}(\mathbf{t}_\mathcal{N})$ is the polynomial part of $f_{h}(\mathbf{t}_{\mathcal{N}})$ as function in $t_{v}$, hence $P^{v}_{h}(\mathbf{1})=\textnormal{pc}(Z_{h}(t_{v}))$.

Similarly, for any edge $\overline{vw}$ of the orbifold graph $\mathcal{T}^{orb}$ we want to consider the decomposition $f_{h}(\mathbf{t}_{\mathcal{N}}) = P^{vw}_{h}(\mathbf{t}_{\mathcal{N}}) + f_{h}^{vw,-}(\mathbf{t}_{\mathcal{N}})$ by Lemma \ref{Lm-9}, where $P^{vw}_{h}(\mathbf{t}_{\mathcal{N}})$ is the polynomial part of $f_{h}(\mathbf{t}_{\mathcal{N}})$  as function in $\mathbf{t}_{vw}$, thus  
$P^{vw}_{h}(\mathbf{1})=\textnormal{pc}(Z_{h}(\mathbf{t}_{vw}))$.
We check the requirements of the lemma as follows. Removing the path connecting $v$ and $w$ from $\mathcal{T}$ we get two subtrees $\mathcal{T}_{v}$ and $\mathcal{T}_{w}$ containing $v$ and $w$, respectively. Then $\pi_{vw}(E^{*}_{u})$ is a positive multiple of $\pi_{vw}(E^{*}_{v})$  (resp. $\pi_{vw}(E^{*}_{w})$) for any vertex $u$ of $\mathcal{T}_{v}$ (resp. $\mathcal{T}_{w}$). 
Indeed, the graph $\mathcal{T}_{v}\setminus v$ decomposes into union of trees and denote $\mathcal{T}'$ one of its component with vertex set $\mathcal{V}'$. There is a unique vertex $v'\in \mathcal{V}'$ such that $\overline{vv'}$ is an edge of $\mathcal{T}$, hence set $E'_{v'}=\pi_{vw}(E^{*}_{v})$ and $E'_{u}=0$ for $u\in \mathcal{V}'\setminus v'$. Then, from the projected relations associated with vertices of $\mathcal{T}'$ follows that $[\pi_{vw}(E^{*}_{u})]_{u\in \mathcal{V}'} = (A_{\mathcal{V}'})^{-1} \cdot [E'_{u}]_{u\in \mathcal{V}'}$, where $A_{\mathcal{V}'}$ is the positive definite matrix associated with the subtree $\mathcal{T}'$. In particular, the inverse of $A_{\mathcal{V}'}$ has positive entries, thus $\pi_{vw}(E^{*}_{u})$ is a positive multiple of $\pi_{vw}(E^{*}_{v})$.

We can write $
f_{h}(\mathbf{t}_{\mathcal{N}}) = \sum_{\overline{vw}\textnormal{ edge of }\mathcal{T}^{orb}} f_{h}(\mathbf{t}_{\mathcal{N}}) - \sum_{v\in \mathcal{N}} (\delta_{v}^{orb}-1)  f_{h}(\mathbf{t}_{\mathcal{N}})$ in order 
to get a decomposition $f_h(\mathbf{t}_{\mathcal{N}})=P_h(\mathbf{t}_{\mathcal{N}})+f^{-}_h(\mathbf{t}_{\mathcal{N}})$ such that
\begin{equation}\label{eq:polypart}
P_{h}(\mathbf{t}_{\mathcal{N}}) = \sum_{\overline{vw}\textnormal{ edge of }\mathcal{T}^{orb}} P_{h}^{vw}(\mathbf{t}_{\mathcal{N}}) - \sum_{v\in \mathcal{N}} (\delta_{v}^{orb}-1) P^{v}_{h}(\mathbf{t}_{\mathcal{N}})
\end{equation}
is Laurent polynomial with $P_h(\mathbf{1})=\textnormal{pc}^{\pi_{\mathcal{N}}(S_{\mathbb{R}}')}(Z_h(\mathbf{t}_{\mathcal{N}}))$, thus the periodic constant of the rational function $f_h^-(\mathbf{t}_{\mathcal{N}}))$ is zero. Moreover, it has the form 
\begin{equation}\label{eq:f-}
f_{h}^{-}(\mathbf{t}_{\mathcal{N}}) = \sum_{\overline{vw}\textnormal{ edge of }\mathcal{T}^{orb}} f_{h}^{vw,-}(\mathbf{t}_{\mathcal{N}}) - \sum_{v\in \mathcal{N}} (\delta_{v}^{orb}-1)  f_{h}^{v,-}(\mathbf{t}_{\mathcal{N}}), 
\end{equation}
which is unique by Lemma \ref{lem:pcdim1} and Lemma \ref{lem:pcdim2}.

We summarize the above results in the following theorem.

\begin{thm}
 For any $h\in H$, there exists a unique decomposition 
 $$f_h(\mathbf{t}_{\mathcal{N}})=P_h(\mathbf{t}_{\mathcal{N}})+f^{-}_h(\mathbf{t}_{\mathcal{N}}),$$
 such that $P_h(\mathbf{t}_{\mathcal{N}})$ is a Laurent polynomial, supported on $\pi_{\mathcal{N}}(L')\setminus \mathbb{Q}_{<0}\langle E_v\rangle_{v\in \mathcal{N}}$, with 
 $P_h(\mathbf{1})=\textnormal{pc}^{\pi_{\mathcal{N}}(S_{\mathbb{R}}')}(Z_h(\mathbf{t}_{\mathcal{N}}))$
 and $f^{-}_h(\mathbf{t}_{\mathcal{N}})$ is rational, satisfying (\ref{eq:f-}).
\end{thm}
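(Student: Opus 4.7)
The blueprint is essentially encoded in formulas (\ref{eq:polypart}) and (\ref{eq:f-}): I would take them as the definitions of $P_h(\mathbf{t}_\mathcal{N})$ and $f_h^-(\mathbf{t}_\mathcal{N})$ and then verify the four claimed properties. To make these formulas meaningful, one first applies Lemma~\ref{lem:pcdim1} at every node $v\in\mathcal{N}$, treating $f_h(\mathbf{t}_\mathcal{N})$ as a one-variable rational function in $t_v$ with coefficients in the ring of Laurent/rational expressions in the remaining variables, and one applies Lemma~\ref{lem:pcdim2} at every orbifold edge $\overline{vw}$ of $\mathcal{T}^{orb}$, treating $f_h(\mathbf{t}_\mathcal{N})$ as a two-variable rational function in $t_v,t_w$. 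The hypotheses of Lemma~\ref{lem:pcdim1} are immediate, since every denominator factor of $f_h(\mathbf{t}_\mathcal{N})$ has the form $1-h_u\mathbf{t}_\mathcal{N}^{\pi_\mathcal{N}(E^*_u)}$ with $u\in\mathcal{E}$ and all coordinates of $\pi_\mathcal{N}(E^*_u)$ non-negative. The two-variable hypothesis needs the proportionality $\pi_{vw}(E^*_u)\in\mathbb{Q}_{>0}\pi_{vw}(E^*_v)$ or $\mathbb{Q}_{>0}\pi_{vw}(E^*_w)$ depending on which side of the chain collapsing to $\overline{vw}$ contains $u$; this follows from Lemma~\ref{Lm-5}(\ref{Lm-5iii}) applied to the subtrees obtained by removing that chain, combined with the positivity of the entries of the inverse of the associated restricted intersection matrix, exactly as argued in the paragraphs preceding the theorem.

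\textbf{The Laurent polynomial property of $P_h$.} I expect this to be the main obstacle. Each $P^v_h$ is a priori only a polynomial in $t_v$ whose coefficients may contain denominators in the other variables, and similarly for $P_h^{vw}$; one must show that the explicit tree-telescoping combination (\ref{eq:polypart}) produces a global Laurent polynomial. The cleanest route I see is to rewrite $P^v_h=f_h-f_h^{v,-}$ and $P_h^{vw}=f_h-f_h^{vw,-}$, and use the tree-identity on $\mathcal{T}^{orb}$, namely $|E(\mathcal{T}^{orb})|-\sum_{v\in\mathcal{N}}(\delta_v^{orb}-1)=1$, to obtain
$$
P_h(\mathbf{t}_\mathcal{N}) \;=\; f_h(\mathbf{t}_\mathcal{N}) \;-\; \Big(\sum_{\overline{vw}\text{ edge of }\mathcal{T}^{orb}} f_h^{vw,-}(\mathbf{t}_\mathcal{N}) \;-\; \sum_{v\in\mathcal{N}}(\delta_v^{orb}-1)f_h^{v,-}(\mathbf{t}_\mathcal{N})\Big).
$$
One then shows that every pole of $f_h$ along a hyperplane $\{1-h_u\mathbf{t}_\mathcal{N}^{\pi_\mathcal{N}(E^*_u)}=0\}$, $u\in\mathcal{E}$, is cancelled by the matching poles appearing inside $f_h^{v,-}$ and $f_h^{vw,-}$: the support restrictions in Lemmas~\ref{lem:pcdim1}(ii) and~\ref{lem:pcdim2}(ii)--(iv) control which end-denominators survive in each remainder, and the counting over edges through the node(s) closest to $u$ in $\mathcal{T}^{orb}$ matches so that no pole remains.

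\textbf{Support, evaluation, and uniqueness.} The inclusion of the support of $P_h$ in $\pi_\mathcal{N}(L')\setminus\mathbb{Q}_{<0}\langle E_v\rangle_{v\in\mathcal{N}}$ follows from Lemmas~\ref{lem:pcdim1}(i) and~\ref{lem:pcdim2}(i): every monomial contributed by a $P^v_h$ or a $P_h^{vw}$ is non-negative in at least one coordinate of $\pi_\mathcal{N}(L')$, so the same holds for their alternating sum. For the value at $\mathbf{1}$, the same two lemmas give $P^v_h(\mathbf{1})=\textnormal{pc}(Z_h(t_v))$ and $P_h^{vw}(\mathbf{1})=\textnormal{pc}(Z_h(\mathbf{t}_{vw}))$, hence substituting $\mathbf{t}_\mathcal{N}=\mathbf{1}$ in (\ref{eq:polypart}) and invoking Corollary~\ref{Cor-1} yields $P_h(\mathbf{1})=\textnormal{pc}^{\pi_\mathcal{N}(\mathcal{S}'_\mathbb{R})}(Z_h(\mathbf{t}_\mathcal{N}))$. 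Finally, for uniqueness, given a second decomposition $f_h=\widetilde{P}_h+\widetilde{f_h^-}$ with $\widetilde{f_h^-}$ of the form (\ref{eq:f-}) assembled from pieces $\widetilde{f_h^{v,-}}, \widetilde{f_h^{vw,-}}$, one reads off the one- and two-variable polynomial parts of $\widetilde{P}_h$ by applying the uniqueness clauses of Lemma~\ref{lem:pcdim1} and Lemma~\ref{lem:pcdim2} at each node and each orbifold edge separately; these coincide with $P_h^v$ and $P_h^{vw}$, and substituting back into (\ref{eq:polypart})--(\ref{eq:f-}) forces $\widetilde{P}_h=P_h$ and $\widetilde{f_h^-}=f_h^-$.
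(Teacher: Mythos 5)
Your proposal follows the paper's own construction: take (\ref{eq:polypart}) and (\ref{eq:f-}) as the definitions of $P_h$ and $f_h^-$, verify the hypotheses of Lemma~\ref{lem:pcdim1} at each node and of Lemma~\ref{lem:pcdim2} at each orbifold edge (the latter via the positivity of the entries of the inverse of the restricted intersection matrix, exactly as in the paper), obtain the value at $\mathbf{1}$ from Corollary~\ref{Cor-1}, the support from the support clauses (i) of the two lemmas, and uniqueness from their uniqueness clauses. The only point where you genuinely diverge is the Laurent polynomiality of $P_h$, which you call the main obstacle and propose to handle through the tree identity $|E(\mathcal{T}^{orb})|-\sum_{v\in\mathcal{N}}(\delta_v^{orb}-1)=1$ and a pole-cancellation count against $f_h^-$. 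This detour rests on a misreading of Lemmas~\ref{lem:pcdim1} and~\ref{lem:pcdim2}: their decompositions are produced by division with remainder in $t_v$ (resp.\ $t_v,t_w$) over the ring of Laurent polynomials in the remaining variables with $\mathbb{Z}[H]$-coefficients, and since each divisor $\prod_i(1-h_i\mathbf{t}^{\alpha_i})$ has extremal $t_v$-coefficient equal to a group element times a monomial (hence a unit of that ring), the quotients $P^v_h$ and $P^{vw}_h$ are already honest Laurent polynomials in \emph{all} the node variables --- as the explicit monomial contributions in the example of Section~\ref{sec:example} illustrate. Consequently $P_h$ is a $\mathbb{Z}$-linear combination of Laurent polynomials and nothing needs to be cancelled. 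Your alternative route would ultimately succeed (the cancellation is forced once the individual polynomiality is known), but as written it is the least substantiated step of your argument, relying on an unproven matching of end-denominators; replacing it by the direct observation above aligns you with the paper and closes the only soft spot in an otherwise correct proof.
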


\subsection{Example}\label{sec:example}
We consider the following graph $\mathcal{T}$:
\begin{center}
\begin{tikzpicture}[scale=.5]
\coordinate (v11) at (0,0);
\draw  node[above] at (v11) {$-2$};
\draw[fill] (v11) circle (0.1);

\coordinate (v1) at (2,0);
\draw node[above] at (v1) {$-1$};
\draw[fill] (v1) circle (0.1);

\coordinate (v12) at (2,-2);
\draw   node[below] at (v12) {$-3$};
\draw[fill] (v12) circle (0.1);

\coordinate (u1) at (4,0);
\draw   node[above] at (u1) {$-9$};
\draw[fill] (u1) circle (0.1);

\coordinate (v0) at (6,0);
\draw   node[above] at (v0) {$-1$};
\draw[fill] (v0) circle (0.1);

\coordinate (v01) at (6,-2);
\draw   node[below] at (v01) {$-2$};
\draw[fill] (v01) circle (0.1);

\coordinate (u2) at (8,0);
\draw   node[above] at (u2) {$-13$};
\draw[fill] (u2) circle (0.1);

\coordinate (v2) at (10,0);
\draw   node[above] at (v2) {$-1$};
\draw[fill] (v2) circle (0.1);

\coordinate (v22) at (10,-2);
\draw   node[below] at (v22) {$-3$};
\draw[fill] (v22) circle (0.1);

\coordinate (v21) at (12,0);
\draw   node[above] at (v21) {$-2$};
\draw[fill] (v21) circle (0.1);

\draw[-] (v11) -- (v1);
\draw[-] (v12) -- (v1);
\draw[-] (u1) -- (v1);
\draw[-] (u1) -- (v0);
\draw[-] (v01) -- (v0);
\draw[-] (u2) -- (v0);
\draw[-] (u2) -- (v2);
\draw[-] (v21) -- (v2);
\draw[-] (v22) -- (v2);

\draw node[below left] at (v1) {$E_{1}$};
\draw node[below left] at (v0) {$E_{0}$};
\draw node[below left] at (v2) {$E_{2}$};
\end{tikzpicture}
\end{center}
In this case the group $H$ is trivial, hence 
\begin{gather*}
f_{H}(\mathbf{t}_{\mathcal{N}}) 
=
\frac{
( 1 - t_{0}^{84} t_{1}^{186} t_{2}^{72} ) 
( 1 - t_{0}^{42} t_{1}^{84} t_{2}^{36} )
( 1 - t_{0}^{36} t_{1}^{72} t_{2}^{42} )
}
{
( 1 - t_{0}^{42} t_{1}^{93} t_{2}^{36} )
( 1 - t_{0}^{28} t_{1}^{62} t_{2}^{24} )
( 1 - t_{0}^{21} t_{1}^{42} t_{2}^{18} )
( 1 - t_{0}^{18} t_{1}^{36} t_{2}^{21} )
( 1 - t_{0}^{12} t_{1}^{24} t_{2}^{14} )
}
\\
=
\frac{
( 1 + t_{0}^{42} t_{1}^{93} t_{2}^{36} )
( 1 + t_{0}^{21} t_{1}^{42} t_{2}^{18} )
( 1 + t_{0}^{18} t_{1}^{36} t_{2}^{21} )
}
{
( 1 - t_{0}^{28} t_{1}^{62} t_{2}^{24} )
( 1 - t_{0}^{12} t_{1}^{24} t_{2}^{14} )
}
\\
=
\frac{
1 +  t_{0}^{42} t_{1}^{93} t_{2}^{36} + t_{0}^{21} t_{1}^{42} t_{2}^{18}  +  t_{0}^{18} t_{1}^{36} t_{2}^{21} 
+ 
t_{0}^{63} t_{1}^{135} t_{2}^{54} + t_{0}^{60} t_{1}^{129} t_{2}^{57}  + t_{0}^{39} t_{1}^{78} t_{2}^{39}  
+
t_{0}^{81} t_{1}^{171} t_{2}^{75}
}
{
( 1 - t_{0}^{28} t_{1}^{62} t_{2}^{24} )
( 1 - t_{0}^{12} t_{1}^{24} t_{2}^{14} )
}.
\end{gather*}
The orbifold graph $\mathcal{T}^{orb}$ has three vertices $E^{*}_{1}$, $E^{*}_{0}$, $E^{*}_{2}$ and two edges connecting $E^{*}_{0}$ to $E^{*}_{1}$ and $E^{*}_{2}$, respectively.
According to formula (\ref{eq:polypart}) the polynomial part of $f_{H}(\mathbf{t}_{\mathcal{N}})$ has of form $P(\mathbf{t}_{\mathcal{N}}) = P^{01}(\mathbf{t}_{\mathcal{N}}) + P^{02}(\mathbf{t}_{\mathcal{N}}) - P^{0}(\mathbf{t}_{\mathcal{N}})$. 
For simplicity, we only discuss in more details the decomposition of the summand $\frac{t_{0}^{63} t_{1}^{135} t_{2}^{54}}{( 1 - t_{0}^{28} t_{1}^{62} t_{2}^{24} )( 1 - t_{0}^{12} t_{1}^{24} t_{2}^{14} )}$ and its contributions to $P^{01}$, $P^{02}$ and $P^{0}$.
\begin{enumerate}
\item 
Considering as function in variables $t_{0}$ and $t_{1}$ its decomposition by (\ref{Eq-55}) yields
$$
t_{0}^{23} t_{1}^{49} t_{2}^{16} + t_{0}^{11} t_{1}^{25} t_{2}^{2} + t_{0}^{-1} t_{1} t_{2}^{-12} 
- \frac{ t_{0}^{-1} t_{1} t_{2}^{-12}  } {   1 - t_{0}^{12} t_{1}^{24} t_{2}^{14}  } - \frac{ t_{0}^{23} t_{1}^{49} t_{2}^{16} }{  1 - t_{0}^{28} t_{1}^{62} t_{2}^{24} }
+
\frac{ t_{0}^{23} t_{1}^{49} t_{2}^{16} } {( 1 - t_{0}^{28} t_{1}^{62} t_{2}^{24} ) ( 1 - t_{0}^{12} t_{1}^{24} t_{2}^{14} )}, 
$$
hence it contributes $t_{0}^{23} t_{1}^{49} t_{2}^{16} + t_{0}^{11} t_{1}^{25} t_{2}^{2} + t_{0}^{-1} t_{1} t_{2}^{-12} $ to $P^{01}$.

\item  
Considering as function in variables $t_{0}$ and $t_{2}$ its decomposition by (\ref{Eq-55}) gives
$$
t_{0}^{23} t_{1}^{49} t_{2}^{16} + t_{0}^{11} t_{1}^{25} t_{2}^{2}
- \frac{ t_{0}^{11} t_{1}^{25} t_{2}^{2}  +  t_{0}^{7} t_{1}^{11} t_{2}^{6} } {   1 - t_{0}^{12} t_{1}^{24} t_{2}^{14}  } +
\frac{ t_{0}^{7} t_{1}^{11} t_{2}^{6} } {( 1 - t_{0}^{28} t_{1}^{62} t_{2}^{24} ) ( 1 - t_{0}^{12} t_{1}^{24} t_{2}^{14} )}, 
$$
hence it contributes $t_{0}^{23} t_{1}^{49} t_{2}^{16} + t_{0}^{11} t_{1}^{25} t_{2}^{2}$ to $P^{02}$.

\item 
Considering as one-variable function in $t_{0}$ its decomposition according to (\ref{Eq-56}) equals
$$
t_{0}^{23} t_{1}^{49} t_{2}^{16} + t_{0}^{11} t_{1}^{25} t_{2}^{2} +
\frac{ 
 t_{0}^{35} t_{1}^{73} t_{2}^{30} + t_{0}^{39} t_{1}^{87} t_{2}^{26} + t_{0}^{23} t_{1}^{49} t_{2}^{16} - t_{0}^{11} t_{1}^{25} t_{2}^{2} - t_{0}^{23} t_{1}^{49} t_{2}^{16}
}
{( 1 - t_{0}^{28} t_{1}^{62} t_{2}^{24} ) ( 1 - t_{0}^{12} t_{1}^{24} t_{2}^{14} ) 
},
$$
thus it contributes $t_{0}^{23} t_{1}^{49} t_{2}^{16} + t_{0}^{11} t_{1}^{25} t_{2}^{2}$ to $P^{0}$.
\end{enumerate}
By similar computations for each summand of $f_{H}$  we get that
\begin{align*}
P(t_{0},t_{1},t_{2}) 
={}& 
t_{0}^{41} t_{1}^{85} t_{2}^{37} +
t_{0}^{29} t_{1}^{61} t_{2}^{23} +
t_{0}^{23} t_{1}^{49} t_{2}^{16} +
t_{0}^{20} t_{1}^{43} t_{2}^{19} +
t_{0}^{17} t_{1}^{37} t_{2}^{9} +
t_{0}^{13} t_{1}^{23} t_{2}^{13} +
t_{0}^{11} t_{1}^{25} t_{2}^{2} 
\\
& + 
t_{0}^{8} t_{1}^{19} t_{2}^{5} +
t_{0}^{5} t_{1}^{13} t_{2}^{-5} + 
t_{0}^{2} t_{1}^{7} t_{2}^{-2} +
t_{0}^{} t_{1}^{-1} t_{2}^{-1} +
t_{0}^{-1} t_{1}^{} t_{2}^{-12} +
t_{0}^{-1} t_{1}^{-8} t_{2}^{},
\end{align*}
in particular the normalized Seiberg--Witten invariant of the associated manifold is equal with $\textnormal{pc}(
\textnormal{T}[f_{H}(\mathbf{t})]) = \textnormal{pc}(
\textnormal{T}[f_{H}(\mathbf{t}_{\mathcal{N}})]) = P(\mathbf{1}) = 13$.

\section{Surgery (recursion) formulas}\label{sec:surgform}

In this section we present a recursion formula for the counting function  (Theorem \ref{Thm-3}) using the interpretation of Section \ref{sec:coeff}. It is 
given in terms of a surgery on the graph $\mathcal{T}$. 
In particular, we discuss the recurrence on the level of periodic constants too, and compare 
with a surgery formula of Braun and N\'emethi \cite{BN} proved for the Seiberg--Witten invariants.
For sake of completeness, we recall first the formula from \cite{BN}.

\subsection{The Braun--N\'emethi surgery formula}

Although the formula is true for any vertex of $\mathcal{T}$, we restrict our attention to an end--vertex $u\in \mathcal{E}$. Denote 
the plumbed $4$-- and $3$--manifold associated with $\mathcal{T}\setminus u$ by $\widetilde X_u$ and $M_u$ respectively.

For any $h\in H$, the $spin^c$--structure $\sigma=h*\sigma_{can}\in \mathrm{Spin}^c(M)$ can be extended uniquely to $\widetilde\sigma\in \mathrm{Spin}^c(\widetilde X)$ 
such that 
$\widetilde\sigma=r_h*\widetilde\sigma_{can}$. 
We consider the projection 
$\pi^{(u)}:L'\to L'_{\mathcal{T}\setminus u}$, where $\pi^{(u)}(E^*_v)=E^*_v$ if  $v\in \mathcal{V}\setminus u$ and 
$\pi^{(u)}(E^*_u)=0$. 
Since the canonical $spin^c$--structure of $\widetilde X$ projects to the canonical $spin^c$--structure $\widetilde\sigma_{can,u}$ 
of $\widetilde X_u$, $\widetilde\sigma$ 
projects to $\widetilde\sigma_u:=\pi^{(u)}(r_h)*\widetilde\sigma_{can,u}$, whose restriction to the boundary $M_u$ is 
$\sigma_u=[\pi^{(u)}(r_h)]*\sigma_{can,u}$.
Then the main result of \cite{BN} is the following theorem.
\begin{thm}{(Braun--N\'emethi surgery formula)}\label{thm:BN}
\begin{align*}
 \mathfrak{sw}_{-h*\sigma_{can}}(M)+\frac{(K_{\mathcal{T}}+2r_h)^2+|\mathcal{V}|}{8}
 ={}&
 \mathfrak{sw}_{-[\pi^{(u)}(r_h)]*\sigma_{can,u}}(M_u)+\frac{(K_{\mathcal{T}\setminus u} + 2\pi^{(u)}(r_h))^2+
 |\mathcal{V}\setminus u|}{8}\\
 &- \mathrm{pc}(Z_{h}^u), 
\end{align*}
where $Z_{h}^u$ is the one-variable series $Z_h|_{t_v=1,v\neq u}$.
\end{thm}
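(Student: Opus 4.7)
The plan is to translate the surgery formula, via Theorem \ref{th:JEMS}, into an identity between periodic constants, and prove the latter through the counting-function framework developed in Sections \ref{sec:resunique}--\ref{s:strcf}.

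Applying Theorem \ref{th:JEMS} both to $M$ and, with the class $[\pi^{(u)}(r_h)]$, to $M_u$ — using the compatibility $\pi^{(u)}(K_{\mathcal{T}})=K_{\mathcal{T}\setminus u}$, which follows from (\ref{adjunction-formulae}), so that the quadratic normalizations on the two sides match — the claim is equivalent to
\begin{equation*}
\mathrm{pc}(Z_h^{\mathcal{T}}) \;=\; \mathrm{pc}\bigl(Z^{\mathcal{T}\setminus u}_{[\pi^{(u)}(r_h)]}\bigr) \;+\; \mathrm{pc}(Z_h^u).
\end{equation*}
I would derive this identity by first lifting it to the counting function. Starting from the alternating sum (\ref{eq:q1}) for $Q_h^{\mathcal{T}}(x)$, split the sum according to whether $u\in\mathcal{I}$ or not. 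For $\mathcal{I}\not\ni u$, the Projection Lemma \ref{Lm-Pr} applied in the direction $E_u^*$ — using the relation $A_{uu}E_u^*=E_u+E_{u'}^*$ from (\ref{eq:rel}), valid because $u$ is an end-vertex of $\mathcal{T}$ with unique neighbour $u'$ — identifies each $\mathcal{I}$-coefficient function on $\mathcal{T}$ with the corresponding one on $\mathcal{T}\setminus u$ evaluated at $\pi^{(u)}(x)$; the required chamber hypothesis is supplied by Lemma \ref{Lm-6}, which guarantees that $\pi_{\mathcal{I}}(E_u^*)$ lies on an extreme ray shared by every basis whose positive cone contains the projected Lipman cone. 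Summed with signs this piece collects into $Q^{\mathcal{T}\setminus u}_{[\pi^{(u)}(r_h)]}(\pi^{(u)}(x))$. For $\mathcal{I}\ni u$, grouping by minimal subtree and invoking Lemma \ref{Lm-8} together with the cancellation used in the proof of Theorem \ref{Thm-2} — valid because the end-vertex $u$ automatically lies in $\mathcal{E}_{\mathcal{J}}$ for every subtree $\mathcal{J}\ni u$ — leaves only the singleton $\{u\}$ and the edge-subtree $\{u,u'\}$. Rewriting the resulting coefficient functions via the same identifications that produce Corollary \ref{cor:structure} (one-variable coefficient $= Q_h^v$, two-variable coefficient $= Q_h^v+Q_h^w-Q_h^{vw}$) yields the counting-function recursion
\begin{equation*}
Q_h^{\mathcal{T}}(x) \;=\; Q^{\mathcal{T}\setminus u}_{[\pi^{(u)}(r_h)]}\bigl(\pi^{(u)}(x)\bigr) \;+\; Q_h^{uu'}(x) \;-\; Q_h^{u'}(x).
\end{equation*}

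Passing to the quadratic quasipolynomial guaranteed by Theorem \ref{Thm-A} and taking constant terms, the identity reduces to $\mathrm{pc}(Z_h(\mathbf{t}_{uu'})) - \mathrm{pc}(Z_h(t_{u'})) = \mathrm{pc}(Z_h^u)$, i.e. the edge-to-vertex decomposition $\mathrm{pc}(Z_h(\mathbf{t}_{uu'})) = \mathrm{pc}(Z_h(t_u))+\mathrm{pc}(Z_h(t_{u'}))$. This in turn is obtained by applying the two-variable polynomial-part decomposition of Lemma \ref{lem:pcdim2} to $f_h(\mathbf{t}_{uu'})$ and then invoking the Projection Lemma in the direction of the end-vertex $u$ of the subgraph $\{u,u'\}$, comparing with the one-variable decomposition of Lemma \ref{lem:pcdim1}. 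Feeding the resulting periodic-constant identity back through Theorem \ref{th:JEMS} produces the Braun--N\'emethi surgery formula.

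The main technical obstacle is verifying the chamber hypothesis of the Projection Lemma \ref{Lm-Pr} for every $\mathcal{I}\subseteq\mathcal{V}\setminus\{u\}$ simultaneously — ensuring that in each projected configuration $\Psi_{\mathcal{I}}$ the vector $\pi_{\mathcal{I}}(E_u^*)$ belongs to every basis whose positive cone contains the relevant big chamber — together with the final two-variable-to-one-variable reduction, where one must check that the polynomial-part constructions of Section \ref{s:polpartsec} are compatible with the removal of an end-vertex. Both hinge on the combinatorics of $u$ being an end-vertex of $\mathcal{T}$.
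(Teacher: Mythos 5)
Your overall strategy (reduce via Theorem \ref{th:JEMS}, establish a counting-function recursion by splitting the alternating sum at the end-vertex $u$, then pass to constant terms) is the right one and is what Section \ref{sec:surgform} does, but two of your key steps are wrong. First, the reduction: the Braun--N\'emethi right-hand side carries $\pi^{(u)}(r_h)$ inside the quadratic term, and via Theorem \ref{th:JEMS} this corresponds to the quasipolynomial $\mathcal{L}^{\mathcal{T}\setminus u}_{[\pi^{(u)}(r_h)]}$ evaluated at the point $\pi^{(u)}(r_h)$ --- \emph{not} to $\mathrm{pc}\bigl(Z^{\mathcal{T}\setminus u}_{[\pi^{(u)}(r_h)]}\bigr)=\mathcal{L}^{\mathcal{T}\setminus u}_{\widetilde h}(0)=\mathcal{L}^{\mathcal{T}\setminus u}_{\widetilde h}(r_{\widetilde h})$. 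Since $\pi^{(u)}(r_h)$ generally does not lie in $\sum_{v}[0,1)E_v$, these two values differ by $\tfrac{1}{8}\bigl((K+2r_{\widetilde h})^2-(K+2\pi^{(u)}(r_h))^2\bigr)$, so your ``equivalent'' identity $\mathrm{pc}(Z_h^{\mathcal{T}})=\mathrm{pc}(Z^{\mathcal{T}\setminus u}_{\widetilde h})+\mathrm{pc}(Z_h^u)$ is a strictly stronger, and in general false, statement; Section \ref{ss:pcrec} points out exactly this, and notes that a pure periodic-constant recursion only holds in special cases (e.g.\ $h=0$ or $r_h=s_h$). The correct target is $\mathrm{pc}(Z_h^{\mathcal{T}})=\mathcal{L}^{\mathcal{T}\setminus u}_{[\pi^{(u)}(r_h)]}(\pi^{(u)}(r_h))+\mathrm{pc}(Z_h^u)$.

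Second, the counting-function recursion itself. The terms with $\mathcal{I}\not\ni u$ involve $f^{\mathcal{T}}_H(\mathbf{t}_{\mathcal{I}})$, whose denominator still contains the factor $1-h_u\mathbf{t}_{\mathcal{I}}^{\pi_{\mathcal{I}}(E_u^*)}$ and whose $u'$-factor has exponent $\delta_{u'}-2$ rather than $\delta_{u',\mathcal{V}\setminus u}-2$; you cannot apply Lemma \ref{Lm-Pr} along $E_u^*$ there (the factor carries the group element $h_u$ and $E_u^*$ need not be part of a $\mathbb{Z}$-basis of $L'$), and even formally the projected function is not $f^{\mathcal{T}\setminus u}_{\widetilde H}$. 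The paper's mechanism is different: one partial-fractions the $\mathcal{I}\ni u$ terms using $E_{u'}^*=A_{uu}E_u^*-E_u$, projects along $E_u$ (Lemma \ref{Lm-Pr} with Lemma \ref{Lm-7}) so that one piece cancels the $\mathcal{I}\not\ni u$ terms and another vanishes, and only then projects the surviving piece $f'_H/(1-h_u\mathbf{t}^{E_u^*})$ along $E_u^*$ via the intermediate group $G$ to assemble $C^{\mathcal{T}\setminus u}$. The outcome is $\mathcal{L}^{\mathcal{T}}_{[x]}(x)=\mathcal{L}^{\mathcal{T}\setminus u}_{[\pi^{(u)}(x)]}(\pi^{(u)}(x))+\mathcal{L}^u_{[x]}(x_u)$ (Theorem \ref{Thm-3}), whereas your recursion $Q^{\mathcal{T}}_h=Q^{\mathcal{T}\setminus u}_{\widetilde h}\circ\pi^{(u)}+Q^{uu'}_h-Q^{u'}_h$ differs from it by the two-variable coefficient function $\Coeff\bigl(\textnormal{T}[\mathbf{t}_{uu'}^{-r}f_h(\mathbf{t}_{uu'})\tfrac{t_ut_{u'}}{(1-t_u)(1-t_{u'})}],\mathbf{t}_{uu'}^{x}\bigr)$, which is not zero. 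Your final reduction $\mathrm{pc}(Z_h(\mathbf{t}_{uu'}))=\mathrm{pc}(Z_h(t_u))+\mathrm{pc}(Z_h(t_{u'}))$ is an artifact of this error and is inconsistent with the genuine role the two-variable periodic constants play in Corollary \ref{cor:structure}.
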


\subsection{Recursion for the counting function and its quasipolynomial}
\label{ss:recurquasi}

To obtain a recursion for the quasipolynomial and the periodic constant we use partial fraction decomposition and projections.
Fix an end-vertex $u\in \mathcal{E}$ and denote $u'\in \mathcal{V}$ its unique neighboring vertex. 
Let $\mathcal{V}'=\mathcal{V}\setminus u$ and  $\mathcal{E}' = u' \cup \mathcal{E} \setminus u$ the vertex and end-vertex set of $\mathcal{T}\setminus u$.
We group non-empty subsets of $\mathcal{V}$ as follows: 
\begin{enumerate}
\item subsets $\mathcal{I}$ such that $u\in \mathcal{I}$ and $\mathcal{I}\setminus u\neq \emptyset$, 
\item subsets $\mathcal{I}'$ such that $u\notin \mathcal{I}'$, 
\item the subset $\{u\}$.
\end{enumerate}
We use notation $\mathcal{I}'=\mathcal{I}\setminus u$ for any subset $\mathcal{I}$ belonging to the first group, and write $C_{H}^{\mathcal{T}}$ and $f_{H}^{\mathcal{T}}$ emphasizing their dependence on $\mathcal{T}$.
We decompose the function $C^{\mathcal{T}}_{H}$ given in (\ref{Eq-40})  according to the above grouping of subsets
\begin{gather}
\label{Eq-20}
C_{H}^{\mathcal{T}}(x) 
=
\sum_{\substack{u \in \mathcal{I} \subseteq \mathcal{V} \\ |\mathcal{I}|>1}} (-1)^{|\mathcal{I}|-1} 
\Coeff \left(  \textnormal{T}\left[ f^{\mathcal{T}}_{H}(\mathbf{t}_{\mathcal{I}}) \prod_{i\in \mathcal{I}}\frac{\mathbf{t}_{\mathcal{I}}^{E_{i}}}{1-\mathbf{t}_{\mathcal{I}}^{E_{i}}}\right], \mathbf{t}_{\mathcal{I}}^{x} \right)
+
\\
\sum_{\mathcal{I}' \subseteq \mathcal{V}'} (-1)^{|\mathcal{I}'|-1} 
\Coeff \left(  \textnormal{T}\left[ f^{\mathcal{T}}_{H}(\mathbf{t}_{\mathcal{I}'}) \prod_{i\in \mathcal{I}'}\frac{\mathbf{t}_{\mathcal{I'}}^{E_{i}}}{1-\mathbf{t}_{\mathcal{I}'}^{E_{i}}}\right], \mathbf{t}_{\mathcal{I}'}^{x} \right)
+\Coeff \left(  \textnormal{T}\left[ f^{\mathcal{T}}_{H}(t_{u}) \frac{t_{u}}{1-t_{u}}\right], t_{u}^{x} \right).
\notag
\end{gather}
In general, for $z=x+y\in L'$ and $h_{z}=h_{x}h_{y} \in H$ we have the following partial fraction decomposition
$$
\frac{1}{(1 - h_{x}\mathbf{t}^{x})(1 - h_{y}\mathbf{t}^{y})} 
=
\frac{1}{(1 - h_{y}\mathbf{t}^{y})(1 - h_{z}\mathbf{t}^{z})} + \frac{1}{(1 - h_{x}\mathbf{t}^{x})(1 - h_{z}\mathbf{t}^{z})}  -\frac{1}{(1 - h_{z}\mathbf{t}^{z})}. 
$$
In particular, for $x=A_{uu}\,\pi_{\mathcal{I}} (E^{*}_{u})$, $y=-E_{u}$, $z = \pi_{\mathcal{I}} (E^{*}_{u'})$ 
it yields
\begin{gather*}
\frac{ \mathbf{t}_{\mathcal{I}}^{E_{u}} }{ \big(1 - \mathbf{t}_{\mathcal{I}}^{E_{u}} \big) \big(1 - h_{u}\mathbf{t}_{\mathcal{I}}^{E^{*}_{u}} \big) }
=
 -\frac{  
 \sum_{k=0}^{A_{uu}-1} h_{u}^{k} \mathbf{t}_{\mathcal{I}}^{kE_{u}^{*}}
}{ \big( 1 - \mathbf{t}_{\mathcal{I}}^{-E_{u}} \big) \big( 1 - h_{u}^{A_{uu}}\mathbf{t}_{\mathcal{I}}^{A_{uu} E^{*}_{u}} \big) }
\\
=
\frac{  
\mathbf{t}_{\mathcal{I}}^{E_{u}}
\sum_{k=0}^{A_{uu}-1} h_{u}^{k} \mathbf{t}_{\mathcal{I}}^{k E^{*}_{u}}
 }{ \big(1 - \mathbf{t}_{\mathcal{I}}^{E_{u}} \big) \big(1 - h_{u'}\mathbf{t}_{\mathcal{I}}^{E^{*}_{u'}} \big) }
-
\frac{  1 }{ \big( 1 - h_{u}\mathbf{t}_{\mathcal{I}}^{E^{*}_{u}} \big) \big( 1 - h_{u'}\mathbf{t}_{\mathcal{I}}^{E^{*}_{u'}} \big) }
+
\frac{  
\sum_{k=0}^{A_{uu}-1} h_{u}^{k}\mathbf{t}_{\mathcal{I}}^{k E^{*}_{u}}
}{ 1 - h_{u'}\mathbf{t}_{\mathcal{I}}^{E^{*}_{u'}} }.
\end{gather*} 
If we introduce notation $ \varphi_{\mathcal{I}} =  \prod_{v\in \mathcal{V}'} \big( 1 - h_{v} \mathbf{t}_{\mathcal{I}}^{E^{*}_{v}} \big)^{\delta_{v}-2} 
\prod_{i \in \mathcal{I}'} \frac{ \mathbf{t}_{\mathcal{I}}^{E_{i}}  }{ 1 - \mathbf{t}_{\mathcal{I}}^{E_{i}} }$ then the above 
relation gives
\begin{gather}
\label{Eq-34}
\Coeff \left(  \textnormal{T}\left[ f^{\mathcal{T}}_{H}(\mathbf{t}_{\mathcal{I}}) \prod_{i\in \mathcal{I}}\frac{\mathbf{t}_{\mathcal{I}}^{E_{i}}}{1-\mathbf{t}_{\mathcal{I}}^{E_{i}}}\right], \mathbf{t}_{\mathcal{I}}^{x} \right)
=
\Coeff \Bigg(  \textnormal{T} \Bigg[ 
\frac{  \mathbf{t}_{\mathcal{I}}^{E_{u}} \sum_{k=0}^{A_{uu}-1} h_{u}^{k}\mathbf{t}_{\mathcal{I}}^{kE^{*}_{u}}  }
{ \big( 1 - \mathbf{t}_{\mathcal{I}}^{E_{u}} \big) \big( 1 - h_{u'}\mathbf{t}_{\mathcal{I}}^{E^{*}_{u'}} \big) } 
\varphi_{\mathcal{I}} \Bigg],   
\mathbf{t}_{\mathcal{I}}^{x}
\Bigg)
\\
\notag
-
\Coeff\left( 
\textnormal{T} \left[
\frac{  \varphi_{\mathcal{I}} }{ \big( 1 - h_{u}\mathbf{t}_{\mathcal{I}}^{E^{*}_{u}} \big) \big( 1 - h_{u'}\mathbf{t}_{\mathcal{I}}^{E^{*}_{u'}} 
\big) } \right],\, 
\mathbf{t}_{\mathcal{I}}^{x} \right)
+
\Coeff\left( 
\textnormal{T} \left[ \frac{  \sum_{k=0}^{A_{uu}-1} h_{u}^{k}\mathbf{t}_{\mathcal{I}}^{kE^{*}_{u}} }{ 1 -
h_{u'}\mathbf{t}_{\mathcal{I}}^{E^{*}_{u'}} } \varphi_{\mathcal{I}} \right],\, 
\mathbf{t}_{\mathcal{I}}^{x} 
\right).
\notag
	\end{gather}

Let $\Psi'_{\mathcal{I}} =\{\pi_{\mathcal{I}} (E_{j}^{*}),\, E_{i} : j\in \mathcal{E}',\, i\in \mathcal{I}\setminus u\}$. We will apply Lemma \ref{Lm-Pr} to the right hand side of (\ref{Eq-34}) on a suitable affine subcone of the open polyhedral cone $\mathfrak{c}$ given by the following lemma.


\begin{lemma}\label{Lm-7}
There is a maximal dimensional open polyhedral cone $\mathfrak{c}$ in the Lipman cone $\mathcal{S}'_{\mathbb{R}}$ such that $\pi_{\mathcal{I}}(\mathfrak{c}) \cap \mathbb{R}_{\geq0}\langle\Psi'_{\mathcal{I}}\rangle = \emptyset$ for all $\mathcal{I}\subseteq \mathcal{V}$ with $u\in \mathcal{I}$.
\end{lemma}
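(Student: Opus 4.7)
The plan is to take $\mathfrak{c}$ to be a small open polyhedral subcone of $\mathrm{int}\,\mathcal{S}'_{\mathbb{R}}$ concentrated near the ray $\mathbb{R}_{>0}E^*_u$. Since there are only finitely many subsets $\mathcal{I}\subseteq\mathcal{V}$ with $u\in\mathcal{I}$, and each cone $\mathbb{R}_{\geq 0}\langle\Psi'_\mathcal{I}\rangle$ is closed in $V_\mathcal{I}$, by continuity of $\pi_\mathcal{I}$ it is enough to produce, for each such $\mathcal{I}$ with $|\mathcal{I}|\geq 2$ (the degenerate subset $\mathcal{I}=\{u\}$ appears as the isolated third summand of (\ref{Eq-20}) and is not within the scope of this lemma, as in that case $\pi_{\{u\}}(\mathcal{S}'_{\mathbb{R}})$ visibly coincides with $\mathbb{R}_{\geq 0}\langle\Psi'_{\{u\}}\rangle$), a linear functional $\phi_\mathcal{I}$ on $V_\mathcal{I}$ that is nonnegative on every element of $\Psi'_\mathcal{I}$ and strictly negative on $\pi_\mathcal{I}(E^*_u)$. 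Intersecting the finitely many open half-spaces $\{x\in V:\phi_\mathcal{I}(\pi_\mathcal{I}(x))<0\}$ with $\mathrm{int}\,\mathcal{S}'_{\mathbb{R}}$ will then carve out an open neighbourhood of $\mathbb{R}_{>0}E^*_u$ inside the Lipman cone in which one can inscribe the required maximal-dimensional open polyhedral cone $\mathfrak{c}$.

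To construct $\phi_\mathcal{I}$ I would split on whether the unique neighbour $u'$ of the end-vertex $u$ lies in $\mathcal{I}$. When $u'\in\mathcal{I}$, the natural choice $\phi_\mathcal{I}(y):=(y,E_u)$, with $y\in V_\mathcal{I}$ extended by zero to $V$, does the job: using $(E^*_j,E_u)=-\delta_{ju}$ together with $u\notin\mathcal{E}'$ and the cancellation $A_{uu}(A^{-1})_{ju}=(A^{-1})_{ju'}$ coming from $(AA^{-1})_{uj}=0$ for $j\neq u$, one computes $\phi_\mathcal{I}(\pi_\mathcal{I}(E^*_j))=0$ for every $j\in\mathcal{E}'$, $\phi_\mathcal{I}(E_i)=I_{iu}\geq 0$ for $i\in\mathcal{I}\setminus u$ (equal to $1$ precisely when $i=u'$), and $\phi_\mathcal{I}(\pi_\mathcal{I}(E^*_u))=-1$. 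When $u'\notin\mathcal{I}$ this naive functional fails (it becomes strictly negative on every $\pi_\mathcal{I}(E^*_j)$), and I would instead seek a functional of the form $(y,E^*_u-cE^*_w)$ for some $w\in\mathcal{I}\setminus u$ (which exists since $|\mathcal{I}|\geq 2$) and a positive scalar $c$; a direct computation shows that the three separation conditions reduce to the feasibility of
\[
\max_{j\in\mathcal{E}'}\frac{(A^{-1})_{ju}}{(A^{-1})_{jw}}\;\leq\;c\;<\;\frac{(A^{-1})_{uu}}{(A^{-1})_{uw}}.
\]

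The main obstacle is proving that this double inequality admits a solution, equivalently the non-containment $\pi_\mathcal{I}(E^*_u)\notin\mathbb{R}_{\geq 0}\langle\Psi'_\mathcal{I}\rangle$ in the case $u'\notin\mathcal{I}$. My approach is by contradiction: assume a nonnegative decomposition $\pi_\mathcal{I}(E^*_u)=\sum_{j\in\mathcal{E}'}c_j\pi_\mathcal{I}(E^*_j)+\sum_{i\in\mathcal{I}\setminus u}d_iE_i$ and exploit the identity $A_{uu}E^*_u=E_u+E^*_{u'}$ (from $[E^*_v]_v\cdot A=[E_v]_v$) to rewrite $E_u=A_{uu}\pi_\mathcal{I}(E^*_u)-\pi_\mathcal{I}(E^*_{u'})$. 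Multiplying the $E_i$-component constraints $\sum_jc_j(A^{-1})_{ji}\leq(A^{-1})_{ui}$ by $A_{uu}$ and invoking $A_{uu}(A^{-1})_{ui}=(A^{-1})_{u'i}$ (again from $AA^{-1}=\mathrm{Id}$) yields $A_{uu}\sum_{j\neq u'}c_j(A^{-1})_{ji}\leq(1-A_{uu}c_{u'})(A^{-1})_{u'i}$, which in the sub-case $u'\in\mathcal{E}'$ rules out $A_{uu}c_{u'}\geq 1$ via positivity of $(A^{-1})_{u'i}$; a direct comparison of $E_u$-components then rules out $A_{uu}c_{u'}<1$ as well, yielding the contradiction. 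In the sub-case $u'\notin\mathcal{E}'$ (so $u'$ remains a node of $\mathcal{T}\setminus u$ and $\pi_\mathcal{I}(E^*_{u'})$ is not directly an element of $\Psi'_\mathcal{I}$), the analogous coefficient tracking is more delicate and requires the tree-theoretic structure of $A^{-1}$ recalled in the proof of Lemma \ref{Lm-5}(\ref{Lm-5iii}), possibly via induction on the distance from $u$ to $\mathcal{I}$ along the tree.
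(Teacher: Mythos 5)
Your overall strategy --- a full-dimensional open cone squeezed against the ray $\mathbb{R}_{>0}E^*_u$ and cut out by finitely many separating linear functionals, with a case split on whether $u'\in\mathcal{I}$ --- is exactly the paper's (the paper realizes it via the shrinking cones $\mathfrak{c}_{\varepsilon}=\mathbb{R}_{\geq0}\langle \varepsilon E_{v}+(1-\varepsilon)E^{*}_{u},E^{*}_{u}\rangle_{v\neq u}\cap \textnormal{int}\,(\mathcal{S}'_{\mathbb{R}})$, which is the dual picture of your half-spaces). Your case $u'\in\mathcal{I}$ is complete and coincides with the paper's: the kernel of $(\,\cdot\,,E_u)$ is precisely the separating hyperplane $\mathbb{R}\langle \pi_{\mathcal{I}}(E^{*}_{u'}),E_{v}:v\in\mathcal{I}\setminus\{u,u'\}\rangle$ the paper exhibits. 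Your exclusion of $\mathcal{I}=\{u\}$ is also a correct reading: there $\mathbb{R}_{\geq0}\langle\Psi'_{\{u\}}\rangle=\mathbb{R}_{\geq0}E_u=\pi_{\{u\}}(\mathcal{S}'_{\mathbb{R}})$, so the statement fails, and the lemma is only used for the terms of (\ref{Eq-20}) with $|\mathcal{I}|>1$.

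The gap is in the case $u'\notin\mathcal{I}$. You correctly reduce everything to the positivity of the minors $(A^{-1})_{uu}(A^{-1})_{jw}-(A^{-1})_{ju}(A^{-1})_{uw}$ for $j\in\mathcal{E}'$, $w\in\mathcal{I}\setminus u$, but you never prove it: the contradiction argument is left unfinished (``possibly via induction''), and its sub-case split on $u'\in\mathcal{E}'$ versus $u'\notin\mathcal{E}'$ is moot, because the paper \emph{defines} $\mathcal{E}'=u'\cup\mathcal{E}\setminus u$, so $u'\in\mathcal{E}'$ always and $\pi_{\mathcal{I}}(E^{*}_{u'})$ is always a generator of $\mathbb{R}_{\geq0}\langle\Psi'_{\mathcal{I}}\rangle$; even in that sub-case the step ``a direct comparison of $E_u$-components rules out $A_{uu}c_{u'}<1$'' is asserted without justification. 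The missing input, which is how the paper closes this case, is the splice relation of Lemma \ref{Lm-5}(\ref{Lm-5iii}) (cf. \cite[Section 21]{EN}): writing $E^{*}_{v}=\tfrac{(A^{-1})_{vu}}{(A^{-1})_{uu}}E^{*}_{u}+y_{v}$ with $y_{v}\in V_{\mathcal{V}\setminus u}$, the vector $y_{v}$ is the anti-dual of $v$ in the subtree $\mathcal{T}\setminus u$, hence has strictly positive coordinates $(y_{v})_{i}=(A^{-1})_{vi}-\tfrac{(A^{-1})_{vu}(A^{-1})_{ui}}{(A^{-1})_{uu}}$ --- exactly your minor inequality. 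Consequently every $\pi_{\mathcal{I}}(E^{*}_{v})$, $v\neq u$, equals $a\,\pi_{\mathcal{I}}(E^{*}_{u})+\sum_{i\in\mathcal{I}\setminus u}b_{i}E_{i}$ with $a,b_{i}>0$, so $\mathbb{R}_{\geq0}\langle\Psi'_{\mathcal{I}}\rangle$ sits in $\{b_{i}\geq\mu_{i}a\ \forall i\}$ for suitable $\mu_{i}>0$, while points close enough to the ray of $E^{*}_{u}$ project into $\{a>0,\ b_{i}<\mu_{i}a\}$. (Alternatively, your inequality follows from the multiplicativity of $(A^{-1})_{vw}$ along paths of the tree together with the positive principal minor $(A^{-1})_{uu}(A^{-1})_{u'u'}-(A^{-1})_{uu'}^{2}>0$, using that every path to the end-vertex $u$ passes through $u'$.) Without one of these inputs your case $u'\notin\mathcal{I}$ is not a proof.
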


\begin{proof}
Let $\mathfrak{c}_{\varepsilon} = \mathbb{R}_{\geq0}\langle \varepsilon E_{v} + (1-\varepsilon)E^{*}_{u}, E^{*}_{u}\rangle_{v\neq u} \cap \textnormal{int}\, (\mathcal{S}'_{\mathbb{R}})$.  
 We will show that there exists $\varepsilon>0$ such that $\mathfrak{c}=\mathfrak{c}_{\varepsilon}$ satisfies the desired relations. We distinguish two cases. If $u'\in \mathcal{I}$ then the hyperplane $\mathbb{R}\langle \pi_{\mathcal{I}}(E^{*}_{u'}), E_{v} : v\in \mathcal{I}\setminus \{u,u'\}\rangle $ separates $\mathbb{R}_{\geq0}\langle\Psi'_{\mathcal{I}}\rangle$ and $\pi_{\mathcal{I}}(\textnormal{int}\, (\mathcal{S}'_{\mathbb{R}}))$, hence we can choose any $\varepsilon>0$. If $u'\notin \mathcal{I}$ then we set $E'_{v}=E_{v}$ for $v\neq u,u'$ and $E'_{u'}=\pi_{\mathcal{U}}(E^{*}_{u})$, where $\mathcal{U}=\mathcal{V}\setminus u'$. Then we have relation $[\pi_{\mathcal{U}}(E^{*}_{v})]_{v\neq u} \cdot A_{\mathcal{T}\setminus u} = [E'_{v}]_{v\neq u}$, where $A_{\mathcal{T}\setminus u}$ is the positive definite matrix associated with the tree $\mathcal{T}\setminus u $. In particular, any $\pi_{\mathcal{I}}(E^{*}_{v})$ has positive coefficients in basis $\{\pi_{\mathcal{I}}(E^{*}_{u}),E_{v} \}_{v\in \mathcal{I}\setminus u}
$. Thus, we can find $\varepsilon=\varepsilon_{\mathcal{I}}>0$ such that  
$
\mathfrak{c}_{\varepsilon} \cap \mathbb{R}_{\geq0} \langle \pi_{\mathcal{I}}(E^{*}_{v}), E_{v} : v\neq u,\, i\in \mathcal{I}\setminus u\rangle = \emptyset.
$
Note that $\Psi'_{\mathcal{I}} \subset \{ \pi_{\mathcal{I}}(E^{*}_{v}), E_{i} : v\neq u, i\in \mathcal{I}\setminus u\} $, hence $\mathfrak{c}_{\varepsilon} \cap \mathbb{R}_{\geq 0}\langle \Psi'_{\mathcal{I}}\rangle = \emptyset$ holds too. Therefore, $\mathfrak{c} = \mathfrak{c}_{\varepsilon}$ for $\varepsilon = \min\{ \varepsilon_{\mathcal{I}} : \mathcal{I}\not\ni u'\}$ satisfies the requirements.
\end{proof}

First, we project along $E_{u}$ for $x \in L'$ in a suitable 
affine subcone of $\mathfrak{c}$. Note that $\{E^{*}_{v}, E_{u}\}_{v\neq u'}$ is a basis of $L'$, because $E^{*}_{u'} = A_{uu}E^{*}_{u}-E_{u}$.
Thus, Lemma \ref{Lm-Pr} gives
\begin{gather}
\Coeff \Bigg(  \textnormal{T} \Bigg[ 
\frac{  \mathbf{t}_{\mathcal{I}}^{E_{u}} \sum_{k=0}^{A_{uu}-1} h_{u}^{k} \mathbf{t}_{\mathcal{I}}^{kE^{*}_{u}}  }{ \big( 1 - 
\mathbf{t}_{\mathcal{I}}^{E_{u}} \big) \big( 1 - h_{u'} \mathbf{t}_{\mathcal{I}}^{E^{*}_{u'}} \big) } \varphi_{\mathcal{I}} \Bigg],   
\mathbf{t}_{\mathcal{I}}^{x}
\Bigg)
=
\Coeff \Bigg(  \textnormal{T} \Bigg[ 
\frac{  \sum_{k=0}^{A_{uu}-1} h_{u}^{k}\mathbf{t}_{\mathcal{I}'}^{kE^{*}_{u}}  }{  1 - h_{u'}\mathbf{t}_{\mathcal{I}'}^{E^{*}_{u'}}  } 
\pi_{\mathcal{I}'}(\varphi_{\mathcal{I}}) \Bigg],   
\mathbf{t}_{\mathcal{I}'}^{x}
\Bigg)
\notag\\
\label{Eq-35}
=\Coeff \Bigg(  \textnormal{T} \Bigg[ 
\frac{  \sum_{k=0}^{A_{uu}-1} h_{u}^{k}\mathbf{t}_{\mathcal{I}'}^{kE^{*}_{u}}  }{  1 - h_{u}^{A_{uu}}\mathbf{t}_{\mathcal{I}'}^{A_{uu}E^{*}_{u}}  } 
\pi_{\mathcal{I}'}(\varphi_{\mathcal{I}}) \Bigg],   
\mathbf{t}_{\mathcal{I}'}^{x}
\Bigg)
\\
=
\Coeff \Bigg(  \textnormal{T} \Bigg[ 
\frac{\pi_{\mathcal{I}'}(\varphi_{\mathcal{I}})    }{  1 - h_{u}\mathbf{t}_{\mathcal{I}'}^{E^{*}_{u}}  }  \Bigg],   
\mathbf{t}_{\mathcal{I}'}^{x}
\Bigg)
=
\Coeff \left(  \textnormal{T}\left[ f^{\mathcal{T}}_{H}(\mathbf{t}_{\mathcal{I'}}) \prod_{i\in \mathcal{I'}}\frac{\mathbf{t}_{\mathcal{I'}}^{E_{i}}}{1-\mathbf{t}_{\mathcal{I'}}^{E_{i}}}\right], \mathbf{t}_{\mathcal{I'}}^{x} \right),
\notag
\end{gather}
since  $\pi_{\mathcal{I}}(\mathfrak{c}) \subset \mathbb{R}_{\geq0}\langle E_{u}, 
\Psi'_{\mathcal{I}}\rangle \setminus \mathbb{R}_{\geq0}\langle \Psi'_{\mathcal{I}}\rangle$.

Moreover, for $x \in  L'$ in a suitable affine subcone of $\mathfrak{c}$  we have 
\begin{equation}\label{Eq-37}
\Coeff\left( 
\textnormal{T} \left[ \frac{  \sum_{k=0}^{A_{uu}-1} h_{u}^{k}\mathbf{t}_{\mathcal{I}}^{kE^{*}_{u}} }{ 1 -
h_{u'}\mathbf{t}_{\mathcal{I}}^{E^{*}_{u'}} } \varphi_{\mathcal{I}} \right],\, 
\mathbf{t}_{\mathcal{I}}^{x} 
\right) =0,
\end{equation}
since $\pi_{\mathcal{I}}(\mathfrak{c}) \cap \mathbb{R}_{\geq0}\langle \Psi'_{\mathcal{I}}\rangle = \emptyset$.
Therefore, (\ref{Eq-35}) and (\ref{Eq-37}) lead to cancellations and vanishings in (\ref{Eq-20}) for any $x\in  L'$ in a suitable affine subcone of $\mathfrak{c}$, thus
\begin{gather*}
C_{H}^{\mathcal{T}}(x) = 
-
\sum_{\substack{u\in \mathcal{I} \subseteq \mathcal{V}
\\ |\mathcal{I}|>1}} (-1)^{|\mathcal{I}|-1}
\Coeff\left( 
\textnormal{T} \left[
\frac{ f'_{H}(\mathbf{t}_{\mathcal{I}})
 }{  1 - h_{u}\mathbf{t}_{\mathcal{I}}^{E^{*}_{u}}   } 
\prod_{i \in \mathcal{I}'} \frac{ \mathbf{t}_{\mathcal{I}}^{E_{i}}  }{ 1 - \mathbf{t}_{\mathcal{I}}^{E_{i}} }
\right],\, 
\mathbf{t}_{\mathcal{I}}^{x} \right)
\\
+
\Coeff \left(  \textnormal{T}\left[ f^{\mathcal{T}}_{H}(t_{u}) \prod_{i\in \mathcal{I}}\frac{t_{u}}{1-t_{u}}\right], t_{u}^{x} \right),
\end{gather*}
where $f'_{H}(\mathbf{t}_{\mathcal{I}}) = \big( 1 - h_{u'} \mathbf{t}_{\mathcal{I}}^{E^{*}_{u'}} \big)^{-1} \prod_{v\in \mathcal{V}'} \big( 1 - h_{v} \mathbf{t}_{\mathcal{I}}^{E^{*}_{v}} \big)^{\delta_{v}-2}
=
 \prod_{v\in \mathcal{V}'} \big( 1 - h_{v} \mathbf{t}_{\mathcal{I}}^{E^{*}_{v}} \big)^{\delta_{v,\mathcal{V}'}-2}.
$ 

The projection along $E^{*}_{u}$ is a bit more involved, because the presence of the group element $h_{u}$. 
Denote $\pi^{(u)}_{\mathcal{I}'}: V_{\mathcal{I}} \to V_{\mathcal{I}'}$ the projection along 
$\pi_{\mathcal{I}}(E^{*}_{u})$ and use short notation $\pi^{(u)} = \pi^{(u)}_{\mathcal{V}'}$.
Note that we have commutation relation $\pi^{(u)}_{\mathcal{I}'} \pi_{\mathcal{I}} = \pi_{\mathcal{I}'} \pi^{(u)}$, moreover  
$\pi^{(u)} (E_{i}) = E_{i}$ for $i\neq u$.  
We do not have homomorphism from $H=H^{\mathcal{T}}=L'/L$ to $\widetilde{H}=H^{\mathcal{T}\setminus u}=\mathbb{Z}\langle\pi^{(u)}(E^{*}_{v})\rangle_{v\in \mathcal{V}'}/\mathbb{Z}\langle E_{v}\rangle_{v\in \mathcal{V}'}$, therefore we introduce an intermediate group $G:=L'/\mathbb{Z}\langle E_{v}\rangle_{v\in \mathcal{V}'}$, which admits natural homomorphisms $\phi:G \to H$  with kernel $\mathbb{Z}\langle E_{u} \rangle$ and $\widetilde{\phi}:G \to \widetilde{H}$ induced by projection along $E^{*}_{u}$. 
We denote by $\widehat{x}$  and  $\widehat{h}_{v}$  the classes of $x$ and $E^{*}_{v}$ in $G$, respectively. Moreover, there is a subgroup $H_{u}=\ker (\pi_{u}:G \to \pi_{u}(L'))$, which can be also considered as subgroup of $H$ and $\widetilde{H}$ via homomorphisms $\phi$ and $\widetilde{\phi}$. Then
\begin{equation}\label{Eq-46}
\textnormal{Coeff}\left( \textnormal{T} \left[
\frac{
\prod_{v\in \mathcal{V}'} (1 - \widehat{h}_{v} \mathbf{t}_{\mathcal{I}}^{E^{*}_{v}})^{\delta_{v,\mathcal{V}'}-2}
}{1 - \widehat{h}_{u} \mathbf{t}_{\mathcal{I}}^{E^{*}_{u}} }
\prod_{i\in \mathcal{I}'} \frac{ \mathbf{t}_{\mathcal{I}}^{E_{i}}
}{ 1 - \mathbf{t}_{\mathcal{I}}^{E_{i}}
}
\right], \mathbf{t}_{\mathcal{I}}^{x}
\right)
=
q_{\mathcal{I}} \cdot \widehat{x} + q'_{\mathcal{I}} \cdot \widehat{x}
\end{equation}
with $q_{\mathcal{I}}\in \mathbb{Z}$ and $q'_{\mathcal{I}} \in \mathbb{Z}[H_{u}] \setminus \mathbb{Z} \widehat{0}$, where $\widehat{0}$ is the identity element of $G$. Applying homomorphism $\phi$ to (\ref{Eq-46}) we get  
$$
\Coeff\left( 
\textnormal{T} \left[
\frac{ f'_{H}(\mathbf{t}_{\mathcal{I}})
 }{  1 - h_{u}\mathbf{t}_{\mathcal{I}}^{E^{*}_{u}}   } 
\prod_{i \in \mathcal{I}'} \frac{ \mathbf{t}_{\mathcal{I}}^{E_{i}}  }{ 1 - \mathbf{t}_{\mathcal{I}}^{E_{i}} }
\right],\, 
\mathbf{t}_{\mathcal{I}}^{x} \right)
=
q_{\mathcal{I}} \cdot [x] + q'_{\mathcal{I}} \cdot [x],
$$
thus by extracting coefficient of $[x]\in H$ from $C^{\mathcal{T}}_{H}$ we get
\begin{equation}\label{Eq-47}
C^{\mathcal{T}}(x) = \sum_{\substack{ u\in \mathcal{I} \subseteq \mathcal{V}\\ |\mathcal{I}|>1 }}(-1)^{|\mathcal{I}|} q_{\mathcal{I}} 
+
\frac{1}{|H|} \sum_{\rho \in \widehat{H}} \rho^{-1}\left( [x]^{-1} 
\Coeff \left(  \textnormal{T}\left[ f^{\mathcal{T}}_{H}(t_{u}) \prod_{i\in \mathcal{I}}\frac{t_{u}}{1-t_{u}}\right], t_{u}^{x} \right) \right). 
\end{equation}
Moreover, applying homomorphism $\widetilde{\phi}$ to (\ref{Eq-46}) one has
\begin{gather*}
q_{\mathcal{I}} \cdot [\pi^{(u)}(x)] + q'_{\mathcal{I}} \cdot [\pi^{(u)}(x)]
=
\Coeff\left( 
\textnormal{T} \left[
\frac{ f'_{\widetilde{H}}(\mathbf{t}_{\mathcal{I}})
 }{  1 - \mathbf{t}_{\mathcal{I}}^{E^{*}_{u}}   } 
\prod_{i \in \mathcal{I}'} \frac{ \mathbf{t}_{\mathcal{I}}^{E_{i}}  }{ 1 - \mathbf{t}_{\mathcal{I}}^{E_{i}} }
\right],\, 
\mathbf{t}_{\mathcal{I}}^{x} \right)
\\
=
\Coeff\left( 
\textnormal{T} \left[
 f^{\mathcal{T}\setminus u}_{\widetilde{H}}(\mathbf{t}_{\mathcal{I'}}) 
\prod_{i \in \mathcal{I}'} \frac{ \mathbf{t}_{\mathcal{I}'}^{E_{i}}  }{ 1 - \mathbf{t}_{\mathcal{I}'}^{E_{i}} }
\right],\, 
\mathbf{t}_{\mathcal{I}'}^{\pi^{(u)}(x)} \right),
\end{gather*}
where the second identity is deduced from Lemma \ref{Lm-Pr}, projecting along $E^{*}_{u}$. Hence, summing with respect to $\mathcal{I}'=\mathcal{I}\setminus u$ and extracting coefficient of $[\pi^{(u)}(x)]$ we get
\begin{equation}\label{Eq-48}
C^{\mathcal{T}\setminus u}(\pi^{(u)}(x))
=
\sum_{u\in \mathcal{I} \subseteq \mathcal{V}}(-1)^{|\mathcal{I}|} q_{\mathcal{I}}.
\end{equation}
Thus, we arrive at recursive relation
\begin{equation}\label{Eq-41b}
C^{\mathcal{T}}(x) = C^{\mathcal{T}\setminus u} (\pi^{(u)}(x)) 
+
\frac{1}{|H|} \sum_{\rho \in \widehat{H}} \rho^{-1}\left( [x]^{-1} 
\Coeff \left(  \textnormal{T}\left[ f^{\mathcal{T}}_{H}(t_{u}) \prod_{i\in \mathcal{I}}\frac{t_{u}}{1-t_{u}}\right], t_{u}^{x} \right) \right)  
\end{equation}
for any $x\in L'$ in a suitable affine subcone of $\mathfrak{c}$.

Denote $\mathcal{L}^{u}_{H}=\sum_{h\in H}\mathcal{L}^{u}_{h} \cdot h$ the quasipolynomial such that $\mathcal{L}^{u}_{H}(w) = 
\Coeff \big( \textnormal{T}[f^{\mathcal{T}}_{H}(t_{u})\frac{t_{u}}{1-t_{u}}], t^{w}_{u} \big)$ for $w \gg 0$. 
Then by relations (\ref{Eq-41b}) and (\ref{Eq-39}) we get the following theorem.

\begin{thm}\label{Thm-3}
For any $u\in \mathcal{E}$ end-vertex of $\mathcal{T}$ one has the recursion of quasipolynomials
\begin{equation*}
\mathcal{L}_{[x]}^{\mathcal{T}}(x) = \mathcal{L}^{\mathcal{T}\setminus u}_{[\pi^{(u)}(x)]} (\pi^{(u)}(x)) + \mathcal{L}^{u}_{[x]}(x_{u})
\end{equation*}
for all $x\in L'$.
\end{thm}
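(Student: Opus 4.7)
The plan is to deduce the theorem from the counting-function recursion (\ref{Eq-41b}) derived in Section \ref{ss:recurquasi}, combined with the uniqueness result of Theorem \ref{Thm-A}. All the analytic content — the partial fraction decomposition, Projection Lemma \ref{Lm-Pr}, and the construction of the cone $\mathfrak{c}$ via Lemma \ref{Lm-7} — has already produced (\ref{Eq-41b}); only its promotion from a pointwise identity of counting functions on an affine subcone to an identity between quasipolynomials on all of $L'$ remains.

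First I would identify the second summand on the right-hand side of (\ref{Eq-41b}) as $\mathcal{L}^u_{[x]}(x_u)$. By construction $\mathcal{L}^u_H$ is the quasipolynomial extension of $w \mapsto \Coeff(\textnormal{T}[f^{\mathcal{T}}_H(t_u)\tfrac{t_u}{1-t_u}],\, t_u^w)$, so applying the Fourier formula (\ref{Eq-50}) shows that its $[x]$-equivariant part coincides with the second summand of (\ref{Eq-41b}) as soon as $x_u$ is large enough. Consequently (\ref{Eq-41b}) rewrites as
$$C^{\mathcal{T}}(x) \;=\; C^{\mathcal{T}\setminus u}(\pi^{(u)}(x)) \;+\; \mathcal{L}^u_{[x]}(x_u)$$
for every $x \in L'$ lying in a suitable affine subcone of $\mathfrak{c}$.

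Next I would invoke Theorem \ref{Thm-A} for both $\mathcal{T}$ and $\mathcal{T}\setminus u$. For $x$ inside $\sum_{v\in \mathcal{V}}(\delta_v-2)E_v^*+\textnormal{int}(\mathcal{S}')$ one has $C^{\mathcal{T}}(x)=\mathcal{L}^{\mathcal{T}}_{[x]}(x)$, and whenever $\pi^{(u)}(x)$ lies in the analogous subcone of the Lipman cone of $\mathcal{T}\setminus u$ one has $C^{\mathcal{T}\setminus u}(\pi^{(u)}(x))=\mathcal{L}^{\mathcal{T}\setminus u}_{[\pi^{(u)}(x)]}(\pi^{(u)}(x))$. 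Intersecting these three constraints — membership in $\mathfrak{c}$, in the $\mathcal{T}$-subcone, and in the image-side subcone for $\mathcal{T}\setminus u$ — carves out a full-dimensional open subcone $\mathcal{K}\subseteq \mathcal{S}'_{\mathbb{R}}$ on which the asserted identity of quasipolynomials holds pointwise; note that such $x$ exist because $\pi^{(u)}$ is surjective onto $L'_{\mathcal{T}\setminus u}$ and one may shift by any sufficiently deep element of $\mathcal{S}'$.

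Finally, both sides of the desired recursion are quasipolynomials on $L'$: the term $\mathcal{L}^{\mathcal{T}\setminus u}_{[\pi^{(u)}(x)]}(\pi^{(u)}(x))$ is the pullback of a quasipolynomial along the linear map $\pi^{(u)}$, and $\mathcal{L}^u_{[x]}(x_u)$ depends quasipolynomially on the single coordinate $x_u$. Since two quasipolynomials on $L'$ that coincide on the lattice points of a full-dimensional affine cone must be equal, the identity extends from $\mathcal{K}\cap L'$ to all of $L'$. The genuine obstacle is therefore not this last uniqueness step but the correct choice of $\mathfrak{c}$ that simultaneously produces the cancellation (\ref{Eq-35}) and the vanishing (\ref{Eq-37}) in the partial fraction expansion; granting the output of Section \ref{ss:recurquasi}, the theorem follows by the above lifting argument.
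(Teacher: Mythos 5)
Your proposal is correct and follows essentially the same route as the paper: the paper's proof of Theorem \ref{Thm-3} consists precisely of the derivation of (\ref{Eq-41b}) in Section \ref{ss:recurquasi} followed by the (largely implicit) identification of each term with its quasipolynomial via (\ref{Eq-39}) and extension by uniqueness. Your write-up merely makes explicit the final lifting step that the paper compresses into one sentence, and the details you supply (Fourier extraction of the $[x]$-part, intersecting the three affine subcones, uniqueness of quasipolynomials agreeing on a full-dimensional cone) are the intended ones.
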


\begin{remark}
By Lemma \ref{Lm-5}(\ref{Lm-5ii}), the one-variable function $\prod_{v\in \mathcal{V}} \big( 1 - h_{u}t_{u}^{E^{*}_{v}} \big)^{\delta_{v,\mathcal{V}}-2}$ 
simplifies to a fraction of form $P(t_{u}) \big( 1 - h_{u}t_{u}^{E^{*}_{u}} \big)^{-2}$, where $P(t_{u})$ is a Laurent polynomial defined in (\ref{Eq-58}). 
Thus, $\mathcal{L}^{u}_{H}$ is the quasipolynomial  associated with the coefficient function of the Taylor expansion of 
$P(t_{u}) t_{u} \big( 1 - h_{u}t_{u}^{E^{*}_{u}} \big)^{-2} \big( 1-t_{u} \big)^{-1}$, hence $\mathcal{L}^{u}_{H}$ has degree two by Remark \ref{Rk-3}. 
Moreover, by Theorem \ref{Thm-3} we get that $\mathcal{L}^{\mathcal{T}}_{H}$ is also a quasipolynomial of degree two. 
\end{remark}

\subsection{Recursion for the periodic constants}\label{ss:pcrec}
Let $x\in L'$ such that $[x]=h$ for a fixed $h\in H$. Recall that we have defined two distinguished representatives $r_{h}$ and $s_{h}$ in Section \ref{ss:distreps}.
\subsubsection{$r_{h}$-normalization}
We represent $x=\overline{x}+r_h$ for some $\overline{x}\in L$. 
Then relations (\ref{Eq-39}) and Theorem \ref{Thm-3} imply that
$$
\mathcal{L}^{\mathcal{T}}_h(\overline x)=\mathcal{L}^{\mathcal{T}}_h(x)=
\mathcal{L}^{\mathcal{T}\setminus u}_{[\pi^{(u)}(\overline x+r_h)]}(\pi^{(u)}(\overline x+r_h))+\mathcal{L}^u_h((\overline x+r_h)_u). 
$$
One can happen that $[\pi^{(u)}(\overline x+r_h)]$ varies in $L'_{\mathcal{T}\setminus u} /L_{\mathcal{T}\setminus u}$, hence 
the formula chooses different quasipolynomials on $\mathcal{T}\setminus u$. However, from periodic constant point of view 
it is enough to look at a sparse sublattice $\overline L$ of $L$ in such a way that $[\pi^{(u)}(\overline x+r_h)]$ is constant 
and equals with $[\pi^{(u)} (r_h)]$.
Therefore, by (\ref{eq:PCDEF}) we get
$$
\mathrm{pc}_{h}(Z^{\mathcal{T}})  
= 
\mathcal{L}^{\mathcal{T}}_{h}(0)  =  \mathcal{L}^{\mathcal{T}\setminus u}_{[\pi^{(u)}(r_{h})]}(\pi^{(u)}(r_{h})) + 
\mathrm{pc}(Z_{h}^u).
$$
The problem is that in general $\widetilde{r}_{h}:= \pi^{(u)}(r_{h})$ is not in $\sum_{v\in \mathcal{V}'}[0,1)E_{v}$, hence 
$\mathcal{L}^{\mathcal{T}\setminus u}_{[\widetilde{r}_{h}]}(\widetilde{r}_{h})$ is not equal with 
$\mathrm{pc}_{[\widetilde{r}_{h}]}(Z^{\mathcal{T}\setminus u})$. 
This behaviour of the recurrence on the periodic constant level is in accordance with the  Braun--N\'emethi formula (Theorem \ref{thm:BN}), 
since for $\widetilde{\sigma} = r_{h} * \sigma_{can}$ we have $\widetilde{\sigma}_{u} = \widetilde{r}_{h} * \sigma_{can,u}$.
In special cases (e.g. $h=0$ or $r_{h} = s_{h}$) we get purely a recursion of periodic constants.

\subsubsection{$s_{h}$-normalization}
In this case, we write $x=\overline{x} + s_{h}$ for some $\overline{x}\in L$. One can also modify the definition of the periodic constant associated with $Z$ and introduce {\em $s_{h}$-normalized periodic constant} by 
$$
\overline{\mathrm{pc}}_{h}(Z):=\mathcal{L}_{h}(s_{h}),
$$
where $\mathcal{L}_{h}$ is the quasipolynomial on $h+L$ associated with $Z$. Then by the same argument as in the above section we get
\begin{equation}\label{Eq-43}
\mathcal{L}_{h}^{\mathcal{T}}(s_{h}) = \mathcal{L}_{[\pi^{(u)}(s_{h})]}^{\mathcal{T}\setminus u}(\pi^{(u)}(s_{h})) + \mathcal{L}_{h}^{u}((s_{h})_{u}).
\end{equation}
However, the next lemma shows that $s_{h}$  is projected under $\pi^{(u)}$ into a representative of the same type.

\begin{lemma}
$\pi^{(u)}(s_{h}) = s_{[\pi^{(u)}(s_{h})]}$ in $L'_{\mathcal{T}\setminus u}$.
\end{lemma}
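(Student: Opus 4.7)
The plan is to verify two things: (a) $\pi^{(u)}(s_h)$ lies in the Lipman cone $\mathcal{S}'_{\mathcal{T}\setminus u}$ of $\mathcal{T}\setminus u$, and (b) $\pi^{(u)}(s_h)$ is minimal in its class $[\pi^{(u)}(s_h)]\in L'_{\mathcal{T}\setminus u}/L_{\mathcal{T}\setminus u}$ among elements of $\mathcal{S}'_{\mathcal{T}\setminus u}$. Together these yield the claim by the characterization of $s_{[\pi^{(u)}(s_h)]}$ recalled in \ref{ss:distreps}.

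For (a), write $s_h=\sum_{w\in\mathcal{V}} a_w E^*_w$ with $a_w\in \mathbb{Q}$, so that $\pi^{(u)}(s_h)=\sum_{w\neq u} a_w E^*_w$ in $L'_{\mathcal{T}\setminus u}$. A direct computation using $(E^*_w,E_v)=-\delta_{vw}$ in both lattices shows that for every $v\in \mathcal{V}\setminus u$
$$
(\pi^{(u)}(s_h),E_v)_{\mathcal{T}\setminus u}=-a_v=(s_h,E_v)_{\mathcal{T}}\leq 0,
$$
where the last inequality uses $s_h\in \mathcal{S}'_{\mathcal{T}}$. Hence $\pi^{(u)}(s_h)\in \mathcal{S}'_{\mathcal{T}\setminus u}$.

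For (b), argue by contradiction: assume there exists $l'\in \mathcal{S}'_{\mathcal{T}\setminus u}$ with $[l']=[\pi^{(u)}(s_h)]$ and $l'\lneq \pi^{(u)}(s_h)$, i.e. $\pi^{(u)}(s_h)-l'=\sum_{v\neq u}\beta_v E_v$ with $\beta_v\in \mathbb{Z}_{\geq 0}$ not all zero. Define the lift $\tilde{s}:=s_h-\sum_{v\neq u}\beta_v E_v\in L'$ (viewing the $E_v$'s, $v\neq u$, in $L\subset L'$ of $\mathcal{T}$). Then $[\tilde s]=[s_h]=h$ and $\tilde s\lneq s_h$ by construction. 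The core of the argument is to verify $\tilde s \in \mathcal{S}'_{\mathcal{T}}$, which contradicts the minimality of $s_h$. For $v\neq u$, since the intersection form on $\mathcal{T}\setminus u$ agrees with that of $\mathcal{T}$ on the sublattice generated by $\{E_w\}_{w\neq u}$, I can combine the identity from (a) with the definition of $l'$ to get
$$
(\tilde s,E_v)_{\mathcal{T}}=(\pi^{(u)}(s_h),E_v)_{\mathcal{T}\setminus u}-(\pi^{(u)}(s_h)-l',E_v)_{\mathcal{T}\setminus u}=(l',E_v)_{\mathcal{T}\setminus u}\leq 0.
$$

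The only genuine check is at $v=u$: this is where the fact $u\in \mathcal{E}$ with unique neighbor $u'$ enters, giving $(E_w,E_u)_{\mathcal{T}}=\delta_{w,u'}$ for $w\neq u$, so that
$$
(\tilde s,E_u)_{\mathcal{T}}=(s_h,E_u)_{\mathcal{T}}-\beta_{u'}\leq 0,
$$
both terms being $\leq 0$. This is the main obstacle in the sense that it is the one place where the hypothesis that $u$ is an end-vertex and the nonnegativity $\beta_{u'}\geq 0$ come together; the rest of the argument is formal manipulation of the projection $\pi^{(u)}$ and the two intersection forms.
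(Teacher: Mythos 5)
Your proof is correct and follows essentially the same route as the paper's: both lift the difference $\pi^{(u)}(s_h)-s_{[\pi^{(u)}(s_h)]}=\sum_{v\neq u}\beta_vE_v\geq 0$ back to $L'$, check that $s_h-\sum_{v\neq u}\beta_vE_v$ stays in $\mathcal{S}'_{\mathcal{T}}$ (the only nontrivial check being at $v=u$, where $(E_{u'},E_u)=1$ and $\beta_{u'}\geq 0$ give the sign), and invoke minimality of $s_h$; your contradiction phrasing versus the paper's direct conclusion $l=0$ is only cosmetic.
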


\begin{proof}
 Denote $[\pi^{(u)}(s_{h})]$ by $\overline{h}$. 
 By the definition of $\pi^{(u)}$ one has $\pi^{(u)}(s_{h})\in \mathcal{S}'_{\mathcal{T}\setminus u}$ the Lipman cone in $L'_{\mathcal{T}\setminus u}$, therefore 
 the unique representative associated with $\overline{h}$ can be written as $s_{\overline{h}} = \pi^{(u)}(s_{h})-l$, with $l\in L_{\mathcal{T}\setminus u}\subset L$ and $l\geq0$. We set $s:=s_{h}-l$ and we show that $s\in \mathcal{S}'$, i.e. $(s,E_{v})\leq0$ for all $v$. This would imply that $l=0$ by the minimality of $s_{h}$, hence $\pi^{(u)}(s_{h}) = s_{\overline{h}}$. 

Notice that $s_{\overline{h}}\in \mathcal{S}'_{\mathcal{T}\setminus u}$ is equivalent with $(\pi^{(u)}(s_{h})-l,E_{v})\leq 0$ for all $v\neq u$. Moreover $(s_{h},E_{v})= (\pi^{(u)}(s_{h}),E_{v})$ for all $v\neq u$, hence $(s,E_{v})\leq0$ for all $v\neq u$. On the other hand, $(l,E_{u})\geq0$ since the $E_{u}$-coefficient of $l$ is $0$. Hence 
$
(s,E_{u}) = (s_{h},E_{u}) - (l,E_{u})\leq 0
$
by $s_{h}\in \mathcal{S}'$. 
\end{proof}

Therefore, (\ref{Eq-43}) can be interpreted as a recursion of $s_{h}$-normalized periodic constants
$$
\overline{\mathrm{pc}}_{h}(Z^{\mathcal{T}}) 
= 
\overline{\mathrm{pc}}_{[\pi^{(u)}(s_{h})]}(Z^{\mathcal{T}\setminus u}) 
+ 
\mathrm{pc}(t^{-(s_{h})_{u}}Z_{h}^u),
$$
where $Z_{h}^u$ is the one-variable series $Z_{h}^{\mathcal{T}}|_{t_v=1,v\neq u}$.


\end{document}